\newtheorem{remark}{{\it Remark}}[section]
\title{Global classical solutions to the compressible Euler-Maxwell equations}
\author{Jiang Xu\thanks{Department of Mathematics, Nanjing
University of Aeronautics and Astronautics, Nanjing 211106,
P.R.China ({\tt jiangxu\underline{ }79@yahoo.com.cn}, {\tt
jiangxu\underline{ }79@nuaa.edu.cn}).} }
\begin{document}

\maketitle

\begin{abstract}
In this paper, we consider the compressible Euler-Maxwell equations
arising in semiconductor physics, which take the form of Euler
equations for the conservation laws of mass density and current
density for electrons, coupled to Maxwell's equations for
self-consistent electromagnetic field. We study the global
well-posedness in critical spaces and the limit to zero of some
physical parameters in the scaled Euler-Maxwell equations. More
precisely, using high- and low-frequency decomposition methods, we
first construct uniform (global) classical solutions (around
constant equilibrium) to the Cauchy problem of Euler-Maxwell
equations in Chemin-Lerner's spaces with critical regularity.
Furthermore, based on Aubin-Lions compactness lemma, it is justified
that the (scaled) classical solutions converge globally in time to
the solutions of compressible Euler-Poisson equations in the process
of non-relativistic limit and to that of drift-diffusion equations
under the relaxation limit or the combined non-relativistic and
relaxation limits.
\end{abstract}

\begin{keywords}
Euler-Maxwell equations, classical solutions, Chemin-Lerner's
spaces, non-relativistic limit, relaxation limit
\end{keywords}

\begin{AMS}
35L45, 76N15, 35B25
\end{AMS}

\pagestyle{myheadings} \thispagestyle{plain} \markboth{Jiang
Xu}{Compressible Euler-Maxwell equations}

\section{Introduction and main results}
The increasing demand on semiconductor devices has led to the
necessity of a deep and detailed understanding on the mathematical
theory of various charge-carrier transport models. Of these
important models, the classical hydrodynamic model (also named as
the Euler-Poisson equations), which treats the propagation of
electrons in semiconductor devices as the flow of a compressible
charged fluid in an electric field, has received increasing
attention. For the cases of high electric field and submicronic
devices, the Euler-Poisson equations of fluid dynamical form can
represent a reasonable comprise between physical accuracy and
reduction of computational cost in real applications, the reader is
referred to \cite{MRS} for more explanations. When semiconductor
devices are operated under some high frequency conditions (such as
photoconductive switches, electro-optics, semiconductor lasers and
high-speed computers), magnetic fields are generated by moving
electrons inside devices, then the electrons transport interacts
with the propagating electromagnetic waves. In this case, the
transport process is typically governed by the Euler-Maxwell
equations, which is more accurate than the Euler-Poisson equations,
since the electromagnetic field obeys Maxwell's equations instead of
Poisson equation for the electric field only.

After some appropriate re-scaling, the compressible Euler-Maxwell
equations are written, in nondimensional form,  as
\begin{equation}
\left\{
\begin{array}{l}\partial_{t}n+\nabla\cdot(n\mathbf{u})=0,\\
 \partial_{t}(n\textbf{u})+\nabla\cdot(n\textbf{u}\otimes\textbf{u})+\nabla P(n)=-n(\mathbf{E}+\varepsilon\textbf{u}\times
 \mathbf{B})-\frac{n\textbf{u}}{\tau},\\
\varepsilon\lambda^2\partial_{t}\mathbf{E}-\nabla\times\mathbf{B}=\varepsilon
n\textbf{u},\\
\varepsilon\partial_{t}\mathbf{B}+\nabla\times\mathbf{E}=0,\\
\lambda^2\nabla\cdot\mathbf{E}=\bar{n}-n,\ \ \ \nabla\cdot\mathbf{B}=0,\\
 \end{array} \right.\label{R-E1}
\end{equation}
 for $ (t,x)\in[0,+\infty)\times\mathbf{R}^{N}(N\geq2)$.
Here the unknowns
$n,\textbf{u}=(u_1,u_2,\cdot\cdot\cdot,u_{N})^{\top},\
\mathbf{E}=(E_1,E_2,\cdot\cdot\cdot,E_{N})^{\top},\
\mathbf{B}=(B_1,B_2,\cdot\cdot\cdot,B_{N})^{\top}(\top$ transpose)
denote the electron density, electron
 velocity, electric field and magnetic field, respectively.
The pressure $P(n)$ satisfies the usual $\gamma$-law
\begin{eqnarray}
P(n)=P_{0}n^{\gamma}(\gamma\geq1),\label{R-E2}
\end{eqnarray}
where $P_{0}>0$ is some physical constant.  The system (\ref{R-E1})
is called \emph{isentropic} if $\gamma>1$ and \emph{isothermal} if
$\gamma=1$. $\tau,\lambda>0$ are the (scaled) constants for the
momentum-relaxation time and the Debye length.
$c=(\epsilon_{0}\upsilon_{0})^{-\frac{1}{2}}>0$ is the speed of
light, where $\epsilon_{0}$ and $\upsilon_{0}$ are the vacuum
permittivity and permeability. Setting $\varepsilon=\frac{1}{c}$.
The independent parameters $\tau,\lambda$ and $\varepsilon$ which
arise from nondimensionalization, are assumed to be very small
compared to the reference physical size. The symbols $\nabla$,
$\cdot$, $\times$ and $\otimes$ are the gradient operator, the
scalar products, the vector products and the tensor products of two
vectors, respectively. $\bar{n}>0$ is the doping profile, which
stands for the density of positively charged background ions.

It is not difficult to see that the above Euler-Maxwell equations
consist of a quasi-linear hyperbolic system, the main feature of
which is the finite time blowup of classical solutions even when the
initial data are smooth and small. Hence, the qualitative study and
device simulation of (\ref{R-E1}) are far to be trivial. In this
paper, our main aim is to establish the global well-posedness and
justify some singular limits for the Cauchy problem. For this
purpose, the Euler-Maxwell equations (\ref{R-E1}) are equipped with
the following initial conditions for $n,\textbf{u},\textbf{E}$ and
$\textbf{B}$:
\begin{equation}(n,\textbf{u},\textbf{E},\textbf{B})(x,0)=(n_{0},\textbf{u}_{0},\textbf{E}_{0},\textbf{B}_{0})(x),\
\ x\in\mathbf{R}^{N},\label{R-E3} \end{equation} which satisfies the
compatible conditions
\begin{equation}
\lambda^2\nabla\cdot\mathbf{E}_{0}=\bar{n}-n_{0},\ \ \
\nabla\cdot\mathbf{B}_{0}=0, \ \ x\in\mathbf{R}^{N}.\label{R-E4}
\end{equation}

\subsection{Singular limit analysis} \label{sec:1.1}
It is convenient to state previous works and main results of this
paper, we first introduce some singular limits in the scaled
Euler-Maxwell equations at the formal level, including the
non-relativistic limit, relaxation limit as well as combined
non-relativistic and relaxation limits.

Firstly, we observe the non-relativistic limit $(\textit{i.e.}\
\varepsilon\rightarrow0)$. Let $\tau=1=\lambda$ and
$(n^{\varepsilon}, \textbf{u}^{\varepsilon},
\textbf{E}^{\varepsilon}, \textbf{B}^{\varepsilon})$ be the solution
of the following equations
\begin{equation}
\left\{
\begin{array}{l}\partial_{t}n^{\varepsilon}+\nabla\cdot(n^{\varepsilon}\mathbf{u}^{\varepsilon})=0,\\[0.5mm]
 \partial_{t}(n^{\varepsilon}\textbf{u}^{\varepsilon})+\nabla\cdot(n^{\varepsilon}\textbf{u}^{\varepsilon}\otimes\textbf{u}^{\varepsilon})+\nabla P(n^{\varepsilon})\\ \hspace{30mm}=-n^{\varepsilon}(\mathbf{E}^{\varepsilon}+\varepsilon\textbf{u}^{\varepsilon}\times
 \mathbf{B}^{\varepsilon})-n^{\varepsilon}\textbf{u}^{\varepsilon},\\[0.5mm]
\varepsilon\partial_{t}\mathbf{E}^{\varepsilon}-\nabla\times\mathbf{B}^{\varepsilon}=\varepsilon
n^{\varepsilon}\textbf{u}^{\varepsilon},\\[0.5mm]
\varepsilon\partial_{t}\mathbf{B}^{\varepsilon}+\nabla\times\mathbf{E}^{\varepsilon}=0,\\[0.5mm]
\nabla\cdot\mathbf{E}^{\varepsilon}=\bar{n}-n^{\varepsilon},\ \ \
\nabla\cdot\mathbf{B}^{\varepsilon}=0.
 \end{array} \right.\label{R-E5}
\end{equation}
Formally, we see that the limits
$n^{0},\mathbf{u}^{0},\mathbf{E}^{0}$ of
$n^{\varepsilon},\mathbf{u}^{\varepsilon}, \mathbf{E}^{\varepsilon}$
as $\varepsilon\rightarrow0$ satisfy
\begin{equation}
\left\{
\begin{array}{l}\partial_{t}n^{0}+\nabla\cdot(n^{0}\mathbf{u}^{0})=0,\\
 \partial_{t}(n^{0}\textbf{u}^{0})+\nabla\cdot(n^{0}\textbf{u}^{0}\otimes\textbf{u}^{0})+\nabla P(n^{0})=-n^{0}\mathbf{E}^{0}-n^{0}\textbf{u}^{0},\\
\nabla\cdot\mathbf{E}^{0}=\bar{n}-n^{0},\ \ \ \nabla\times\mathbf{E}^{0}=0,\\
\end{array} \right.\label{R-E6}
\end{equation}
which is the well-known Euler-Poisson equations for semiconductors.
The irrotationality of $\mathbf{E}^{0}$ implies the existence of a
potential function $\mathit\Phi^{0}$ such that
$\mathbf{E}^{0}=-\nabla\mathit\Phi^{0}$. Then using the Green's
formulation, (\ref{R-E6}) can be reduced to the form of the
conservation law with a non-local source term, e.g., see \cite{HMW}.

Secondly, we justify the relaxation limit $(\textit{i.e.}\
\tau\rightarrow0)$ in the Euler-Maxwell equations (\ref{R-E1}). The
diffusion limit was first introduced by Marcati and Natalini
\cite{MN} for the Euler-Poisson equations (\ref{R-E6}). Set
$\varepsilon=1=\lambda$. To do this, as in \cite{MN}, we define the
following scaled transform
\begin{equation}(n^{\tau},\textbf{u}^{\tau},\mathbf{E}^{\tau},\mathbf{B}^{\tau})(t,x)
=\Big(n,\frac{1}{\tau}\textbf{u},\mathbf{E},\mathbf{B}\Big)\Big(\frac{t}{\tau},x\Big).\label{R-E7}
\end{equation}
Then the new variable $(n^{\tau}, \textbf{u}^{\tau},
\textbf{E}^{\tau}, \textbf{B}^{\tau})$ satisfies
\begin{equation}
\left\{
\begin{array}{l}\partial_{t}n^{\tau}+\nabla\cdot(n^{\tau}\mathbf{u}^{\tau})=0,\\
 \tau^2\partial_{t}(n^{\tau}\textbf{u}^{\tau})+\tau^2\nabla\cdot(n^{\tau}\textbf{u}^{\tau}\otimes\textbf{u}^{\tau})+\nabla P(n^{\tau})\\
 \hspace{35mm}=-n^{\tau}(\mathbf{E}^{\tau}+\tau\textbf{u}^{\tau}\times
 \mathbf{B}^{\tau})-n^{\tau}\textbf{u}^{\tau},\\
\tau\partial_{t}\mathbf{E}^{\tau}-\nabla\times\mathbf{B}^{\tau}=
\tau n^{\tau}\textbf{u}^{\tau},\\
\tau\partial_{t}\mathbf{B}^{\tau}+\nabla\times\mathbf{E}^{\tau}=0,\\
\nabla\cdot\mathbf{E}^{\tau}=\bar{n}-n^{\tau},\ \ \
\nabla\cdot\mathbf{B}^{\tau}=0.
 \end{array} \right.\label{R-E8}
\end{equation}
Formally, the limits $\mathcal{N},\mathcal{E}$ of
$n^{\tau},\textbf{E}^{\tau}$ as $\tau\rightarrow0$ satisfy the
so-called drift-diffusion equations
\begin{equation}
\left\{
\begin{array}{l}\partial_{t}\mathcal{N}=\nabla\cdot(\nabla P(\mathcal{N})+\mathcal{N}\mathcal{E}),\\
\nabla\cdot\mathcal{E}=\bar{n}-\mathcal{N},\ \
\nabla\times\mathcal{E}=0,\\
\mathcal{N}(0,x)=n_{0},
 \end{array} \right.\label{R-E9}
\end{equation}
which is a system of diffusion equations for the electron density,
and maintains the parabolic-elliptic character.

Lastly, we study the combined non-relativistic and relaxation limits
in the Euler-Maxwell equations (\ref{R-E1}) $(\textit{i.e.}\
\varepsilon,\tau \rightarrow0)$. Set $\lambda=1$. From the
``$\mathcal{O}(1/\tau)$ time scale" in (\ref{R-E7}), where the
superscript $\tau$ is replaced by $(\tau,\varepsilon)$, the new
variable $(n^{(\tau,\varepsilon)}, \textbf{u}^{(\tau,\varepsilon)},
\textbf{E}^{(\tau,\varepsilon)},\\ \textbf{B}^{(\tau,\varepsilon)})$
satisfies
\begin{equation}
\left\{
\begin{array}{l}\partial_{t}n^{(\tau,\varepsilon)}+\nabla\cdot(n^{(\tau,\varepsilon)}\mathbf{u}^{(\tau,\varepsilon)})=0,\\[1mm]
 \tau^2\partial_{t}(n^{(\tau,\varepsilon)}\textbf{u}^{(\tau,\varepsilon)})+\tau^2\nabla\cdot(n^{(\tau,\varepsilon)}\textbf{u}^{(\tau,\varepsilon)}\otimes\textbf{u}^{(\tau,\varepsilon)})+\nabla P(n^{(\tau,\varepsilon)})
 \nonumber\\ \hspace{25mm}=-n^{(\tau,\varepsilon)}\Big(\mathbf{E}^{(\tau,\varepsilon)}+\tau\varepsilon\textbf{u}^{(\tau,\varepsilon)}\times
 \mathbf{B}^{(\tau,\varepsilon)}\Big)-n^{(\tau,\varepsilon)}\textbf{u}^{(\tau,\varepsilon)},\\[1mm]
\tau\varepsilon\partial_{t}\mathbf{E}^{(\tau,\varepsilon)}-\nabla\times\mathbf{B}^{(\tau,\varepsilon)}=
\tau\varepsilon n^{(\tau,\varepsilon)}\textbf{u}^{(\tau,\varepsilon)},\\[0.5mm]
\tau\varepsilon\partial_{t}\mathbf{B}^{(\tau,\varepsilon)}+\nabla\times\mathbf{E}^{(\tau,\varepsilon)}=0,\\[1mm]
\nabla\cdot\mathbf{E}^{(\tau,\varepsilon)}=\bar{n}-n^{(\tau,\varepsilon)},\
\ \ \nabla\cdot\mathbf{B}^{(\tau,\varepsilon)}=0.
 \end{array} \right.\label{R-E10}
\end{equation}
Obviously, in the process of combined limits
$\tau,\varepsilon\rightarrow0$, the limits $\mathcal{N},\mathcal{E}$
of $n^{(\tau,\varepsilon)},\textbf{E}^{(\tau,\varepsilon)}$ also
satisfy the drift-diffusion equations (\ref{R-E9}).

\subsection{Main results}\label{sec:1.2}
In the past ten years, the Euler-Poisson equations (\ref{R-E6}) have
attracted much attention. There are many contributions in
mathematical analysis, such as the well-posedness of steady-state
solutions, global existence of classical or entropy weak solutions,
large time behavior of classical solutions, relaxation limit
problems and so on, the reader is referred to
\cite{A,DM,G,Gu,HMW,HJZ,HZ,LMM,MN} and the references therein, also
including ourselves \cite{FXZ,X,XY}, while the Euler-Maxwell
equations are much more intricate than the Euler-Poisson equations,
not only because of Maxwell's equations, but also because of the
complicated coupling of the Lorentz force
$(\textbf{E}+\textbf{u}\times \textbf{B})$. In contrast, not so many
works have been devoted to the study of Euler-Maxwell equations. Up
to now, only partial results are available.

Using the Godunov scheme with the fractional step and the
compensated compactness theory, Chen, Jerome and Wang \cite{CJW}
constructed the existence of a global weak solution to the initial
boundary value problem for arbitrarily large initial data in
$L^\infty(\mathbf{R})$. In \cite{J}, assuming initial data in
Sobolev spaces $H^s(\mathbf{R}^3)$ with higher regularity ($s>5/2$),
a local existence theory of smooth solutions for the Cauchy problem
of non-isentropic Euler-Maxwell equations, where the
pressure-density function $(\ref{R-E2})$ is replaced with the energy
equation, was established by modificating the classical
semigroup-resolvent approach of Kato \cite{K}. In
\cite{PW1,PW2,PW3}, based on the existence theory of Kato and Majda
\cite{K,M}, Peng and Wang justified the non-relativistic limit
$(\varepsilon\rightarrow0)$, the quasi-neutral limit
$(\lambda\rightarrow0)$ and the combined non-relativistic and
quasi-neutral limits $(\varepsilon=\lambda\rightarrow0)$ for the
Euler-Maxwell equations (\ref{R-E1}) in virtue of the analysis of
asymptotic expansions. Their results show that the Euler-Maxwell
equations converge towards the Euler-Poisson equations, e-MHD system
and incompressible Euler equations in some time-interval independent
of the parameters $\varepsilon$ and $\lambda$, respectively.

However, the well-posedness and singular limits for the
Euler-Maxwell equations (\ref{R-E1}) in several dimensions are still
far from well-known, in particular, in the framework of critical
spaces. In the present paper, we shall answer this problem. More
concretely speaking, we shall consider a small perturbation near the
constant equilibrium state
$(\bar{n},\mathbf{0},\mathbf{0},\overline{\mathbf{B}})$ which is a
particular solution of the Cauchy problem (\ref{R-E1})-(\ref{R-E3}),
and obtain the global existence and uniqueness of classical
solutions. We choose the critical Besov spaces in space-variable $x$
as the basic functional setting, where the regularity index
$(\sigma=1+N/2)$ is just the limit case of classical existence
theory of Kato and Majda \cite{K,M}. Although this idea has been
used to study the compressible Euler-Poisson equations (\ref{R-E6})
in \cite{FXZ,X,XY} recently, it should be pointed out that the
Euler-Maxwell equations are essentially different from (\ref{R-E6}).
In comparison with the methods in \cite{FXZ,X,XY}, we have to face
with several technical difficulties arising in the uniform \textit{a
priori} estimates of classical solutions in critical spaces. The
first one is lack of the low-frequency estimate of magnetic field
$\mathbf{B}$, which does not lead to the exponential decay near
equilibrium in view of the standard definition of norm of Besov
spaces. Another one is that the nonlinear terms (pressure, Lorentz
field, etc.) will hinder us establishing the uniform estimates with
respect to the singular parameter couple $(\tau,\varepsilon)$. To
overcome these difficulties, we add the new content in the proof of
the local existence and (uniform) global existence of classical
solutions. Actually, the Chemin-Lerner's spaces
$\widetilde{L}^{\rho}_{T}(B^{s}_{p,r})$ in \cite{C2} are introduced,
which is a refinement of the usual spaces
$L^{\rho}_{T}(B^{s}_{p,r})$, and some uniform frequency-localization
estimates in Chemin-Lerner's spaces with critical regularity are
established, for details, see Lemmas \ref{lem4.1}-\ref{lem4.2} and
Lemmas \ref{lem4.4}-\ref{lem4.5}. Based on the uniform estimates, we
further rigorously justify the singular limit problems for
(\ref{R-E1})-(\ref{R-E3}) in Sect.~\ref{sec:1.1} by the standard
weak convergence methods and the application of compactness theorem
in \cite{S}.

Throughout this paper, the regularity index $\sigma=1+N/2$. First of
all, we state a local existence and uniqueness theorem of classical
solutions to the Cauchy problem (\ref{R-E1})-(\ref{R-E3}) away from
the vacuum.

\begin{theorem}\label{thm1.1} Let $\bar{n}>0$ be a constant reference density and
$\overline{\mathbf{B}}\in \mathbf{R}^{N}$ be any given constant.
Suppose that $n_{0}-\bar{n}, \mathbf{u}_{0},\mathbf{E}_{0}$ and
$\mathbf{B}_{0}-\overline{\mathbf{B}}\in
B^{\sigma}_{2,1}(\mathbf{R}^{N})$ satisfy $n_{0}>0$ and the
compatible conditions (\ref{R-E4}), then there exist a time
$T_{0}>0$ and a unique solution
$(n,\mathbf{u},\mathbf{E},\mathbf{B})$ of the system
(\ref{R-E1})-(\ref{R-E3}) such that
$$(n,\mathbf{u},\mathbf{E},\mathbf{B})\in \mathcal{C}^{1}([0,T_{0}]\times \mathbf{R}^{N})\ \ \ \mbox{with}\
\ \  n>0\ \  \mbox{for all} \ \ t\in [0,T_{0}]$$ and
$$(n-\bar{n},\mathbf{u},\mathbf{E}, \mathbf{B}-\overline{\mathbf{B}})\in
\widetilde{\mathcal{C}}_{T_{0}}(B^{\sigma}_{2,1})\cap
\widetilde{\mathcal{C}}^1_{T_{0}}(B^{\sigma-1}_{2,1}).$$
\end{theorem}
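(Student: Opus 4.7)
I would first reformulate \eqref{R-E1} around the constant equilibrium by setting $\rho=n-\bar n$, $\widetilde{\mathbf{B}}=\mathbf{B}-\overline{\mathbf{B}}$ and $U=(\rho,\mathbf{u},\mathbf{E},\widetilde{\mathbf{B}})^\top$. A short computation rewrites the system as a symmetrizable quasilinear hyperbolic system
$$\partial_t U+\sum_{j=1}^N A_j(U)\,\partial_j U=H(U),$$
where the $A_j$ are symmetric after multiplication by a positive-definite symmetrizer depending smoothly on $U$ (using $P'(n)/n$ on the density block, noting that $\operatorname{curl}$ is skew-symmetric so the Maxwell block is already symmetric), and $H(U)$ collects the Lorentz force $-n(\mathbf{E}+\varepsilon\mathbf{u}\times\mathbf{B})$ and the relaxation $-n\mathbf{u}/\tau$. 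I would then run a standard Picard iteration: set $U^0\equiv 0$ and, given $U^k$, let $U^{k+1}$ solve the linear problem
$$\partial_t U^{k+1}+\sum_j A_j(U^k)\,\partial_j U^{k+1}=H(U^k),\qquad U^{k+1}|_{t=0}=S_k U_0,$$
with $S_k$ the standard low-frequency cut-off at dyadic level $k$ to ensure each linearized problem is uniquely solvable in smooth classes by classical hyperbolic theory.

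\textbf{Uniform bounds in the Chemin--Lerner space.} The core of the proof is a uniform bound of $\{U^k\}$ in $\widetilde{L}^\infty_T(B^\sigma_{2,1})$. Applying $\Delta_q$ to the linearized equation, symmetrizing and pairing with $\Delta_q U^{k+1}$ in $L^2$, then summing the resulting inequality weighted by $2^{q\sigma}$, I expect an estimate of the form
$$\|U^{k+1}\|_{\widetilde{L}^\infty_t(B^\sigma_{2,1})}\le C\|U_0\|_{B^\sigma_{2,1}}+C\int_0^t\bigl(1+\|U^k\|_{B^\sigma_{2,1}}\bigr)\|U^{k+1}\|_{B^\sigma_{2,1}}\,ds.$$
The variable-coefficient commutator $[A_j(U^k),\Delta_q]\partial_j U^{k+1}$ is handled by the standard critical commutator estimate, for which the $\ell^1$ summation index $r=1$ in $B^\sigma_{2,1}$ is essential, and $H(U^k)$ is controlled by the product and composition estimates in $B^\sigma_{2,1}$, which is a Banach algebra since $\sigma=1+N/2>N/2$. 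Choosing $T_0$ small enough, and noting that the lower bound $n^k>0$ is preserved on $[0,T_0]$ because $\|\rho^k\|_{L^\infty}\lesssim\|\rho^k\|_{B^\sigma_{2,1}}$ stays small, these a priori estimates show that the iterates remain in a fixed ball of $\widetilde{L}^\infty_{T_0}(B^\sigma_{2,1})$.

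\textbf{Convergence, constraints and uniqueness.} I would then show $\{U^k\}$ is Cauchy in the larger space $\widetilde{L}^\infty_{T_0}(B^{\sigma-1}_{2,1})$: subtracting consecutive linearized equations and performing the same energy argument with a one-derivative loss (which is needed to absorb $A_j(U^k)-A_j(U^{k-1})$) closes via Gronwall after possibly shrinking $T_0$. Interpolating this strong convergence with the uniform $B^\sigma_{2,1}$ bound yields a limit $U\in\widetilde{\mathcal{C}}_{T_0}(B^\sigma_{2,1})$; time continuity is built into the Chemin--Lerner formulation and the $B^{\sigma-1}_{2,1}$ regularity of $\partial_t U$ is read off the equation, giving membership in $\widetilde{\mathcal{C}}^1_{T_0}(B^{\sigma-1}_{2,1})$. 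The compatibility constraints (\ref{R-E4}) propagate in time because, thanks to the continuity equation and Faraday's law, $\partial_t(\lambda^2\nabla\cdot\mathbf{E}+n-\bar n)=0$ and $\partial_t\nabla\cdot\mathbf{B}=0$, so they hold for all $t\in[0,T_0]$. Uniqueness follows from the same $B^{\sigma-1}_{2,1}$ stability estimate applied to the difference of two solutions. The main technical obstacle is closing the estimate at the critical index $\sigma=1+N/2$: one is on the borderline of the product and commutator rules, and it is precisely the $\ell^1$ Besov summation together with the Chemin--Lerner time refinement that makes the energy identity close without a logarithmic loss.
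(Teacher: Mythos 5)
Your proposal is sound in spirit, but it takes a genuinely different route from the paper's, and the difference is precisely the one the author flags as a key technical choice. You keep $\rho=n-\bar n$ and symmetrize the Euler block with a solution-dependent symmetrizer $A_0(U)$ (with $P'(n)/n$ on the density entry), which forces you to estimate, beyond the commutators $[\Delta_q,A_j(U^k)]\partial_j U^{k+1}$, also the time-derivative term $\int (\partial_t A_0(U^k))|\Delta_q U^{k+1}|^2$ and the commutator $[\Delta_q,A_0(U^k)]$ in the energy functional. These are all manageable at the critical index via the $\ell^1$ Besov commutator estimate and the chain $\|\partial_t U^k\|_{L^\infty}\lesssim\|\nabla U^k\|_{L^\infty}\lesssim\|U^k\|_{B^\sigma_{2,1}}$, so there is no gap, but they are exactly the ``excessive commutators'' Remark 1.1 warns about. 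The paper instead changes variables to $\varrho=\tfrac{2}{\gamma-1}\bigl(\psi(n)-\bar\psi\bigr)$ with $\psi=\sqrt{P'}$ (and the logarithmic version for $\gamma=1$), so that the resulting system \eqref{R-E17} is already symmetric with the \emph{constant} leading matrices $A^I_j(0)$ plus lower-order perturbations; only commutators with the drift velocity $\mathbf{u}$ and with $\varrho$ itself survive, and no nontrivial $A_0$ appears. What the paper's choice buys is a much shorter energy identity \eqref{R-E22}--\eqref{R-E25}; what yours buys is that you stay in the ``physical'' variable $n-\bar n$ and do not need the pressure law to be a power law to write down the transform explicitly. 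A second, more cosmetic difference: you build the solution by Picard iteration from scratch, while the paper invokes the local theory of \cite{FXZ} to get $W\in\mathcal{C}([0,T_0],B^\sigma_{2,1})\cap\mathcal{C}^1([0,T_0],B^{\sigma-1}_{2,1})$ and the sole new content of Proposition \ref{prop3.1} is the a~priori bound \eqref{R-E26}--\eqref{R-E28} upgrading this to the Chemin--Lerner space $\widetilde{L}^\infty_{T_0}(B^\sigma_{2,1})$. Your constraint-propagation computation ($\partial_t(\lambda^2\nabla\cdot\mathbf{E}+n-\bar n)=0$, $\partial_t\nabla\cdot\mathbf{B}=0$) and the contraction in $B^{\sigma-1}_{2,1}$ for uniqueness are both correct and standard. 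One small imprecision: truncating only the initial data $S_kU_0$ does not by itself make the linearized problem ``smooth,'' since the coefficients $A_j(U^k)$ remain at the $B^\sigma_{2,1}$ regularity; one either mollifies the coefficients as well, or — as the literature you would be implicitly leaning on does — proves solvability of the rough-coefficient linear transport/hyperbolic system directly.
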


\begin{remark}
\rm To avoid excessive commutators arising from the nonlinear
pressure term by using Fourier frequency-localization method, we
introduce a function transform in Sect.~\ref{sec:3.1} such that the
Euler-Maxwell equations (\ref{R-E1}) is reduced to a symmetric
hyperbolic system. Based on the previous effort in \cite{FXZ}, we
obtain the local existence of classical solutions in the
Chemin-Lerner's space with critical regularity (Proposition
\ref{prop3.1}). Theorem \ref{thm1.1} follows from Proposition
\ref{prop3.1} and Remark \ref{rem3.1} readily. As a matter of fact,
the new result is applicable to generally symmetrizable hyperbolic
systems, which enriches and develops the classical existence theory
of Kato and Majda \cite{K,M}.
\end{remark}

In small amplitude regime, we get the uniform global well-posedness
of classical solutions to the Cauchy problem
(\ref{R-E1})-(\ref{R-E3}) in critical spaces. From now on, we set
the scaled Debye length to be one ($\lambda\equiv1$).

\begin{theorem}\label{thm1.2}
 Let $\bar{n}>0$ be a constant reference density and
$\overline{\mathbf{B}}\in \mathbf{R}^{N}$ be any given constant.
Suppose that $n_{0}-\bar{n}, \mathbf{u}_{0},\mathbf{E}_{0}$ and
$\mathbf{B}_{0}-\overline{\mathbf{B}}\in
B^{\sigma}_{2,1}(\mathbf{R}^{N})$ satisfy the compatible conditions
(\ref{R-E4}). There exists a positive constant $\delta_{0}$
independent of singular parameter couple $(\tau,\varepsilon)$, such
that if
\begin{eqnarray*}
\|(n_{0}-\bar{n}, \mathbf{u}_{0},\mathbf{E}_{0},
\mathbf{B}_{0}-\overline{\mathbf{B}})\|_{B^{\sigma}_{2,1}}\leq\delta_{0},
\end{eqnarray*}
then there exists a unique global solution
$(n,\mathbf{u},\mathbf{E},\mathbf{B})$ of the system
(\ref{R-E1})-(\ref{R-E3}) satisfying
\begin{eqnarray*}
(n,\mathbf{u},\mathbf{E},\mathbf{B})\in
\mathcal{C}^{1}([0,\infty)\times \mathbf{R}^{N})
\end{eqnarray*}
and
\begin{eqnarray*}
(n-\bar{n},\mathbf{u},\mathbf{E},\mathbf{B}-\overline{\mathbf{B}})
\in \widetilde{\mathcal{C}}(B^{\sigma}_{2,1}(\mathbf{R}^{N}))\cap
\widetilde{\mathcal{C}}^1(B^{\sigma-1}_{2,1}(\mathbf{R}^{N})).
\end{eqnarray*}
Moreover, the uniform energy estimate holds:
\begin{eqnarray}
&&\|(n-\bar{n},\mathbf{u}, \mathbf{E},
\mathbf{B}-\overline{\mathbf{B}})\|_{\widetilde{L}^\infty(B^{\sigma}_{2,1})}
\nonumber\\&&+\mu_{0}\Big\{\Big\|\Big(\sqrt{\tau}(n-\bar{n}),\frac{1}{\sqrt{\tau}}\mathbf{u},\sqrt{\tau\varepsilon}\mathbf{E}\Big)\Big\|_{\widetilde{L}^2(B^{\sigma}_{2,1})}
+\Big\|\frac{1}{\sqrt{\varepsilon}}\nabla\mathbf{B}\Big\|_{\widetilde{L}^2(B^{\sigma-1}_{2,1})}\Big\}
\nonumber\\&\leq& C_{0}\|(n_{0}-\bar{n},
\mathbf{u}_{0},\mathbf{E}_{0},
\mathbf{B}_{0}-\overline{\mathbf{B}})\|_{B^{\sigma}_{2,1}}\label{R-E11}
\end{eqnarray}
for $0<\tau,\varepsilon\leq1$, where the positive constants
$\mu_{0},C_{0}$ are independent of $(\tau,\varepsilon)$.
\end{theorem}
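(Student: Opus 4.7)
The plan is to upgrade the local existence of Theorem~\ref{thm1.1} to a global one by deriving \emph{uniform-in-$(\tau,\varepsilon)$} a priori bounds precisely of the form (\ref{R-E11}), then closing with the usual continuation bootstrap. Using the function transform of Sect.~\ref{sec:3.1}, I would first rewrite the perturbation (with $m$ denoting the symmetrized pressure variable and $W=(m,\mathbf{u},\mathbf{E},\mathbf{B}-\overline{\mathbf{B}})$ the full unknown) as a quasilinear symmetric hyperbolic system $A_0(W)\partial_t W + A_j(W)\partial_j W = L_{\tau,\varepsilon}\,W + Q(W,DW)$, where $L_{\tau,\varepsilon}$ collects the relaxation term $-\mathbf{u}/\tau$ and the Maxwell coupling with their $(\tau,\varepsilon)$-dependent coefficients.

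Next, I would localize in frequency by applying $\Delta_q$ and design, at each dyadic scale $q$, a Lyapunov functional
\begin{equation*}
\mathcal{L}_q^2 \;=\; \|\Delta_q W\|_{L^2}^2 \;-\; \kappa_1\,\tau\,2^{-q}\langle \Delta_q \mathbf{u},\,\nabla\Delta_q m\rangle \;-\; \kappa_2\,\varepsilon\,\tau\,\langle \Delta_q \mathbf{E},\,\nabla\times\Delta_q\mathbf{B}\rangle,
\end{equation*}
with $\kappa_1,\kappa_2$ chosen small enough to guarantee $\mathcal{L}_q^2\sim\|\Delta_q W\|_{L^2}^2$. The first correction forces out the missing dissipation of the mass variable $m$ through the momentum equation, while the second extracts the $\varepsilon^{-1}2^{2q}\|\Delta_q\mathbf{B}\|_{L^2}^2$ dissipation from the curl--curl structure of Maxwell. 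Differentiating in time and using the symmetric form of the system together with the constraints $\nabla\cdot\mathbf{E}=\bar n-n$ and $\nabla\cdot\mathbf{B}=0$, the target estimate is
\begin{equation*}
\frac{d}{dt}\mathcal{L}_q^2 + \mu_0\Bigl(\tau\|\Delta_q m\|_{L^2}^2 + \frac{1}{\tau}\|\Delta_q\mathbf{u}\|_{L^2}^2 + \tau\varepsilon\|\Delta_q\mathbf{E}\|_{L^2}^2 + \frac{2^{2q}}{\varepsilon}\|\Delta_q\mathbf{B}\|_{L^2}^2\Bigr) \;\leq\; 2\,\mathcal{L}_q\,\|\Delta_q R_q\|_{L^2},
\end{equation*}
where $R_q$ collects the commutator $[A_j,\Delta_q]\partial_j W$ and the nonlinear remainder.

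Third, I would take square roots, multiply by $2^{q\sigma}$ and sum over $q$ in the Chemin--Lerner manner, invoking the frequency-localization bounds in Lemmas~\ref{lem4.1}--\ref{lem4.2} and \ref{lem4.4}--\ref{lem4.5} together with the critical Besov product laws to control $R_q$ by the product of $\|W\|_{\widetilde{L}^\infty_T(B^\sigma_{2,1})}$ and the dissipation norm, with constants uniform in $(\tau,\varepsilon)$. This yields a closed inequality $E(T)\leq C_0 E(0)+C\,E(T)^2$ on the left-hand side of (\ref{R-E11}). Choosing $\delta_0$ so small that $4CC_0\delta_0<1$, a standard bootstrap on the set $\{T\geq 0:E(T)\leq 2C_0\delta_0\}$ forbids finite-time blow-up of the Chemin--Lerner norm; since $B^\sigma_{2,1}\hookrightarrow\mathcal{C}^1$ for $\sigma=1+N/2$, this also excludes any classical blow-up and extends the local solution of Theorem~\ref{thm1.1} to $[0,\infty)$.

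The hard part will be the design of $\mathcal{L}_q$: the cross terms must be tuned so that their time derivatives are absorbed by the dissipation uniformly both in the dyadic index $q$ (which is what forces the particular $2^{-q}$ and $2^{2q}$ weights above) \emph{and} in $(\tau,\varepsilon)\in(0,1]^2$. In particular, the magnetic field $\mathbf{B}$ has no direct damping, so the dissipation $\varepsilon^{-1/2}\nabla\mathbf{B}$ must be squeezed indirectly out of the wave-type coupling $\tau\varepsilon\partial_t\mathbf{E}-\nabla\times\mathbf{B}=\tau\varepsilon\,n\mathbf{u}$ without spoiling the already delicate dissipation of $\mathbf{u}$ and $\mathbf{E}$; balancing these weights against the singular scalings is the principal technical point, and is precisely what dictates the specific combination of norms appearing on the left of (\ref{R-E11}).
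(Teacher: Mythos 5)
Your high-level strategy (symmetrize, frequency-localize, build a Kawashima-type hypocoercive corrector, close with continuation) is the same as the paper's, and your first cross term $\langle\Delta_q\mathbf{u},\nabla\Delta_q m\rangle$ is indeed the dyadic-block incarnation of the paper's Shizuta--Kawashima estimate (Lemma~\ref{lem4.2} with the multiplier $K(\xi)$). However, the Lyapunov functional you wrote down has two genuine gaps precisely where the paper has to work hardest.

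First, your target differential inequality posits dissipation of the form $\frac{2^{2q}}{\varepsilon}\|\Delta_q\mathbf{B}\|_{L^2}^2$ at every $q$, including $q=-1$. That is not achievable: what the Maxwell cross term actually produces (as in Lemma~\ref{lem4.5}) is $\frac{1}{\varepsilon}\|\nabla\Delta_q\mathbf{B}\|_{L^2}^2$, and at $q=-1$ Bernstein only gives $\|\nabla\Delta_{-1}\mathbf{B}\|_{L^2}\leq C\|\Delta_{-1}\mathbf{B}\|_{L^2}$, not the reverse. The paper explicitly flags the absence of the low-frequency dissipation of $\mathbf{B}$ as the first main difficulty (see the remark after Lemma~\ref{lem4.5} and the introduction), and it is precisely why the left side of (\ref{R-E11}) carries only $\|\varepsilon^{-1/2}\nabla\mathbf{B}\|_{\widetilde L^2(B^{\sigma-1}_{2,1})}$ rather than a genuine $\mathbf{B}$-dissipation, and why $\widetilde L^\infty$ in time is needed for $\Delta_{-1}\mathbf{B}$. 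Your asserted inequality therefore cannot be proved, and the bootstrap as stated would not close.

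Second, you have no mechanism producing the $\tau\varepsilon\|\Delta_q\mathbf{E}\|_{L^2}^2$ dissipation. Your curl cross term can at best extract $\|\nabla\times\Delta_q\mathbf{E}\|^2$ (high-frequency, divergence-free part); it sees neither the gradient part of $\mathbf{E}$ nor the low frequencies. The paper's Lemma~\ref{lem4.4} needs two separate ingredients: for high frequencies the elliptic constraint $\nabla\cdot\mathbf{E}=-h(\varrho)$ (which converts $\|\nabla\cdot\Delta_q\mathbf{E}\|$ into $\|\Delta_q\varrho\|$) combined with the curl part; for low frequencies the momentum equation itself, exploiting the appearance of $\mathbf{E}$ in the Lorentz force. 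Neither appears in your $\mathcal{L}_q$. (There is also a purely bookkeeping problem: $\kappa_2\varepsilon\tau\langle\Delta_q\mathbf{E},\nabla\times\Delta_q\mathbf{B}\rangle$ scales like $2^q\|\Delta_q\mathbf{E}\|\,\|\Delta_q\mathbf{B}\|$, so without a compensating $2^{-q}$ the claimed equivalence $\mathcal{L}_q^2\sim\|\Delta_q W\|_{L^2}^2$ fails as $q\to\infty$.) Your concluding remarks correctly identify the balancing of weights as the crux, but the concrete functional you propose does not yet resolve it, whereas the paper does so via the separate high/low-frequency treatment of $\mathbf{E}$ and by settling for $\nabla\mathbf{B}$ rather than $\mathbf{B}$ dissipation.
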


\begin{remark}
\rm  Together with Theorem \ref{thm1.1}, Theorem \ref{thm1.2}
directly follows from the standard continuation argument and the
crucial energy estimate (\ref{R-E11}) which presents the dissipation
rates of all the components in the solution. Noticing that the
coupled electromagnetic field $(\mathbf{E},\mathbf{B})$ appears in
the nonlinear source terms of Euler system, which indeed does not
affect the character of corresponding linearized form, so we can
take the full advantage of ``Shizuta-Kawashima" skew-symmetry
condition which was well developed for general hyperbolic systems of
balance laws \cite{KY,Y} to capture the dissipation rate of density
function, see Lemma \ref{lem4.2}. On the other hand, from the proof
of Lemmas \ref{lem4.4}-\ref{lem4.5}, we see that the electromagnetic
field generated by the compressible electron flow exhibits a weak
dissipation property, which is essentially different from the pure
Maxwell's equations, although the low-frequency estimate of
dissipation rate of $\mathbf{B}$ is absent. In addition, we track
the singular parameters $\tau$ and $\varepsilon$ in the proof of
(\ref{R-E11}), which plays a key role in the study of related limit
problems.
\end{remark}

As a direct consequence of Theorem \ref{thm1.2}, we can obtain the
large-time asymptotic behavior of global solutions near the
equilibrium $(\bar{n},\mathbf{0},\mathbf{0},\overline{\mathbf{B}})$
in some Besov spaces.
\begin{corollary}\label{cor1.3}
Let $(n,\mathbf{u},\mathbf{E},\mathbf{B})$ be the solution in
Theorem \ref{thm1.2}, it holds that
$$\|n(\cdot,t)-\bar{n},\mathbf{u}(\cdot,t), \mathbf{E}(\cdot,t)\|_{B^{\sigma-\varepsilon'}_{2,1}(\mathbf{R}^{N})}\rightarrow 0,
\ \ \ \
\|\mathbf{B}(\cdot,t)-\overline{\mathbf{B}}\|_{B^{\sigma-1-\varepsilon'}_{p,1}(\mathbf{R}^{N})}\rightarrow
0, $$ as the time variable $t\rightarrow +\infty$, where
$p=\frac{2N}{N-2}(N>2)$ and $\varepsilon'>0$.
\end{corollary}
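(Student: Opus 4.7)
The plan is to deduce Corollary~\ref{cor1.3} directly from the uniform energy estimate (\ref{R-E11}) of Theorem~\ref{thm1.2} (specialised to $\tau=\varepsilon=1$), upgrading the square-integrability in time of the dissipation to pointwise-in-time decay at the cost of an $\varepsilon'$ loss of regularity. Set $v:=(n-\bar{n},\mathbf{u},\mathbf{E})$ and $w:=\mathbf{B}-\overline{\mathbf{B}}$. The estimate (\ref{R-E11}) furnishes $v\in L^\infty([0,+\infty);B^{\sigma}_{2,1})\cap L^2([0,+\infty);B^{\sigma}_{2,1})$, $w\in L^\infty([0,+\infty);B^{\sigma}_{2,1})$, and $\nabla w\in L^2([0,+\infty);B^{\sigma-1}_{2,1})$, all uniformly in time.

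The first step is to produce an $L^2_t$-control of a weaker spatial norm. For $v$ the embedding $B^{\sigma}_{2,1}\hookrightarrow B^{\sigma-1}_{2,1}$ immediately gives $v\in L^2_t(B^{\sigma-1}_{2,1})$. For $w$ the dissipation controls only the gradient, so one invokes the isomorphism $\nabla:\dot B^{\sigma}_{2,1}\to\dot B^{\sigma-1}_{2,1}$ together with the Sobolev-type Besov embedding $\dot B^{\sigma}_{2,1}\hookrightarrow\dot B^{\sigma-1}_{p,1}$, which is valid precisely for $p=2N/(N-2)$. This yields $\|w(t)\|_{\dot B^{\sigma-1}_{p,1}}\leq C\|\nabla w(t)\|_{B^{\sigma-1}_{2,1}}$ and hence the $L^2_t$-integrability of $\|w\|_{\dot B^{\sigma-1}_{p,1}}$; the residual low-frequency contribution that is needed to pass from the homogeneous to the inhomogeneous norm is uniformly bounded thanks to $B^{\sigma}_{2,1}\hookrightarrow B^{\sigma-1}_{p,1}$.

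Next, plugging the solution back into (\ref{R-E1}) and using the product/composition estimates in Chemin-Lerner spaces established in the proof of Theorem~\ref{thm1.2} gives $\partial_t v,\partial_t w\in L^\infty_t(B^{\sigma-1}_{2,1})$; this is already part of the $\widetilde{\mathcal{C}}^1(B^{\sigma-1}_{2,1})$-regularity asserted there. Consequently both scalar functions $t\mapsto\|v(t)\|_{B^{\sigma-1}_{2,1}}^{2}$ and $t\mapsto\|w(t)\|_{\dot B^{\sigma-1}_{p,1}}^{2}$ are Lipschitz on $[0,+\infty)$ and, by the previous step, lie in $L^1([0,+\infty))$. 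A standard contradiction argument (any sequence of bumps of fixed height forces intervals of positive length on which the function exceeds a fixed positive value, contradicting integrability) shows that they must tend to zero at infinity, so $\|v(t)\|_{B^{\sigma-1}_{2,1}}\to 0$ and $\|w(t)\|_{\dot B^{\sigma-1}_{p,1}}\to 0$ as $t\to+\infty$. Interpolating each of these with the corresponding uniform bound via the elementary H\"older inequality applied dyadically,
\begin{equation*}
\|u\|_{B^{(1-\theta)s_0+\theta s_1}_{q,1}}\leq \|u\|_{B^{s_0}_{q,1}}^{1-\theta}\|u\|_{B^{s_1}_{q,1}}^{\theta},
\end{equation*}
with $\theta=1-\varepsilon'$ and the endpoints $(s_0,s_1)=(\sigma-1,\sigma)$, $q=2$ for $v$, and $(s_0,s_1)=(\sigma-2,\sigma-1)$, $q=p$ for $w$ (after handling the low-frequency part separately), produces the stated convergences.

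The main technical obstacle is the absence of any low-frequency dissipation for $\mathbf{B}$ in (\ref{R-E11}): only $\nabla\mathbf{B}$ is controlled in $L^2_t$, which is precisely why the limit for $w$ is phrased with the shifted Sobolev exponent $p=2N/(N-2)$ rather than in $B^{\sigma-\varepsilon'}_{2,1}$. Careful tracking of homogeneous versus inhomogeneous Besov scales, and in particular treating the bounded but non-decaying low-frequency contribution to $w$ via interpolation against the uniform $L^\infty_t(B^{\sigma-1}_{p,1})$-bound, is the one delicate bookkeeping step; everything else follows routinely from Theorem~\ref{thm1.2} and standard Besov calculus.
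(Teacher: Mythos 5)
The paper gives no real proof; Remark~\ref{rem1.3} merely points to Corollary 5.1 of \cite{FX} (which treats the damped Euler system, with no magnetic field) ``with a minor revision.'' Your overall strategy is surely the intended one: extract $L^2_t$-control from (\ref{R-E11}), upgrade to pointwise-in-time decay via Lipschitz continuity of the squared norm, and interpolate against the uniform $L^\infty_t$-bound. The treatment of $(n-\bar n,\mathbf{u},\mathbf{E})$ is correct. But for $w=\mathbf{B}-\overline{\mathbf{B}}$, which is exactly what the ``minor revision'' must handle, there are two gaps.

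First, the Lipschitz claim for $t\mapsto\|w(t)\|^2_{\dot B^{\sigma-1}_{p,1}}$ does not follow from $\partial_t w\in L^\infty_t(B^{\sigma-1}_{2,1})$. The Sobolev--Besov embedding for $p=2N/(N-2)$ costs exactly one derivative, $\dot B^s_{2,1}\hookrightarrow\dot B^{s-1}_{p,1}$, so $B^{\sigma-1}_{2,1}\hookrightarrow\dot B^{\sigma-2}_{p,1}$ only; there is no embedding of $B^{\sigma-1}_{2,1}$ into $\dot B^{\sigma-1}_{p,1}$. Consequently the $L^1$-plus-Lipschitz argument gives $\|w(t)\|_{\dot B^{\sigma-2}_{p,1}}\to 0$, one order lower than you state. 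This happens to be harmless for the end result, because your interpolation endpoints $(\sigma-2,\sigma-1)$ only require decay at one endpoint and boundedness at the other, and $\|w\|_{\dot B^{\sigma-1}_{p,1}}\leq C\|w\|_{B^{\sigma}_{2,1}}$ is indeed uniformly bounded; but the exponent in the intermediate decay claim should be corrected.

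Second, and more seriously, you dispose of the low-frequency block by calling it ``uniformly bounded'' and merely promising to ``handle the low-frequency part separately.'' The inhomogeneous norm $B^{\sigma-1-\varepsilon'}_{p,1}$ contains the block $\|\Delta_{-1}w\|_{L^p}$ with full weight, so a bounded but non-decaying $\Delta_{-1}$-block would falsify the corollary. In fact that block does decay, and this is precisely what the exponent $p=2N/(N-2)$ buys: the Sobolev inequality $\|f\|_{L^p}\leq C\|\nabla f\|_{L^2}$ applied to $\Delta_{-1}w$ gives $\|\Delta_{-1}w\|_{L^p}\leq C\|\Delta_{-1}\nabla w\|_{L^2}\leq C\|\nabla\mathbf{B}\|_{B^{\sigma-1}_{2,1}}\in L^2_t$, while Bernstein on the low-frequency ball together with $\partial_t w\in L^\infty_t(B^{\sigma-1}_{2,1})\hookrightarrow L^\infty_t(L^2)$ gives $\partial_t\Delta_{-1}w\in L^\infty_t(L^p)$. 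The same $L^1$-plus-Lipschitz lemma then yields $\|\Delta_{-1}w\|_{L^p}\to 0$. This must be spelled out explicitly: as written, your proposal reads as if boundedness of the low-frequency block suffices, and it does not.
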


\begin{remark}\label{rem1.3}
\rm Recalling the proof of Corollary 5.1 in \cite{FX}, Corollary
\ref{cor1.3} is followed by a minor revision. The definite
convergence rate to the equilibrium
$(\bar{n},\mathbf{0},\mathbf{0},\overline{\mathbf{B}})$ will be
studied in the future work.
\end{remark}

Next, we state the non-relativistic limit of uniform global
solutions to (\ref{R-E1})-(\ref{R-E3}) for any fixed momentum
relaxation time $\tau>0$.

\begin{theorem}[\textbf{Non-relativistic
limit}]\label{thm1.3} Let $\tau=1$ and $(n^{\varepsilon},
\mathbf{u}^{\varepsilon}, \mathbf{E}^{\varepsilon},
\mathbf{B}^{\varepsilon})$ be the global solution of
(\ref{R-E1})-(\ref{R-E3}) given by Theorem \ref{thm1.2}. Then there
exists some function $(n^{0},\mathbf{u}^{0},\mathbf{E}^{0})$ which
is a global solution to the Euler-Poisson equations (\ref{R-E6})
satisfying $(n^{0}-\bar{n},\mathbf{u}^{0},\mathbf{E}^{0})\in
\mathcal{C}([0,\infty), B^{\sigma}_{2,1}(\mathbf{R}^{N}))$ such that
as $\varepsilon\rightarrow0$, it holds that
\begin{eqnarray*}
(n^{\varepsilon},\mathbf{u}^{\varepsilon},\sqrt{\varepsilon}\mathbf{E}^{\varepsilon})\rightarrow
(n^{0},\mathbf{u}^{0},\mathbf{0})\ \ \ \ \mbox{strongly in}\ \ \
\mathcal{C}([0,T],(B^{\sigma-\delta}_{2,1}(\mathbf{R}^{N}))_{\mathrm{loc}}),
\end{eqnarray*}
\begin{eqnarray*}
\nabla\mathbf{B}^{\varepsilon}\rightarrow \mathbf{0}\ \ \ \
\mbox{strongly
 in}\ \ \ L^2_{T}(B^{\sigma-1}_{2,1}(\mathbf{R}^{N})),
\end{eqnarray*}
\begin{eqnarray*}
(\mathbf{E}^{\varepsilon},\mathbf{B}^{\varepsilon})\rightharpoonup
(\mathbf{E}^{0}, \overline{\mathbf{B}})\ \ \ \
\mbox{weakly$^{\star}$
 in}\ \ \ L^\infty_{T}(B^{\sigma}_{2,1}(\mathbf{R}^{N})),
\end{eqnarray*}
for any $T>0$ and $\delta\in(0,1)$. Moreover, it yields
\begin{eqnarray}
&&\|(n^{0}-\bar{n},\mathbf{u}^{0},
\mathbf{E}^{0})(t,\cdot)\|_{B^{\sigma}_{2,1}(\mathbf{R}^{N})}\nonumber\\
&\leq& C_{1}\|(n_{0}-\bar{n}, \mathbf{u}_{0},\mathbf{E}_{0},
\mathbf{B}_{0}-\overline{\mathbf{B}})\|_{B^{\sigma}_{2,1}(\mathbf{R}^{N})},\
t\geq0, \label{R-E12}
\end{eqnarray}
where $C_{1}>0$ is a uniform constant independent of $\varepsilon$.
\end{theorem}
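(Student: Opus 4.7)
The plan is to combine the uniform-in-$\varepsilon$ bound (\ref{R-E11}) of Theorem~\ref{thm1.2} specialised to $\tau=1$ with an Aubin--Lions--Simon type compactness argument in order to pass to the limit in the weak formulation of (\ref{R-E5}) and identify the limit as the Euler--Poisson system (\ref{R-E6}).

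First, I would harvest the immediate consequences of (\ref{R-E11}). The dissipation estimate $\|\varepsilon^{-1/2}\nabla\mathbf{B}^{\varepsilon}\|_{\widetilde{L}^{2}(B^{\sigma-1}_{2,1})}\le C$ yields $\|\nabla\mathbf{B}^{\varepsilon}\|_{L^{2}_{T}(B^{\sigma-1}_{2,1})}\le C\sqrt{\varepsilon}$, hence the strong convergence $\nabla\mathbf{B}^{\varepsilon}\to\mathbf{0}$. Combined with the uniform $\widetilde{L}^{\infty}(B^{\sigma}_{2,1})$ bound on $\mathbf{E}^{\varepsilon}$ itself, one also has $\sqrt{\varepsilon}\,\mathbf{E}^{\varepsilon}\to\mathbf{0}$ in $L^{\infty}_{T}(B^{\sigma}_{2,1})$. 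Banach--Alaoglu then extracts, along a subsequence, weak-$\star$ limits $\mathbf{E}^{0}$ and $\mathbf{B}^{\star}$ in $L^{\infty}_{T}(B^{\sigma}_{2,1})$; the smallness of $\nabla\mathbf{B}^{\varepsilon}$ forces $\nabla\mathbf{B}^{\star}=\mathbf{0}$, and the decay of $\mathbf{B}^{\varepsilon}-\overline{\mathbf{B}}$ at spatial infinity inherited from the Besov topology identifies $\mathbf{B}^{\star}\equiv\overline{\mathbf{B}}$.

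Next, I would establish strong compactness of $(n^{\varepsilon},\mathbf{u}^{\varepsilon})$ via Aubin--Lions. The continuity equation and the Besov product/algebra estimates supply a uniform $L^{\infty}_{T}(B^{\sigma-1}_{2,1})$ bound on $\partial_{t}n^{\varepsilon}$; dividing the momentum equation of (\ref{R-E5}) by $n^{\varepsilon}$ (which stays strictly positive when $\delta_{0}$ is small) and absorbing the harmless $\varepsilon\,\mathbf{u}^{\varepsilon}\times\mathbf{B}^{\varepsilon}$ term yields the analogous bound on $\partial_{t}\mathbf{u}^{\varepsilon}$. Together with the uniform bound in $\widetilde{L}^{\infty}(B^{\sigma}_{2,1})$ and the local compact embedding $B^{\sigma}_{2,1}(K)\hookrightarrow\hookrightarrow B^{\sigma-\delta}_{2,1}(K)$ on compact sets $K\subset\mathbf{R}^{N}$, the Aubin--Lions--Simon lemma \cite{S} extracts $(n^{\varepsilon},\mathbf{u}^{\varepsilon})\to(n^{0},\mathbf{u}^{0})$ strongly in $\mathcal{C}([0,T],(B^{\sigma-\delta}_{2,1})_{\mathrm{loc}})$ for every $T>0$ and $\delta\in(0,1)$.

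I would then pass to the limit in the weak formulation of (\ref{R-E5}). The continuity equation passes trivially. In the momentum equation, the quadratic terms $n^{\varepsilon}\mathbf{u}^{\varepsilon}\otimes\mathbf{u}^{\varepsilon}$, $\nabla P(n^{\varepsilon})$ and $n^{\varepsilon}\mathbf{u}^{\varepsilon}$ pass by strong convergence and the algebra property of $B^{\sigma-\delta}_{2,1}$; the Lorentz cross product $\varepsilon n^{\varepsilon}\mathbf{u}^{\varepsilon}\times\mathbf{B}^{\varepsilon}$ vanishes thanks to its explicit factor of $\varepsilon$ and the uniform bound on $\mathbf{B}^{\varepsilon}$; the electric-force term $n^{\varepsilon}\mathbf{E}^{\varepsilon}$ is treated as a strong$\times$weak-$\star$ product converging to $n^{0}\mathbf{E}^{0}$ in $\mathcal{D}'$. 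Testing Amp\`ere's law against $\phi\in\mathcal{C}^{\infty}_{c}$, both the $\varepsilon\partial_{t}\mathbf{E}^{\varepsilon}$ and $\varepsilon n^{\varepsilon}\mathbf{u}^{\varepsilon}$ contributions vanish and the $\nabla\times\mathbf{B}^{\varepsilon}$ contribution vanishes by the strong $L^{2}_{T}(B^{\sigma-1}_{2,1})$ convergence of $\nabla\mathbf{B}^{\varepsilon}$; testing Faraday's law and integrating the $\varepsilon\partial_{t}\mathbf{B}^{\varepsilon}$ term by parts in time yields $\nabla\times\mathbf{E}^{0}=\mathbf{0}$ distributionally; Gauss's law $\nabla\cdot\mathbf{E}^{\varepsilon}=\bar{n}-n^{\varepsilon}$ passes by strong convergence. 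This identifies $(n^{0},\mathbf{u}^{0},\mathbf{E}^{0})$ as a solution of (\ref{R-E6}); the regularity $(n^{0}-\bar{n},\mathbf{u}^{0},\mathbf{E}^{0})\in\mathcal{C}([0,\infty),B^{\sigma}_{2,1})$ and the bound (\ref{R-E12}) follow from the weak-$\star$ lower-semicontinuity of the Besov norm combined with (\ref{R-E11}), while uniqueness for Euler--Poisson in this critical framework (see \cite{FXZ}) promotes subsequential convergence to convergence of the whole family.

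The main obstacle is the passage to the limit in the electric-force coupling $n^{\varepsilon}\mathbf{E}^{\varepsilon}$ at the critical Besov regularity: since $\mathbf{E}^{\varepsilon}$ converges only weakly-$\star$, one cannot rely on the Besov algebra and must recast the product as a distributional pairing, leaning on the strong local convergence of $n^{\varepsilon}$ supplied by the Aubin--Lions step. A secondary delicate point is the identification of $\nabla\times\mathbf{E}^{0}=0$: the only available control is $\nabla\times\mathbf{E}^{\varepsilon}=-\varepsilon\partial_{t}\mathbf{B}^{\varepsilon}$, which is merely distributional, forcing Faraday's law to be tested in duality rather than bounded norm-wise, and leaving a possible initial layer in the curl component of $\mathbf{E}^{\varepsilon}$ that is absorbed by the weak-$\star$ (rather than uniform) nature of the $\mathbf{E}$-convergence.
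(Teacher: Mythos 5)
Your proposal matches the paper's argument: both extract the strong decay of $\nabla\mathbf{B}^{\varepsilon}$ and $\sqrt{\varepsilon}\,\mathbf{E}^{\varepsilon}$ directly from the uniform bound (\ref{R-E11}), obtain uniform time-derivative bounds from the equations, apply Proposition~\ref{prop2.1} and the Aubin--Lions--Simon lemma to get the strong local convergence of $(n^{\varepsilon},\mathbf{u}^{\varepsilon})$ and the weak-$\star$ convergence of $(\mathbf{E}^{\varepsilon},\mathbf{B}^{\varepsilon})$, and then pass to the limit in the sense of distributions. The only additions in your write-up are a more explicit term-by-term limit passage (in particular the strong$\times$weak-$\star$ treatment of $n^{\varepsilon}\mathbf{E}^{\varepsilon}$, and the identification of $\overline{\mathbf{B}}$ via $\nabla\mathbf{B}^{\star}=\mathbf{0}$ plus spatial decay) and the remark that uniqueness upgrades subsequential to full convergence, which the paper leaves implicit by stating the limit ``up to subsequences.''
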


Secondly, we justify the relaxation limit for the Euler-Maxwell
equations (\ref{R-E1}). To this end, we consider the Cauchy problem
for the re-scaled system (\ref{R-E8}) subject to the initial data
\begin{equation}(n^{\tau},\textbf{u}^{\tau},\mathbf{E}^{\tau},\mathbf{B}^{\tau})(0,x)
=\Big(n_{0},\frac{1}{\tau}\textbf{u}_{0},\mathbf{E}_{0},\mathbf{B}_{0}\Big)(x).\label{R-E13}
\end{equation}
It follows from Theorem \ref{thm1.2} and the ``$\mathcal{O}(1/\tau)$
time scale" (\ref{R-E7}) that there exists a unique global in-time
classical solution $(n^{\tau},\textbf{u}^{\tau},\mathbf{E}^{\tau},
\mathbf{B}^{\tau})$ to the system (\ref{R-E8}) and (\ref{R-E13}).
Then, we have
\begin{theorem}[\textbf{Relaxation limit}]\label{thm1.4}
Let $\varepsilon=1$ and $(n^{\tau}, \mathbf{u}^{\tau},
\mathbf{E}^{\tau}, \mathbf{B}^{\tau})$ be the global solution of
(\ref{R-E8}) and (\ref{R-E13}) obtained from Theorem \ref{thm1.2}.
Then, there exists a function
$(\mathcal{N},\mathcal{U},\mathcal{E})$ which is a global solution
to the drift-diffusion equations (\ref{R-E9}) satisfying
$$(\mathcal{N},\mathcal{U},\mathcal{E}) \in \mathcal{C}([0,\infty),
B^{\sigma}_{2,1}(\mathbf{R}^{N}))\times L^2([0,\infty),
B^{\sigma}_{2,1}(\mathbf{R}^{N}))\times\mathcal{C}([0,\infty),
B^{\sigma}_{2,1}(\mathbf{R}^{N}))$$ such that as $\tau\rightarrow0$,
it holds that
\begin{eqnarray*}
(n^{\tau},\tau^2\mathbf{u}^{\tau},\sqrt{\tau}\mathbf{E}^{\tau})\rightarrow
(\mathcal{N},\mathbf{0},\mathbf{0})\ \ \ \ \mbox{strongly in}\ \ \
\mathcal{C}([0,T],(B^{\sigma-\delta}_{2,1}(\mathbf{R}^{N}))_{\mathrm{loc}}),
\end{eqnarray*}
\begin{eqnarray*}
\mathbf{u}^{\tau}\rightharpoonup \mathcal{U}\ \ \ \ \mbox{weakly
 in}\ \ \ L^2_{T}(B^{\sigma}_{2,1}(\mathbf{R}^{N})),
\end{eqnarray*}
\begin{eqnarray*}
\nabla\mathbf{B}^{\tau}\rightarrow \mathbf{0}\ \ \ \ \mbox{strongly
 in}\ \ \ L^2_{T}(B^{\sigma-1}_{2,1}(\mathbf{R}^{N})),
\end{eqnarray*}
\begin{eqnarray*}
(\mathbf{E}^{\tau},\mathbf{B}^{\tau})\rightharpoonup
(\mathcal{E},\overline{\mathbf{B}})\ \ \ \ \mbox{weakly$^{\star}$
 in}\ \ \ L^\infty_{T}(B^{\sigma}_{2,1}(\mathbf{R}^{N})),
\end{eqnarray*}
for any $T>0$  and $\delta\in(0,1)$. Moreover, it yields
\begin{eqnarray}
&&\|(\mathcal{N}-\bar{n},
\mathcal{E})(t,\cdot)\|_{B^{\sigma}_{2,1}(\mathbf{R}^{N})}\nonumber\\
&\leq& C_{2}\|(n_{0}-\bar{n}, \mathbf{u}_{0},\mathbf{E}_{0},
\mathbf{B}_{0}-\overline{\mathbf{B}})\|_{B^{\sigma}_{2,1}(\mathbf{R}^{N})},\
t\geq0, \label{R-E14}
\end{eqnarray}
where $C_{2}>0$ is a uniform constant independent of $\tau$.
\end{theorem}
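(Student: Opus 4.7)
The plan is to exploit the uniform estimate (1.11) from Theorem 1.2, applied with $\varepsilon=1$ and momentum relaxation time $\tau$, together with the rescaling (1.7), to extract weak and strong limits, and then pass to the limit in each equation of (1.8). First I would translate the bounds in (1.11) to the rescaled variables $(n^\tau,\mathbf{u}^\tau,\mathbf{E}^\tau,\mathbf{B}^\tau)$. A careful time-change shows that, uniformly in $\tau\in(0,1]$ and $T>0$:
\begin{equation*}
\|(n^\tau-\bar n,\mathbf{E}^\tau,\mathbf{B}^\tau-\overline{\mathbf{B}})\|_{\widetilde{L}^\infty(B^\sigma_{2,1})}+\|\tau\mathbf{u}^\tau\|_{\widetilde{L}^\infty(B^\sigma_{2,1})}+\|\mathbf{u}^\tau\|_{\widetilde{L}^2(B^\sigma_{2,1})}\le C,
\end{equation*}
while the $\nabla\mathbf{B}$ piece of (1.11) picks up an extra $\sqrt{\tau}$, giving $\|\nabla\mathbf{B}^\tau\|_{\widetilde{L}^2(B^{\sigma-1}_{2,1})}\le C\sqrt{\tau}\to 0$. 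The announced strong convergences $\tau^2\mathbf{u}^\tau\to\mathbf{0}$ and $\sqrt{\tau}\mathbf{E}^\tau\to\mathbf{0}$ are then immediate from the $\widetilde{L}^\infty$ bounds.

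Next I would obtain time compactness. Using the continuity equation and the Besov product law (valid in the critical space $B^\sigma_{2,1}$ since $\sigma=1+N/2$), $\partial_t n^\tau=-\nabla\cdot(n^\tau\mathbf{u}^\tau)$ is uniformly bounded in $L^2(B^{\sigma-1}_{2,1})$. Combined with the uniform $\widetilde{L}^\infty(B^\sigma_{2,1})$ bound, the Aubin--Lions lemma from \cite{S} yields, up to a subsequence,
\begin{equation*}
n^\tau\longrightarrow \mathcal N\quad\text{strongly in}\quad \mathcal{C}([0,T],(B^{\sigma-\delta}_{2,1})_{\mathrm{loc}}),\ \ \delta\in(0,1).
\end{equation*}
Banach--Alaoglu then provides $\mathbf{u}^\tau\rightharpoonup\mathcal U$ weakly in $L^2_T(B^\sigma_{2,1})$ and $(\mathbf{E}^\tau,\mathbf{B}^\tau)\rightharpoonup^{\star}(\mathcal E,\overline{\mathbf{B}})$ in $L^\infty_T(B^\sigma_{2,1})$ (the $\mathbf{B}^\tau$ limit is $\overline{\mathbf{B}}$ because $\nabla\mathbf{B}^\tau\to 0$ in $L^2(B^{\sigma-1}_{2,1})$).

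With these compactness properties in hand, I would pass to the distributional limit in each equation of (1.8). The continuity equation gives $\partial_t\mathcal N+\nabla\cdot(\mathcal N\mathcal U)=0$ (strong $n^\tau$ times weak $\mathbf{u}^\tau$). In the momentum equation the inertial, convective, and Lorentz terms all carry powers of $\tau$ and vanish by the bounds of the first paragraph, leaving the Darcy-type relation
\begin{equation*}
\nabla P(\mathcal N)=-\mathcal N\mathcal E-\mathcal N\mathcal U,
\end{equation*}
whose right-hand side makes sense because $n^\tau\mathbf{E}^\tau\rightharpoonup\mathcal N\mathcal E$ and $n^\tau\mathbf{u}^\tau\rightharpoonup\mathcal N\mathcal U$ as products of strongly and weakly (weakly-$\star$) convergent sequences. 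Eliminating $\mathcal N\mathcal U$ between the two yields the parabolic equation in (1.9). Ampère's law $\tau\partial_t\mathbf{E}^\tau-\nabla\times\mathbf{B}^\tau=\tau n^\tau\mathbf{u}^\tau$ sends $\nabla\times\mathbf{B}^\tau\to 0$; Faraday's law gives $\nabla\times\mathcal E=0$; and Gauss's law passes in the limit to $\nabla\cdot\mathcal E=\bar n-\mathcal N$. Thus $(\mathcal N,\mathcal U,\mathcal E)$ solves (1.9) with initial datum $n_0$. The bound (1.14) follows from lower semi-continuity of the Besov norms under weak-$\star$ limits applied to the uniform bounds above; uniqueness of the drift-diffusion solution in the regularity class $\mathcal C([0,\infty),B^\sigma_{2,1})$ then upgrades subsequential to full convergence.

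The main obstacle is the rigorous passage to the limit in the nonlinear terms $\nabla P(n^\tau)$, $n^\tau\mathbf{u}^\tau$, $n^\tau\mathbf{E}^\tau$ within the Chemin--Lerner critical framework: the Aubin--Lions compactness for $n^\tau$ is only local in $x$, so the identification of the limit products must be carried out against compactly supported test functions and then extended by density. The $\tau$-prefactors in the stiff terms are essential — without them, the low-regularity momentum equation could not be balanced — but tracking them is already done for us by the sharp energy estimate (1.11), so the remaining work is the standard weak-compactness bookkeeping.
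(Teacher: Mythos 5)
Your proposal is correct and follows essentially the same route as the paper: apply the uniform estimate (\ref{R-E11}) after the $\mathcal{O}(1/\tau)$ time change to obtain the uniform bounds (with $n^{\tau}-\bar n$, $\mathbf{u}^{\tau}$ in $L^2_T(B^{\sigma}_{2,1})$, $\tau\mathbf{u}^{\tau}$, $\mathbf{E}^{\tau}$, $\mathbf{B}^{\tau}-\overline{\mathbf{B}}$ in $L^\infty_T(B^{\sigma}_{2,1})$, and $\nabla\mathbf{B}^{\tau}/\sqrt{\tau}$ in $L^2_T(B^{\sigma-1}_{2,1})$), deduce time-derivative bounds from the rescaled equations, and invoke Aubin--Lions together with Banach--Alaoglu to extract the limits, after which passage to the limit in each equation of (\ref{R-E8}) yields the drift-diffusion system. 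The only cosmetic difference is that you obtain $\tau^2\mathbf{u}^{\tau}\to 0$ and $\sqrt{\tau}\mathbf{E}^{\tau}\to 0$ directly from the $\widetilde L^\infty$ bounds multiplied by vanishing prefactors rather than invoking the compactness lemma for those components — this is simpler and equally valid — and you spell out the Darcy-law elimination and the strong-times-weak product identification more explicitly than the paper does.
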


Finally, what left is the combined non-relativistic and relaxation
limits for (\ref{R-E1}). From Theorem \ref{thm1.2} and (\ref{R-E13})
where the superscript $\tau$ is replaced by $(\tau,\varepsilon)$, it
is shown that there exists a unique global in-time classical
solution
$(n^{(\tau,\varepsilon)},\textbf{u}^{(\tau,\varepsilon)},\mathbf{E}^{(\tau,\varepsilon)},\mathbf{B}^{(\tau,\varepsilon)})$
to the system (\ref{R-E10}) and (\ref{R-E13}). Furthermore, we get
\begin{theorem}[\textbf{Combined non-relativistic and relaxation limits}]\label{thm1.5}
Let $(n^{(\tau,\varepsilon)}, \mathbf{u}^{(\tau,\varepsilon)},
\mathbf{E}^{(\tau,\varepsilon)}, \mathbf{B}^{(\tau,\varepsilon)})$
be the global solution of (\ref{R-E10}) and (\ref{R-E13}) obtained
from Theorem \ref{thm1.2}. Then, there exists a function
$(\mathcal{N},\mathcal{U},\mathcal{E})$ which is a global solution
to the drift-diffusion equations (\ref{R-E9}) satisfying
$$(\mathcal{N},\mathcal{U},\mathcal{E}) \in \mathcal{C}([0,\infty),
B^{\sigma}_{2,1}(\mathbf{R}^{N}))\times L^2([0,\infty),
B^{\sigma}_{2,1}(\mathbf{R}^{N}))\times\mathcal{C}([0,\infty),
B^{\sigma}_{2,1}(\mathbf{R}^{N}))$$ such that as $\tau\rightarrow0$
and $\varepsilon\rightarrow0$ simultaneously, it holds that
\begin{eqnarray*}
(n^{(\tau,\varepsilon)},\tau^2\mathbf{u}^{(\tau,\varepsilon)},\sqrt{\tau\varepsilon}\mathbf{E}^{(\tau,\varepsilon)})\rightarrow
(\mathcal{N},\mathbf{0},\mathbf{0})\ \ \ \ \mbox{strongly in}\ \ \
\mathcal{C}([0,T],(B^{\sigma-\delta}_{2,1}(\mathbf{R}^{N}))_{\mathrm{loc}}),
\end{eqnarray*}
\begin{eqnarray*}
\mathbf{u}^{(\tau,\varepsilon)}\rightharpoonup \mathcal{U}\ \ \ \
\mbox{weakly
 in}\ \ \ L^2_{T}(B^{\sigma}_{2,1}(\mathbf{R}^{N})),
\end{eqnarray*}
\begin{eqnarray*}
\nabla\mathbf{B}^{(\tau,\varepsilon)}\rightarrow \mathbf{0}\ \ \ \
\mbox{strongly
 in}\ \ \ L^2_{T}(B^{\sigma-1}_{2,1}(\mathbf{R}^{N})),
\end{eqnarray*}
\begin{eqnarray*}
(\mathbf{E}^{(\tau,\varepsilon)},\mathbf{B}^{(\tau,\varepsilon)})\rightharpoonup
(\mathcal{E},\overline{\mathbf{B}})\ \ \ \ \mbox{weakly$^{\star}$
 in}\ \ \ L^\infty_{T}(B^{\sigma}_{2,1}(\mathbf{R}^{N})),
\end{eqnarray*}
for any $T>0$ and $\delta\in(0,1)$. Moreover, it yields
\begin{eqnarray}
&&\|(\mathcal{N}-\bar{n},
\mathcal{E})(t,\cdot)\|_{B^{\sigma}_{2,1}(\mathbf{R}^{N})}\nonumber\\
&\leq& C_{3}\|(n_{0}-\bar{n}, \mathbf{u}_{0},\mathbf{E}_{0},
\mathbf{B}_{0}-\overline{\mathbf{B}})\|_{B^{\sigma}_{2,1}(\mathbf{R}^{N})},\
t\geq0, \label{R-E15}
\end{eqnarray}
where $C_{3}>0$ is a uniform constant independent of
$(\tau,\varepsilon)$.
\end{theorem}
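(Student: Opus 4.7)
The plan is to combine the uniform $(\tau,\varepsilon)$-independent bounds inherited from Theorem~\ref{thm1.2} with an Aubin--Lions compactness argument, in the same spirit as Theorems~\ref{thm1.3}--\ref{thm1.4} but carrying both singular parameters simultaneously. The first step is to translate the energy estimate (\ref{R-E11}) through the two-parameter analogue of the scaling (\ref{R-E7}) into uniform bounds on the rescaled unknowns: the time change of variables $t\mapsto t/\tau$ turns $\tau^{-1/2}\mathbf{u}$ in $L^2_s(B^\sigma_{2,1})$ into $\mathbf{u}^{(\tau,\varepsilon)}$ in $L^2_t(B^\sigma_{2,1})$, and $\varepsilon^{-1/2}\nabla\mathbf{B}$ in $L^2_s(B^{\sigma-1}_{2,1})$ into $\nabla\mathbf{B}^{(\tau,\varepsilon)}=O(\sqrt{\tau\varepsilon})$ in $L^2_t(B^{\sigma-1}_{2,1})$. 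The $L^\infty$-in-time part of (\ref{R-E11}) directly yields uniform bounds for $(n^{(\tau,\varepsilon)}-\bar{n},\mathbf{E}^{(\tau,\varepsilon)},\mathbf{B}^{(\tau,\varepsilon)}-\overline{\mathbf{B}})$ in $\widetilde{L}^\infty(B^\sigma_{2,1})$, and the trivial strong decays $\tau^{2}\mathbf{u}^{(\tau,\varepsilon)}\to\mathbf{0}$ and $\sqrt{\tau\varepsilon}\,\mathbf{E}^{(\tau,\varepsilon)}\to\mathbf{0}$ in that space follow immediately.

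The next step is compactness. Using the continuity equation $\partial_{t} n^{(\tau,\varepsilon)}=-\nabla\cdot(n^{(\tau,\varepsilon)}\mathbf{u}^{(\tau,\varepsilon)})$ together with the Besov product rule at critical regularity, the bounds $n^{(\tau,\varepsilon)}\in\widetilde L^\infty_T(B^\sigma_{2,1})$ and $\mathbf{u}^{(\tau,\varepsilon)}\in L^2_T(B^\sigma_{2,1})$ yield a uniform control of $\partial_{t} n^{(\tau,\varepsilon)}$ in $L^2_T(B^{\sigma-1}_{2,1})$. Simon's compactness theorem \cite{S} then extracts a subsequence with $n^{(\tau,\varepsilon)}\to\mathcal{N}$ strongly in $\mathcal{C}([0,T],B^{\sigma-\delta}_{2,1}(K))$ for every compact $K\subset\mathbf{R}^{N}$ and $\delta\in(0,1)$. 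Weak compactness gives $\mathbf{u}^{(\tau,\varepsilon)}\rightharpoonup\mathcal{U}$ in $L^2_T(B^\sigma_{2,1})$ and $\mathbf{E}^{(\tau,\varepsilon)}\rightharpoonup^{\star}\mathcal{E}$ in $L^\infty_T(B^\sigma_{2,1})$; the strong vanishing of $\nabla\mathbf{B}^{(\tau,\varepsilon)}$ in $L^2_T(B^{\sigma-1}_{2,1})$ combined with the spatial-decay information carried by $B^\sigma_{2,1}(\mathbf{R}^N)$ forces the weak-$\star$ limit of $\mathbf{B}^{(\tau,\varepsilon)}$ to equal the constant $\overline{\mathbf{B}}$.

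Passing to the limit in (\ref{R-E10}) is then essentially mechanical: the continuity equation yields $\partial_{t}\mathcal{N}+\nabla\cdot(\mathcal{N}\mathcal{U})=0$ by a strong--weak product; in the momentum equation every term prefactored by $\tau^{2}$ or $\tau\varepsilon$ vanishes in the distributional sense, leaving the Darcy-type identity $\nabla P(\mathcal{N})=-\mathcal{N}\mathcal{E}-\mathcal{N}\mathcal{U}$, and substituting into the limiting continuity equation produces the parabolic equation of (\ref{R-E9}); the $\tau\varepsilon\partial_{t}$ terms in Maxwell's equations disappear, so $\nabla\times\mathcal{E}=0$ and $\nabla\cdot\mathcal{E}=\bar{n}-\mathcal{N}$ are inherited. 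The estimate (\ref{R-E15}) follows from lower semicontinuity of the Besov norm under weak-$\star$ convergence applied to (\ref{R-E11}). Finally, uniqueness of solutions to the drift-diffusion system (\ref{R-E9}) in $\mathcal{C}([0,\infty),B^\sigma_{2,1})$ for small data promotes the subsequential convergence to convergence of the whole family.

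The principal obstacle is ensuring enough compactness to take the limit in the nonlinear products---especially $n^{(\tau,\varepsilon)}\mathbf{u}^{(\tau,\varepsilon)}$, $n^{(\tau,\varepsilon)}\mathbf{E}^{(\tau,\varepsilon)}$ and $\nabla P(n^{(\tau,\varepsilon)})$---when only $n^{(\tau,\varepsilon)}$ is strongly compact, and merely so on compact subsets of $\mathbf{R}^{N}$. Handling two vanishing parameters simultaneously makes the $\partial_{t} n^{(\tau,\varepsilon)}$ estimate delicate: it relies on the fact that the two-parameter rescaling of (\ref{R-E11}) still delivers a $(\tau,\varepsilon)$-independent $L^2_T(B^\sigma_{2,1})$ control on $\mathbf{u}^{(\tau,\varepsilon)}$, which is precisely what the relaxation dissipation rate provides after the $t/\tau$ rescaling. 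A secondary difficulty is identifying the weak-$\star$ limit of $\mathbf{B}^{(\tau,\varepsilon)}$ as the constant $\overline{\mathbf{B}}$: the $L^2_T(B^{\sigma-1}_{2,1})$ vanishing of $\nabla\mathbf{B}^{(\tau,\varepsilon)}$ must be combined with the spatial decay encoded in $B^\sigma_{2,1}$ in order to exclude nontrivial harmonic perturbations.
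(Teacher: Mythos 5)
Your proposal follows essentially the same route as the paper's proof of Theorem~\ref{thm1.5}: transfer the uniform estimate (\ref{R-E11}) through the $t\mapsto t/\tau$ rescaling to get $(\tau,\varepsilon)$-independent bounds on the rescaled fields and their time derivatives, extract a strongly convergent subsequence for the density via Aubin--Lions/Simon, take weak-$\star$ limits for the remaining fields, and pass to the limit in (\ref{R-E10}) in the distributional sense to recover (\ref{R-E9}) and (\ref{R-E15}). The two points you make explicit --- identifying the weak-$\star$ limit of $\mathbf{B}^{(\tau,\varepsilon)}$ as the constant $\overline{\mathbf{B}}$ by combining the vanishing of $\nabla\mathbf{B}^{(\tau,\varepsilon)}$ with the fact that a constant in $B^{\sigma}_{2,1}(\mathbf{R}^N)\subset L^2(\mathbf{R}^N)$ must vanish, and invoking uniqueness for the limit system to upgrade subsequential to full-family convergence --- are correct and fill in details the paper states without elaboration.
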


\begin{remark}\label{rem1.4}
\rm To the best of our knowledge, these limit results (Theorems
\ref{thm1.3}-\ref{thm1.5}) show the convergence \textit{globally} in
time, which have not been appeared in the published literatures. In
comparison with that in \cite{PW1,PW2,PW3}, they hold true in the
functional spaces with relatively \textit{lower} regularity, which
can be regarded as a supplement to the theory of singular limits for
the Euler-Maxwell equations (\ref{R-E1}). Let us mention that the
combined non-relativistic and relaxation limits obtained in Theorem
\ref{thm1.5} does not require any (communication) restriction
between $\tau$ and $\varepsilon$. That is, one can fix any of the
two parameters $\tau$ and $\varepsilon$ and let the other tends to
zero, which is the \textit{genuinely} combined limits.
\end{remark}

\begin{remark}\label{rem1.5}
\rm It is worth noting that Chemin-Lerner's spaces are first
introduced to establish the uniform \textit{a priori} estimates with
respect to $(\tau,\varepsilon)$ and justify the combined limits. As
a matter fact, this approach developed by the current paper can be
applied to study other limit problems with two (or more) independent
singular parameters.
\end{remark}

\begin{remark}\label{rem1.6}
\rm There is no additional conceptual difficulty in considering the
temperature effects and the corresponding balance equation
(\textit{i.e.} non-isentropic Euler-Maxwell equations), although the
estimates are quite tedious.
\end{remark}

The rest of this paper unfolds as follows. In Sect.~\ref{sec:2}, we
introduce the Littlewood-Paley decomposition and recall the
definitions and some useful results on Besov spaces and
Chemin-Lerner's spaces. Sect.~\ref{sec:3} is devoted to the proofs
of main results, which is divided into five subsections for clarity.
In Sect.~\ref{sec:3.1}, we first rewrite the Euler-Maxwell equations
(\ref{R-E1}) as a symmetric hyperbolic system in order to obtain the
effective \textit{a priori} estimate by using Fourier frequency
localization. Furthermore, we give the local existence of classical
solutions in Chemin-Lerner's spaces with critical regularity. Then
in Sect.~\ref{sec:3.2}, we deduce a new uniform \textit{a priori}
estimate under some smallness assumption, which is used to achieve
the (uniform) global existence of classical solutions.
Sect.~\ref{sec:3.3}, Sect.~\ref{sec:3.4} and Sect.~\ref{sec:3.5} are
in turn dedicated to the justification of the non-relativistic
limit, relaxation limit as well as combined non-relativistic and
relaxation limits of Euler-Maxwell equations.

\textbf{Notations}. Throughout the paper, $C$ stands for a uniform
positive constant with respect to $(\tau,\varepsilon)$. The notation
$f\approx g$ means that $f\leq Cg$ and $g\leq Cf$. Denote by
$\mathcal{C}([0,T],X)$ (resp., $\mathcal{C}^{1}([0,T],X)$) the space
of continuous (resp., continuously differentiable) functions on
$[0,T]$ with values in a Banach space $X$. For simplicity, the
notation $\|(f,g,h,k)\|_{X}$ means $
\|f\|_{X}+\|g\|_{X}+\|h\|_{X}+\|k\|_{X}$, where $f,g,h,k\in X$. We
omit the space dependence, since all functional spaces (in $x$) are
considered in $\mathbf{R}^{N}$. Moreover, the integral
$\int_{\mathbf{R}^{N}}fdx$ is labeled as $\int f$ without any
ambiguity.

\section{Tools}\label{sec:2}
The proofs of most of the results presented in this paper require a
dyadic decomposition of Fourier variable. Let us recall briefly the
Littlewood-Paley decomposition theory and the characterization of
Besov spaces and Chemin-Lerner's spaces, see for instance \cite{C,D}
for details.

Let ($\varphi, \chi)$ be a couple of smooth functions valued in [0,
1] such that $\varphi$ is supported in the shell
$\mathcal{C}(0,\frac{3}{4},\frac{8}{3})=\{\xi\in\mathbf{R}^{N}|\frac{3}{4}\leq|\xi|\leq\frac{8}{3}\}$,
$\chi$ is supported in the ball $\mathcal{B}(0,\frac{4}{3})=
\{\xi\in\mathbf{R}^{N}||\xi|\leq\frac{4}{3}\}$ and
$$
\chi(\xi)+\sum_{q=0}^{\infty}\varphi(2^{-q}\xi)=1,\ \ \ \ q\in
\mathbf{Z},\ \  \xi\in\mathbf{R}^{N}.
$$
Let $\mathcal{S'}$ be the dual space of the Schwartz class
$\mathcal{S}$. For $f\in\mathcal{S'}$, the nonhomogeneous dyadic
blocks are defined as follows
$$
\Delta_{-1}f:=\chi(D)f=\tilde{h}\ast f\ \ \ \mbox{with}\ \
\tilde{h}=\mathcal{F}^{-1}\chi,
$$
$$
\Delta_{q}f:=\varphi(2^{-q}D)f=2^{qd}\int h(2^{q}y)f(x-y)dy\ \ \
\mbox{with}\ \ h=\mathcal{F}^{-1}\varphi,\ \ \mbox{if}\ \ q\geq0.
$$
Here $\ast, \ \ \mathcal{F}^{-1} $ represent the convolution
operator and the inverse Fourier transform, respectively. Note that
$\tilde{h}, h\in\mathcal{S}$. The nonhomogeneous Littlewood-Paley
decomposition is
$$
f=\sum_{q \geq-1}\Delta_{q}f \ \ \ \mbox{in}\ \ \ \mathcal{S'}.
$$
Define the low frequency cut-off by
$$
S_{q}f:=\sum_{p\leq q-1}\Delta_{p}f.
$$
According to the above Littlewood-Paley decomposition, thus we
introduce the explicit definition of Besov spaces.
\begin{definition}\label{defn2.1}
Let $1\leq p\leq\infty$ and $s\in \mathbf{R}$. For $1\leq r<\infty$,
the Besov spaces  $B^{s}_{p,r}$ are defined by
$$f\in B^{s}_{p,r} \Leftrightarrow \Big(\sum_{q\geq-1}(2^{qs}\|\Delta_{q}f\|_{L^{p}})^{r}\Big)^{\frac{1}{r}}<\infty$$
and $B^{s}_{p,\infty}$ are defined by
$$f\in B^{s}_{p,\infty} \Leftrightarrow \sup_{q\geq-1}2^{qs}\|\Delta_{q}f\|_{L^{p}}<\infty.$$
\end{definition}
Let us point out that the definition of $B^{s}_{p,r}$ does not
depend on the choice of the Littlewood-Paley decomposition. Now, we
state some classical conclusions, which will be used in subsequent
analysis. The first one is Bernstein's inequality.
\begin{lemma}\label{lem2.1}
Let $k\in\mathbf{N}$ and $0<R_{1}<R_{2}$. There exists a constant
$C$, depending only on $R_{1},R_{2}$ and $N$, such that for all
$1\leq a\leq b\leq\infty$ and $f\in L^{a}$, we have
$$
\mathrm{Supp}\ \mathcal{ F}f\subset
\mathcal{B}(0,R_{1}\lambda)\Rightarrow\sup_{|\alpha|=k}\|\partial^{\alpha}f\|_{L^{b}}\leq
C^{k+1}\lambda^{k+N(\frac{1}{a}-\frac{1}{b})}\|f\|_{L^{a}};$$
$$\mathrm{Supp}\ \mathcal{F}f\subset \mathcal{C}(0,R_{1}\lambda,R_{2}\lambda)\Rightarrow C^{-k-1}\lambda^{k}\|f\|_{L^{a}}\leq \sup_{|\alpha|=k}\|\partial^{\alpha}f\|_{L^{a}}\leq C^{k+1}\lambda^{k}\|f\|_{L^{a}},$$
where $\mathcal{F}f$ represents the Fourier transform on $f$.
\end{lemma}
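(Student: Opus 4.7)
The plan is to prove both bounds by a standard scaling-and-convolution argument. First I would fix two auxiliary cutoffs once and for all: a Schwartz function $\tilde{\chi}\in C^{\infty}_{c}(\mathbf{R}^{N})$ equal to $1$ on $\mathcal{B}(0,R_{1})$ and supported in a slightly larger ball, and $\tilde{\varphi}\in C^{\infty}_{c}(\mathbf{R}^{N})$ equal to $1$ on the annulus $\mathcal{C}(0,R_{1},R_{2})$ and supported in a slightly larger annulus not touching the origin. Since $\mathrm{Supp}\,\mathcal{F}f\subset\mathcal{B}(0,R_{1}\lambda)$, the identity $\tilde{\chi}(\xi/\lambda)\mathcal{F}f(\xi)=\mathcal{F}f(\xi)$ holds on the Fourier side, which transfers on the physical side to the convolution representation $f=\lambda^{N}\tilde{h}(\lambda\cdot)\ast f$ with $\tilde{h}=\mathcal{F}^{-1}\tilde{\chi}\in\mathcal{S}$; the analogous identity with $h_{*}=\mathcal{F}^{-1}\tilde{\varphi}$ holds under the annular hypothesis.

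For the first inequality, I would differentiate the convolution representation to obtain
\[
\partial^{\alpha}f=\lambda^{N+|\alpha|}(\partial^{\alpha}\tilde{h})(\lambda\cdot)\ast f,
\]
and then apply Young's convolution inequality with the exponent relation $1+\tfrac{1}{b}=\tfrac{1}{a}+\tfrac{1}{c}$. This gives
\[
\|\partial^{\alpha}f\|_{L^{b}}\leq\lambda^{|\alpha|+N(\tfrac{1}{a}-\tfrac{1}{b})}\,\|\partial^{\alpha}\tilde{h}\|_{L^{c}}\,\|f\|_{L^{a}},
\]
after absorbing the rescaling factor $\lambda^{N(1-1/c)}=\lambda^{N(1/a-1/b)}$ produced by the $L^{c}$-norm of $\lambda^{N}(\partial^{\alpha}\tilde{h})(\lambda\cdot)$. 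Because $\tilde{h}\in\mathcal{S}$, a direct estimate on $(1+|x|)^{N+1}\partial^{\alpha}\tilde{h}(x)$ furnishes the uniform-in-$\alpha$ bound $\|\partial^{\alpha}\tilde{h}\|_{L^{c}}\leq C^{k+1}$ for $|\alpha|=k$, and taking the supremum yields the first statement.

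For the annular inequality, the upper bound is the argument above applied with $a=b$ and with $h_{*}$ in place of $\tilde{h}$. For the reverse inequality I would exploit the fact that on $\mathrm{Supp}\,\tilde{\varphi}$ one has $|\xi|\geq R_{1}>0$, so the symbol $(i\xi)^{\alpha}$ is invertible in the following sense: for each multi-index $\alpha$ with $|\alpha|=k$ define $g_{\alpha}$ by
\[
\hat{g}_{\alpha}(\xi):=\frac{\overline{(i\xi)^{\alpha}}}{\sum_{|\beta|=k}|\xi^{\beta}|^{2}}\,\tilde{\varphi}(\xi),
\]
which is smooth and compactly supported in an annulus away from the origin, hence $g_{\alpha}\in\mathcal{S}$; by construction $\sum_{|\alpha|=k}\hat{g}_{\alpha}(\xi)(i\xi)^{\alpha}=\tilde{\varphi}(\xi)$. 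Rescaling by $\lambda$ yields the reconstruction formula $f=\sum_{|\alpha|=k}\lambda^{N-k}g_{\alpha}(\lambda\cdot)\ast\partial^{\alpha}f$, and one application of Young's inequality produces the desired lower bound $\|f\|_{L^{a}}\leq C^{k+1}\lambda^{-k}\sup_{|\alpha|=k}\|\partial^{\alpha}f\|_{L^{a}}$.

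The only substantive technical point is the uniform control $\|\partial^{\alpha}\tilde{h}\|_{L^{c}},\|g_{\alpha}\|_{L^{1}}\leq C^{k+1}$ for $|\alpha|=k$, from which the advertised dependence $C=C(R_{1},R_{2},N)$ originates; once this is in place, the argument is pure dimensional analysis combined with Young's inequality, with no interaction with the ambient PDE problem.
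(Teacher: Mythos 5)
The paper does not prove Lemma~\ref{lem2.1}; it simply records Bernstein's inequality as a classical fact and points the reader to the references on Littlewood--Paley theory, so there is no ``paper proof'' to compare against. Your argument is the standard textbook proof (essentially what one finds in Chemin's or Danchin's notes): introduce a smooth Fourier cutoff adapted to the ball or the annulus, write the reproducing convolution identity $f=\lambda^{N}\tilde h(\lambda\cdot)\ast f$, differentiate, and apply Young's inequality; for the reverse annular bound, invert the symbol $(i\xi)^{\alpha}$ on the annulus via $\hat g_{\alpha}=\overline{(i\xi)^{\alpha}}\,\big(\sum_{|\beta|=k}\xi^{2\beta}\big)^{-1}\tilde\varphi(\xi)$ and use the resulting reconstruction formula. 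The scalings check out ($\lambda^{N(1-1/c)}=\lambda^{N(1/a-1/b)}$ under Young's relation, and $\lambda^{-k}$ appearing in the reconstruction), and the partition identity $\sum_{|\alpha|=k}\hat g_{\alpha}(i\xi)^{\alpha}=\tilde\varphi$ is correct. The one place you wave your hands is the claimed uniform bound $\|\partial^{\alpha}\tilde h\|_{L^{c}},\|g_{\alpha}\|_{L^{1}}\le C^{k+1}$: this does hold (integrate by parts against $(1+|x|^{2})^{M}$ and use that $\xi^{\alpha}$ is bounded by $R^{k}$ on the compact support, and that $\sum_{|\beta|=k}\xi^{2\beta}\ge N^{-k}|\xi|^{2k}$), but getting the geometric rate $C^{k+1}$ rather than something worse requires tracking how the polynomial factors from differentiating $\xi^{\alpha}$ and the multinomial comparison constants depend on $k$; you flag this as the substantive point, which is fair, but in a complete write-up it would need to be carried out. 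Also note that the count of multi-indices of length $k$, $\binom{k+N-1}{N-1}$, must be absorbed into $C^{k+1}$ in the lower-bound step; this is fine since it grows only polynomially in $k$.
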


The second one is a compactness result for Besov spaces.
\begin{proposition}\label{prop2.1}
Let $1\leq p,r\leq \infty,\ s\in \mathbf{R}$ and $\varepsilon>0$.
For all $\phi\in C_{c}^{\infty}$, the map $f\mapsto\phi f$ is
compact from $B^{s+\varepsilon}_{p,r}$ to $B^{s}_{p,r}$.
\end{proposition}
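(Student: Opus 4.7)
The strategy is to pull $\phi$ inside the norm and show that, for any bounded sequence $(f_n)$ in $B^{s+\varepsilon}_{p,r}$, the images $g_n := \phi f_n$ admit a convergent subsequence in $B^s_{p,r}$. I would rely on two complementary ingredients: (i) the high-frequency part of $g_n$ is uniformly small in $B^s_{p,r}$ thanks to the $\varepsilon$-gain of regularity; and (ii) the low-frequency part, being spectrally localized and (because of $\phi$) sharply concentrated in space, is precompact in $L^p$, hence in $B^s_{p,r}$.

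First, since $\phi\in C_c^\infty$, multiplication by $\phi$ is continuous on every Besov space (by a standard paraproduct argument for Schwartz multipliers), so $(g_n)$ is bounded in $B^{s+\varepsilon}_{p,r}$ and each $g_n$ is supported in the fixed compact set $\mathrm{K}:=\mathrm{supp}\,\phi$. Splitting $g_n = S_Q g_n + (\mathrm{Id}-S_Q)g_n$, the high-frequency tail is controlled directly from Definition \ref{defn2.1}:
$$\|(\mathrm{Id}-S_Q)g_n\|_{B^s_{p,r}} \leq C\,2^{-Q\varepsilon}\|g_n\|_{B^{s+\varepsilon}_{p,r}},$$
which tends to $0$ uniformly in $n$ as $Q\to\infty$ (the same estimate being valid for $r=\infty$).

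Then, for fixed $Q$, I would analyze $(S_Q g_n)_n$. Since $S_Q$ is convolution with a Schwartz kernel $K_Q$ and $g_n$ is supported in $\mathrm{K}$, the pointwise bound
$$|S_Q g_n(x)| \leq \|g_n\|_{L^1}\,\sup_{y\in\mathrm{K}}|K_Q(x-y)|$$
gives Schwartz decay in $x$, uniformly in $n$. Hence for a cutoff $\psi_R$ equal to $1$ on $B(0,R)$ and supported in $B(0,2R)$, the tail $\|(1-\psi_R)S_Q g_n\|_{L^p}$ is arbitrarily small by choosing $R$ large. On the ball $B(0,2R)$, Bernstein's inequality (Lemma \ref{lem2.1}) bounds $(S_Q g_n)_n$ in $C^1$, so the Rellich--Kondrachov theorem (or Arzel\`a--Ascoli when $p=\infty$) extracts a subsequence convergent in $L^p(B(0,2R))$. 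Combined with the tail smallness, a subsequence converges in $L^p(\mathbf{R}^N)$, and since its spectrum lies in a ball of radius $\sim 2^Q$, Bernstein upgrades this to convergence in $B^s_{p,r}$.

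A standard diagonal extraction over $Q=1,2,3,\ldots$ then yields a single subsequence $(g_{n_k})$ along which $S_Q g_{n_k}$ converges in $B^s_{p,r}$ for every $Q$; together with the uniform high-frequency smallness, the triangle inequality shows $(g_{n_k})$ is Cauchy in $B^s_{p,r}$, which gives the compactness. The main obstacle, and the step most in need of care, is controlling the spatial tail of the low-frequency part on the unbounded domain $\mathbf{R}^N$: spectral localization alone does not prevent mass from escaping to infinity, and without the compact support of $\phi$ the Rellich--Kondrachov step would fail. It is precisely the interplay between the Schwartz kernel $K_Q$ and $\mathrm{supp}(g_n)\subset\mathrm{K}$ that restores tightness and makes the argument go through.
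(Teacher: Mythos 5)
The paper itself states Proposition \ref{prop2.1} without proof, as a classical fact drawn from the references on Littlewood--Paley theory, so there is no in-paper argument to compare against; what you have written is in fact the standard route (high/low frequency splitting, spatial decay from the compact support, Ascoli, diagonal extraction). The overall structure is sound, and the two main ingredients --- uniform smallness of $(\mathrm{Id}-S_Q)g_n$ in $B^s_{p,r}$ from the $\varepsilon$-gain, and compactness of the spectrally-localized, spatially-tight part $S_Q g_n$ --- are exactly the right ones.

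There is, however, one genuine gap. The pointwise bound
\[
|S_Q g_n(x)| \leq \|g_n\|_{L^1}\,\sup_{y\in\mathrm{K}}|K_Q(x-y)|
\]
requires $g_n=\phi f_n$ to lie in $L^1$, uniformly in $n$. This holds when $B^{s+\varepsilon}_{p,r}$ embeds into $L^1_{\mathrm{loc}}$ (e.g.\ when $s+\varepsilon>0$, or $s+\varepsilon=0$ with $r=1$), but the proposition is asserted for all $s\in\mathbf{R}$, and for $s+\varepsilon\leq 0$ the elements of $B^{s+\varepsilon}_{p,r}$ are genuinely distributions: $\|g_n\|_{L^1}$ is then undefined, and the displayed inequality collapses. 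The repair is to estimate the distributional pairing $S_Q g_n(x)=\langle g_n,\,K_Q(x-\cdot)\rangle$ directly: pick a cutoff $\chi\in C_c^\infty$ equal to $1$ near $\mathrm{K}$ so that $g_n=\chi g_n$, and bound
\[
|S_Q g_n(x)| = |\langle g_n,\,\chi\,K_Q(x-\cdot)\rangle|
\leq C\,\|g_n\|_{B^{s+\varepsilon}_{p,r}}\,\|\chi\,K_Q(x-\cdot)\|_{B^{-(s+\varepsilon)}_{p',r'}},
\]
and then observe that, because $K_Q$ is Schwartz, $\|\chi\,K_Q(x-\cdot)\|_{B^{-(s+\varepsilon)}_{p',r'}}$ (which is dominated by finitely many derivatives of $K_Q(x-\cdot)$ on the fixed compact $\mathrm{supp}\,\chi$) decays rapidly in $|x|$, uniformly in $n$. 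This recovers the tightness you need before invoking Ascoli. Two smaller remarks: the invocation of ``Rellich--Kondrachov'' is superfluous once you have uniform $C^1$ bounds on a compact set --- Arzel\`a--Ascoli plus the finite measure of $B(0,2R)$ already gives $L^p$-convergence; and the final passage from $L^p$- to $B^s_{p,r}$-convergence does not really use Bernstein, only that $S_Q g_n$ has finitely many nonzero dyadic blocks, each of which is bounded on $L^p$.
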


On the other hand, the study of non-stationary partial differential
equations requires spaces of type $L^{\rho}_{T}(X):=L^{\rho}(0,T;X)$
for appropriate Banach spaces $X$. In our case, $X$ is expected to
be a Besov space, so the fundamental idea is to localize the
equations through the Littlewood-Paley decomposition. Then it is
easy to obtain $L^{\rho}_{T}(L^{p})$ estimates for each dyadic
block. Performing a (weighted) $\ell^r$ summation is the most
natural next step. But, in doing so, we get bounds in spaces which
are not type $L^{\rho}_{T}(B^{s}_{p,r})$ (except if $\rho=r$). This
leads to the definition of Chemin-Lerner's spaces first introduced
by J.-Y. Chemin and N. Lerner \cite{C2}, which is the refinement of
the spaces $L^{\rho}_{T}(B^{s}_{p,r})$.

\begin{definition}\label{defn2.2}
For $T>0, s\in\mathbf{R}, 1\leq r,\rho\leq\infty$, set (with the
usual convention if $r=\infty$)
$$\|f\|_{\widetilde{L}^{\rho}_{T}(B^{s}_{p,r})}:
=\Big(\sum_{q\geq-1}(2^{qs}\|\Delta_{q}f\|_{L^{\rho}_{T}(L^{p})})^{r}\Big)^{\frac{1}{r}}.$$
Then we define the space $\widetilde{L}^{\rho}_{T}(B^{s}_{p,r})$ as
the completion of $\mathcal{S}$ over $(0,T)\times\mathbf{R}^{N}$ by
the above norm.
\end{definition}

Furthermore, we define
$$\widetilde{\mathcal{C}}_{T}(B^{s}_{p,r}):=\widetilde{L}^{\infty}_{T}(B^{s}_{p,r})\cap\mathcal{C}([0,T],B^{s}_{p,r})
$$ and $$\widetilde{\mathcal{C}}^1_{T}(B^{s}_{p,r}):=\{f\in\mathcal{C}^1([0,T],B^{s}_{p,r})|\partial_{t}f\in\widetilde{L}^{\infty}_{T}(B^{s}_{p,r})\}.$$
The index $T$ will be omitted when $T=+\infty$. Let us emphasize
that

\begin{remark}\label{rem2.1}
\rm According to Minkowski's inequality, it holds that
$$\|f\|_{\widetilde{L}^{\rho}_{T}(B^{s}_{p,r})}\leq\|f\|_{L^{\rho}_{T}(B^{s}_{p,r})}\,\,\,
\mbox{if}\,\, r\geq\rho,\ \ \ \ \ \
\|f\|_{\widetilde{L}^{\rho}_{T}(B^{s}_{p,r})}\geq\|f\|_{L^{\rho}_{T}(B^{s}_{p,r})}\,\,\,
\mbox{if}\,\, r\leq\rho.
$$\end{remark}
Then, we state the property of continuity for product in
Chemin-Lerner's spaces $\widetilde{L}^{\rho}_{T}(B^{s}_{p,r})$.
\begin{proposition}\label{prop2.2}
The following estimate holds:
$$
\|fg\|_{\widetilde{L}^{\rho}_{T}(B^{s}_{p,r})}\leq
C(\|f\|_{L^{\rho_{1}}_{T}(L^{\infty})}\|g\|_{\widetilde{L}^{\rho_{2}}_{T}(B^{s}_{p,r})}
+\|g\|_{L^{\rho_{3}}_{T}(L^{\infty})}\|f\|_{\widetilde{L}^{\rho_{4}}_{T}(B^{s}_{p,r})})
$$
whenever $s>0, 1\leq p\leq\infty,
1\leq\rho,\rho_{1},\rho_{2},\rho_{3},\rho_{4}\leq\infty$ and
$$\frac{1}{\rho}=\frac{1}{\rho_{1}}+\frac{1}{\rho_{2}}=\frac{1}{\rho_{3}}+\frac{1}{\rho_{4}}.$$
As a direct corollary, one has
$$\|fg\|_{\widetilde{L}^{\rho}_{T}(B^{s}_{p,r})}
\leq
C\|f\|_{\widetilde{L}^{\rho_{1}}_{T}(B^{s}_{p,r})}\|g\|_{\widetilde{L}^{\rho_{2}}_{T}(B^{s}_{p,r})}$$
whenever $s\geq N/p,
\frac{1}{\rho}=\frac{1}{\rho_{1}}+\frac{1}{\rho_{2}}.$
\end{proposition}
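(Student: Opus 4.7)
The plan is to employ Bony's paraproduct decomposition
\[
fg = T_f g + T_g f + R(f,g),
\]
with the paraproduct $T_f g := \sum_{q'} S_{q'-1}f\, \Delta_{q'}g$ and the remainder $R(f,g) := \sum_{|j-j'|\leq 1}\Delta_j f\, \Delta_{j'} g$, and to estimate the contribution of each of the three pieces to $\Delta_q(fg)$ in $L^{\rho}_{T}(L^{p})$ before multiplying by $2^{qs}$ and performing the weighted $\ell^r_q$ summation dictated by Definition \ref{defn2.2}. The two admissible Hölder pairs $(\rho_1,\rho_2)$ and $(\rho_3,\rho_4)$ in the statement will arise naturally from the two paraproducts, while the remainder can be absorbed into either of them.

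For $T_f g$, spectral localization shows that $S_{q'-1}f\, \Delta_{q'}g$ is supported in an annulus of size $2^{q'}$, so only terms with $|q'-q|\leq N_0$ (some absolute constant) contribute to $\Delta_q(T_f g)$. I would then apply Hölder in space,
\[
\|S_{q'-1}f\, \Delta_{q'}g\|_{L^p} \leq \|S_{q'-1}f\|_{L^\infty}\|\Delta_{q'}g\|_{L^p} \leq \|f\|_{L^\infty}\|\Delta_{q'}g\|_{L^p},
\]
followed by Hölder in time with $1/\rho = 1/\rho_1 + 1/\rho_2$. Multiplying by $2^{qs}$, using $|q-q'|\leq N_0$ to replace $2^{qs}$ by $2^{q's}$ up to an absolute constant, and summing in $\ell^r_q$ reproduces the $\widetilde{L}^{\rho_2}_{T}(B^s_{p,r})$ norm of $g$, yielding the first term on the right-hand side. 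The symmetric argument with roles exchanged and the pair $(\rho_3,\rho_4)$ handles $T_g f$.

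The remainder is where the hypothesis $s>0$ enters decisively: the Fourier support of $\Delta_j f\, \widetilde{\Delta}_j g$ sits in a \emph{ball} of radius $\lesssim 2^j$ rather than an annulus, so $\Delta_q R(f,g)$ collects \emph{all} indices $j \geq q-N_0$. Using $\|\Delta_j f\, \widetilde{\Delta}_j g\|_{L^p} \leq \|\Delta_j f\|_{L^\infty}\|\widetilde{\Delta}_j g\|_{L^p}$ and writing $2^{qs} = 2^{(q-j)s}\cdot 2^{js}$, the summation over $j \geq q-N_0$ becomes a discrete convolution against the kernel $\{2^{ks}\mathbf{1}_{\{k\leq N_0\}}\}_{k\in\mathbf{Z}}$, which belongs to $\ell^1(\mathbf{Z})$ \emph{precisely} because $s>0$. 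Young's inequality for $\ell^r$ sequences then closes the estimate. I expect this tail-summability step to be the main technical obstacle, and it is the sole reason the exponent $s$ may not be taken to vanish.

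For the corollary, I would simply embed $L^{\rho_i}_T(L^\infty)$ into $\widetilde{L}^{\rho_i}_T(B^s_{p,r})$ using the standard continuous inclusion $B^s_{p,r}\hookrightarrow L^\infty$ valid for $s \geq N/p$ (taking $r=1$ in the borderline case $s=N/p$), which is itself a routine consequence of Bernstein's inequality (Lemma \ref{lem2.1}) and the definition of the Besov norm. Plugging this embedding into the bilinear estimate just proved yields the stated corollary and completes the plan.
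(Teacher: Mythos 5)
The paper states Proposition \ref{prop2.2} without proof, citing it as a known result from the Littlewood--Paley/Chemin--Lerner literature (\cite{C,C2,D}); your Bony paraproduct argument is exactly the standard proof those references give. Your treatment of all three pieces is correct: the two paraproducts only see a finite band $|q'-q|\leq N_0$ and split the time exponent by H\"older, while the remainder requires the full tail sum over $j\geq q-N_0$ and it is precisely there that $s>0$ enters via the $\ell^1$-summability of $\{2^{ks}\}_{k\leq N_0}$ and Young's inequality in $\ell^r$. Your remark on the corollary is also accurate and slightly sharper than the paper's phrasing: the embedding $B^s_{p,r}\hookrightarrow L^\infty$ (hence $\widetilde{L}^{\rho_i}_T(B^s_{p,r})\hookrightarrow L^{\rho_i}_T(L^\infty)$ after Minkowski) requires $s>N/p$ or the borderline combination $s=N/p$ with $r=1$; the paper only ever invokes it with $r=1$, so nothing is lost, but the blanket "$s\geq N/p$" in the statement is indeed imprecise for $r>1$, and you were right to flag it.
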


In addition, the estimates of commutators in
$\widetilde{L}^{\rho}_{T}(B^{s}_{p,1})$ spaces are also frequently
used in the subsequent analysis. The indices $s,p$ behave just as in
the stationary case \cite{D,FXZ} whereas the time exponent $\rho$
behaves according to H\"{o}lder inequality.
\begin{lemma}\label{lem2.2}
Let $1<p<\infty$ and $1\leq \rho\leq\infty$,  then the following
inequalities are true:
\begin{eqnarray*}
&&2^{qs}\|[f,\Delta_{q}]\mathcal{A}g\|_{L^{\rho}_{T}(L^{p})}\nonumber\\&\leq&
\left\{
\begin{array}{l}
 Cc_{q}\|f\|_{\widetilde{L}^{\rho_{1}}_{T}(B^{s}_{p,1})}\|g\|_{\widetilde{L}^{\rho_{2}}_{T}(B^{s}_{p,1})},\
\ s=1+N/p,\\
 Cc_{q}\|f\|_{\widetilde{L}^{\rho_{1}}_{T}(B^{s}_{p,1})}\|g\|_{\widetilde{L}^{\rho_{2}}_{T}(B^{s+1}_{p,1})},\
 \ s=N/p,\\
 Cc_{q}\|f\|_{\widetilde{L}^{\rho_{1}}_{T}(B^{s+1}_{p,1})}\|g\|_{\widetilde{L}^{\rho_{2}}_{T}(B^{s}_{p,1})},\ \
s=N/p,
\end{array} \right.
\end{eqnarray*}
where the commutator $[\cdot,\cdot]$ is defined by $[f,g]=fg-gf$,
the operator $\mathcal{A}=\mathrm{div}$ or $\mathrm{\nabla}$, $C$ is
a harmless constant, and $c_{q}$ denotes a sequence such that
$\|(c_{q})\|_{ {l^{1}}}\leq
1,\frac{1}{\rho}=\frac{1}{\rho_{1}}+\frac{1}{\rho_{2}}.$
\end{lemma}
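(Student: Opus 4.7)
The plan is to adapt the classical (time-independent) commutator estimate in Besov spaces to the Chemin-Lerner framework. Since, as the author points out, the spatial argument is identical to that in the stationary case treated in \cite{D,FXZ}, while the time-variable contributes only through H\"older's inequality with exponent split $1/\rho = 1/\rho_1 + 1/\rho_2$, the whole proof reduces essentially to a refinement of the known spatial estimate plus careful book-keeping of the time exponents.

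For the spatial estimate, I would appeal to Bony's paraproduct decomposition. Writing $f\mathcal{A}g = T_f \mathcal{A}g + T_{\mathcal{A}g}f + R(f,\mathcal{A}g)$ and an analogous decomposition for $f\,\Delta_q \mathcal{A}g$, the commutator $[f,\Delta_q]\mathcal{A}g = f\Delta_q\mathcal{A}g - \Delta_q(f\mathcal{A}g)$ splits naturally into three pieces:
$$[f,\Delta_q]\mathcal{A}g \;=\; [T_f,\Delta_q]\mathcal{A}g \;+\; \bigl(T_{\Delta_q \mathcal{A}g} f - \Delta_q T_{\mathcal{A}g} f\bigr) \;+\; \bigl(R(f,\Delta_q \mathcal{A}g) - \Delta_q R(f,\mathcal{A}g)\bigr).$$
The principal term $[T_f,\Delta_q]\mathcal{A}g$ is expressed as the finite sum $\sum_{|q'-q|\leq N_0}[S_{q'-1}f,\Delta_q]\varphi(2^{-q'}D)\mathcal{A}g$; applying a first-order Taylor expansion inside the convolution representation of $\Delta_q$ produces the key gain $2^{-q}\|\nabla S_{q'-1} f\|_{L^\infty}$ multiplied by a spectrally localized piece of $\mathcal{A}g$. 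The two remaining differences are handled by the spectral support properties of the paraproduct and remainder (which restrict the effective dyadic indices to a bounded window around $q$), combined with the standard continuity estimates for $T$ and $R$ on Besov spaces.

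The three cases then reflect how one distributes regularity so that the gain $2^{-q}$ from the Taylor step balances the one derivative carried by $\mathcal{A}g$. In the case $s=1+N/p$ both $f$ and $g$ live in $B^{s}_{p,1}$, and the embedding $B^{1+N/p}_{p,1}\hookrightarrow \mathrm{Lip}$ furnishes the $L^\infty$-control of $\nabla f$ directly. In the two critical cases $s=N/p$ this embedding just fails, and one must pay for the missing half-derivative by placing either $f$ in $B^{s+1}_{p,1}$ or $g$ in $B^{s+1}_{p,1}$; since the paraproduct and remainder split $f$ and $g$ asymmetrically, each choice produces a distinct inequality (the second and third bounds stated in the lemma).

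Once each piece has been bounded at fixed time by a product of two spatial Besov norms of $f$ and $g$, H\"older's inequality in $t$ with $1/\rho = 1/\rho_1 + 1/\rho_2$ upgrades the estimate to Chemin-Lerner norms, and the sequence $(c_q)$ arises from the localization windows in $q'$ together with the $\ell^1$-summability built into $B^{s}_{p,1}$. The main obstacle I foresee is not analytic but organizational: one must verify in the two asymmetric cases that the extra derivative is always assigned to the correct factor, so that both $T_{\Delta_q \mathcal{A}g}f - \Delta_q T_{\mathcal{A}g}f$ and $R(f,\Delta_q \mathcal{A}g) - \Delta_q R(f,\mathcal{A}g)$ can be controlled \emph{without} forfeiting either the $\ell^1$-summability in $q$ or the H\"older splitting of the time exponent.
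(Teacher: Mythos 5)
Your proposal is correct and follows essentially the same route as the paper: the paper does not prove Lemma \ref{lem2.2} but simply invokes the stationary commutator estimates of \cite{D,FXZ} together with the remark that the time exponent $\rho$ obeys H\"older's inequality, and your sketch (Bony decomposition of $[f,\Delta_q]\mathcal{A}g$, Taylor gain $2^{-q}$ on the principal paraproduct term, spectral localization for the remainder pieces, then H\"older in $t$ with $1/\rho=1/\rho_1+1/\rho_2$) is precisely the standard expansion of that remark. The three cases arise exactly as you describe, from where the extra derivative needed to absorb $\mathcal{A}$ is placed, with $B^{1+N/p}_{p,1}\hookrightarrow \mathrm{Lip}$ handling the first case and the two asymmetric placements giving the two endpoint cases at $s=N/p$.
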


Finally, we state a continuity result for compositions (see
\cite{Abidi}) to end up this section.
\begin{proposition}\label{prop2.3}
Let $s>0$, $1\leq p, r, \rho\leq \infty$, $F\in
W^{[s]+1,\infty}_{loc}(I;\mathbf{R})$ with $F(0)=0$, $T\in
(0,\infty]$ and $v\in \widetilde{L}^{\rho}_{T}(B^{s}_{p,r})\cap
L^{\infty}_{T}(L^{\infty}).$ Then
$$\|F(v)\|_{\widetilde{L}^{\rho}_{T}(B^{s}_{p,r})}\leq
C(1+\|v\|_{L^{\infty}_{T}(L^{\infty})})^{[s]+1}\|v\|_{\widetilde{L}^{\rho}_{T}(B^{s}_{p,r})}.$$
\end{proposition}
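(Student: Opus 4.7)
The plan is to adapt the Meyer-style first-linearization proof of composition in Besov spaces (see Chemin \cite{C} and Danchin \cite{D}) to the Chemin-Lerner setting by performing all dyadic reductions pointwise in time and only afterwards taking the $L^\rho_T$ norm, so as to land in $\widetilde{L}^\rho_T(B^s_{p,r})$ rather than the coarser $L^\rho_T(B^s_{p,r})$ (which, by Remark \ref{rem2.1}, is strictly weaker in general).

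First, I would use $F(0)=0$ and the telescoping identity based on the nonhomogeneous Littlewood-Paley partition to write
$$F(v) \;=\; F(\Delta_{-1}v)\;+\;\sum_{q\ge 0}\bigl(F(S_{q+1}v)-F(S_q v)\bigr) \;=\; F(\Delta_{-1}v)\;+\;\sum_{q\ge 0}m_q\,\Delta_q v,$$
where $m_q(t,x):=\int_0^1 F'\bigl(S_q v(t,x)+\theta\Delta_q v(t,x)\bigr)\,d\theta$. Since $\|S_q v(t)\|_{L^\infty}+\|\Delta_q v(t)\|_{L^\infty}\le C\|v(t)\|_{L^\infty}$, the argument of $F'$ stays in a fixed compact interval $I\supset[-C\|v\|_{L^\infty_T(L^\infty)},C\|v\|_{L^\infty_T(L^\infty)}]$, on which the hypothesis $F\in W^{[s]+1,\infty}_{\mathrm{loc}}$ makes $F$ and all its derivatives up to order $[s]+1$ bounded.

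Second, I would apply $\Delta_j$ and split the sum into $q>j$ (high frequency relative to $j$) and $q\le j$ (low frequency relative to $j$). For $q>j$, use $\|m_q\|_{L^\infty_{t,x}}\le\|F'\|_{L^\infty(I)}$ together with boundedness of $\Delta_j$ on $L^p$ to get $\|\Delta_j(m_q\Delta_q v)\|_{L^\rho_T(L^p)}\lesssim\|F'\|_{L^\infty(I)}\|\Delta_q v\|_{L^\rho_T(L^p)}$. For $q\le j$, use Bernstein's inequality (Lemma \ref{lem2.1}) to gain $2^{-j([s]+1)}$ at the cost of differentiating $m_q\Delta_q v$ exactly $[s]+1$ times; the Fa\`a di Bruno expansion of derivatives of $m_q$ produces a sum of terms $F^{(k)}(S_q v+\theta\Delta_q v)\prod_i\nabla^{a_i}(S_q v+\theta\Delta_q v)$ with $k\le[s]+1$ and $\sum_i a_i\le[s]+1$, and each $\|\nabla^{a_i}(S_q v+\theta\Delta_q v)\|_{L^\infty}$ is bounded by $C\,2^{qa_i}\|v\|_{L^\infty_T(L^\infty)}$ via Bernstein. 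Collecting gives, for some $\alpha>0$,
$$2^{js}\|\Delta_j(m_q\Delta_q v)\|_{L^\rho_T(L^p)} \;\le\; C\bigl(1+\|v\|_{L^\infty_T(L^\infty)}\bigr)^{[s]+1}\,2^{-\alpha|j-q|}\bigl(2^{qs}\|\Delta_q v\|_{L^\rho_T(L^p)}\bigr),$$
where $\alpha>0$ uses $s>0$ on the $q>j$ side and the gain from Bernstein on the $q\le j$ side.

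Third, I would dispose of the low-frequency term $F(\Delta_{-1}v)$ by a direct Taylor expansion (again producing the prefactor $(1+\|v\|_{L^\infty_T(L^\infty)})^{[s]+1}$ and a decay factor in $j$), then $\ell^r$-sum in $j$ by discrete Young's inequality against the summable kernel $(2^{-\alpha|j-q|})_{j}$, yielding
$$\|F(v)\|_{\widetilde{L}^\rho_T(B^s_{p,r})} \;\le\; C\bigl(1+\|v\|_{L^\infty_T(L^\infty)}\bigr)^{[s]+1}\,\|v\|_{\widetilde{L}^\rho_T(B^s_{p,r})}.$$

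The main obstacle I expect is the Fa\`a di Bruno bookkeeping in step two: one must organize the expansion so that exactly $[s]+1$ derivatives of $F$ are used (matching the hypothesis $F\in W^{[s]+1,\infty}_{\mathrm{loc}}$), and so that, after Bernstein, every spatial derivative of $v$ is converted to a power of $2^q$ multiplying a single $\|v\|_{L^\infty}$, producing precisely $(1+\|v\|_{L^\infty_T(L^\infty)})^{[s]+1}$ and not a higher power. A secondary subtlety is ensuring that the $L^\rho_T$ norm is taken \emph{after} the pointwise-in-$t$ dyadic estimates and \emph{before} the $\ell^r$ summation; this is what lands the output in the Chemin-Lerner space $\widetilde{L}^\rho_T(B^s_{p,r})$ rather than in $L^\rho_T(B^s_{p,r})$, and it is the only structural change required compared with the classical stationary composition argument.
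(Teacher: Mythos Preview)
The paper does not supply its own proof of Proposition~\ref{prop2.3}; it simply quotes the result from Abidi~\cite{Abidi} as the last item in the toolbox section. Your Meyer-style first-linearization argument (telescoping $F(v)=F(S_0v)+\sum_{q\ge0}m_q\Delta_qv$, splitting $q>j$ versus $q\le j$, Bernstein plus Fa\`a di Bruno on the low-frequency piece, and taking the $L^\rho_T$ norm before the $\ell^r$ summation) is precisely the standard proof one finds in the references (Abidi~\cite{Abidi}, and in the stationary case Chemin~\cite{C} and Danchin~\cite{D}), so your proposal is correct and aligned with the cited source, even though the present paper omits the argument entirely.
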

\section{The proofs of main results}\setcounter{equation}{0} \label{sec:3}
In what follows, we focus on the proofs of main results. For
clarity, we divide them into several subsections, since the proofs
are a bit longer.

\subsection{Reformulation and local existence}\label{sec:3.1}

In this section, we reformulate (\ref{R-E1})-(\ref{R-E3}) in order
to obtain the effective \textit{a priori} estimates by means of
Fourier frequency localization.

For the isentropic case $(\gamma>1)$, let us introduce the sound
speed
$$\psi(n)=\sqrt{P'(n)},$$ and set $\bar{\psi}=\psi(\bar{n})$
corresponding to the sound speed at a background density $\bar{n}$.
Define
\begin{equation}\varrho=\frac{2}{\gamma-1}\Big(\psi(n)-\bar{\psi}\Big), \,\,\, \mathbf{F}=\mathbf{B}-\overline{\mathbf{B}}.\label{R-E16}\end{equation}

Set $$W:=(\varrho,\mathbf{u},\mathbf{E},\mathbf{F})^\top.$$ Then the
system (\ref{R-E1}) can be reduced to the symmetric hyperbolic
system for smooth solutions:
\begin{equation}
\left\{
\begin{array}{l}
\partial_{t}\varrho+\bar{\psi}\mbox{div}\textbf{u}=-\textbf{u}\cdot\nabla
\varrho-\frac{\gamma-1}{2}\varrho\mbox{div}\textbf{u},\\
 \partial_{t}\textbf{u}+\bar{\psi}\nabla \varrho+\frac{\textbf{u}}{\tau}=-\textbf{u}\cdot\nabla\textbf{u}
 -\frac{\gamma-1}{2}\varrho\nabla \varrho-(\mathbf{E}+\varepsilon\textbf{u}\times
 (\mathbf{F}+\overline{\mathbf{B}})),
 \\
 \partial_{t}\mathbf{E}-\frac{1}{\varepsilon}\nabla\times\mathbf{F}=
\bar{n}\textbf{u}+h(\varrho)\textbf{u},\\
\partial_{t}\mathbf{F}+\frac{1}{\varepsilon}\nabla\times\mathbf{E}=0,\\
\nabla\cdot\mathbf{E}=-h(\varrho),\ \ \ \nabla\cdot\mathbf{F}=0,
 \end{array} \right.\label{R-E17}
\end{equation}
where
$h(\varrho)=\{(P_{0}\gamma)^{-\frac{1}{2}}(\frac{\gamma-1}{2}\varrho+\bar{\psi})\}^{\frac{2}{\gamma-1}}-\bar{n}$\
is a smooth function on the domain
$\{\varrho|\frac{\gamma-1}{2}\varrho+\bar{\psi}>0\}$ satisfying
$h(0)=0$. The initial data (\ref{R-E3}) becomes into
\begin{equation}W|_{t=0}=(\varrho_{0},\textbf{u}_{0},\textbf{E}_{0},\textbf{F}_{0})\label{R-E18}\end{equation}
with
$$\varrho_{0}=\frac{2}{\gamma-1}\Big(\psi(n_{0})-\bar{\psi}\Big),
\,\,\, \mathbf{F}_{0}=\mathbf{B}_{0}-\overline{\mathbf{B}}.$$ Under
the symmetrization transform (\ref{R-E16}), the initial data
(\ref{R-E18}) satisfies the corresponding compatible conditions
\begin{equation}
\nabla\cdot\mathbf{E}_{0}=-h(\varrho_{0}),\ \ \
\nabla\cdot\mathbf{F_{0}}=0,\ \ \ x\in \mathbf{R}^{N}. \label{R-E19}
\end{equation}

\begin{remark}\label{rem3.1} \rm The variable change is from the open set $\{(n,\textbf{u},\textbf{E},\textbf{B})\in (0,+\infty)\times \mathbf{R}^{N}\times \mathbf{R}^{N}\times \mathbf{R}^{N}\}$ to the
open set $\{W\in \mathbf{R}\times \mathbf{R}^{N}\times
\mathbf{R}^{N}\times
\mathbf{R}^{N}|\frac{\gamma-1}{2}\varrho+\bar{\psi}>0\}$. It is easy
to show that for classical solutions
$(n,\textbf{u},\textbf{E},\textbf{B})$ away from vacuum,
(\ref{R-E1})-(\ref{R-E3}) is equivalent to
(\ref{R-E17})-(\ref{R-E18}) with
$\frac{\gamma-1}{2}\varrho+\bar{\psi}>0$.
\end{remark}

For the isothermal case where $\gamma=1$, the form of (\ref{R-E17})
is still valid with $\bar\psi = \sqrt{P_{0}}$, while the
symmetrization transform depends on the following enthalpy variable
change
\begin{eqnarray}
\varrho = \sqrt{P_{0}}(\ln n - \ln\bar n)\label{R-E20},
\end{eqnarray}
for the details, see e.g. \cite{FXZ}.

Without loss of generality, we shall study the system
(\ref{R-E17})-(\ref{R-E18}) for $\gamma>1$ and prove main results,
since the case of $\gamma=1$ can be discussed in the same way.

In \cite{K,M}, Kato and Majda established a local existence theory
for generally symmetric hyperbolic systems pertaining to data in the
Sobolev spaces with higher regularity. Recently, using the
regularized means and compactness argument, we have established a
local existence in the framework critical Besov spaces for the
Euler-Poisson equations (\ref{R-E6}), see \cite{FXZ}. In the present
paper, we further strengthen the result such that it holds in
Chemin-Lerner's spaces with critical regularity. Our result reads as
follows.\\

\begin{proposition}\label{prop3.1} For any fixed $0<\tau,\varepsilon\leq1$,
assume that $W_{0}\in{B^{\sigma}_{2,1}}$ satisfying
$\frac{\gamma-1}{2}\varrho_{0}+\bar{\psi}>0$ and (\ref{R-E19}), then
there exist a time $T_{0}>0$ (depending only on the initial data)
and a unique solution $W$ to (\ref{R-E17})-(\ref{R-E18}) such that
$W\in \mathcal{C}^{1}([0,T_{0}]\times \mathbf{R}^{N})$ with
$\frac{\gamma-1}{2}\varrho+\bar{\psi}>0$ for all $t\in[0,T_{0}]$ and
$W\in \widetilde{\mathcal{C}}_{T_{0}}(B^{\sigma}_{2,1})\cap
\widetilde{\mathcal{C}}^1_{T_{0}}(B^{\sigma-1}_{2,1})$.
\end{proposition}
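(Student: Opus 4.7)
The plan is to follow a Friedrichs-type regularization scheme adapted to the Chemin-Lerner framework, in the spirit of the author's earlier treatment of Euler-Poisson in \cite{FXZ}, with the extra care needed to preserve the $\ell^1$-summability which is characteristic of the space $B^{\sigma}_{2,1}$. First I would construct a sequence of approximate solutions $W^n$ by projecting the Fourier support onto $\{|\xi|\le n\}$, so that (\ref{R-E17})-(\ref{R-E18}) becomes an ODE in $L^2$ to which Cauchy-Lipschitz applies and yields global-in-time smooth solutions (with the positivity $\frac{\gamma-1}{2}\varrho^n_0+\bar\psi>0$ propagated for short time by continuity). Since the parameters $\tau,\varepsilon\in(0,1]$ are fixed here, the singular factor $1/\varepsilon$ in the Maxwell block is treated as a harmless constant and does not obstruct the local theory; what matters is that the full principal part of (\ref{R-E17}) is symmetric hyperbolic after the change of variable (\ref{R-E16}), the symmetrization being precisely what removes the troublesome commutator coming from $\nabla P(n)$ under dyadic localization.

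Next I would derive a uniform a priori estimate in $\widetilde{L}^{\infty}_{T}(B^{\sigma}_{2,1})$. Applying $\Delta_q$ to each equation of (\ref{R-E17}), taking the $L^2$ scalar product with $\Delta_q W^n$, and using symmetry, the top-order term contributed by the transport operator $u\cdot\nabla$ produces commutators that are controlled by Lemma~\ref{lem2.2} at the critical index $s=N/p+1$ (with $p=2$). The quadratic nonlinearities $\varrho\,\mbox{div}\,u$, $\varrho\nabla\varrho$, $u\cdot\nabla u$, $u\times(F+\overline{\mathbf{B}})$ and $h(\varrho)u$ are handled by the product rule of Proposition~\ref{prop2.2} and, for the composition $h(\varrho)$, by Proposition~\ref{prop2.3}, which is legitimate on the interval where $\frac{\gamma-1}{2}\varrho+\bar\psi>0$. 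Multiplying the resulting dyadic inequalities by $2^{q\sigma}$, taking $L^\infty_T$ in time first, and then performing the $\ell^1_q$-summation (this is the crucial order, responsible for the $\widetilde L$-gain over $L^\infty_T B^{\sigma}_{2,1}$), I obtain for some $T_0>0$ a bound of the form $\|W^n\|_{\widetilde L^\infty_{T_0}(B^{\sigma}_{2,1})}\le C\|W_0\|_{B^{\sigma}_{2,1}}$ independent of $n$.

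Passage to the limit is then performed by a compactness argument: uniform bounds in $\widetilde{L}^{\infty}_{T_0}(B^{\sigma}_{2,1})$ together with the equation itself give $\partial_t W^n$ bounded in $\widetilde L^\infty_{T_0}(B^{\sigma-1}_{2,1})$, so by Proposition~\ref{prop2.1} and Ascoli's theorem we extract a subsequence converging strongly in $\mathcal{C}([0,T_0],(B^{\sigma-1}_{2,1})_{\mathrm{loc}})$; this strong convergence is enough to pass to the limit in all nonlinear terms. Interpolation with the uniform bound recovers membership in $\widetilde{\mathcal{C}}_{T_0}(B^{\sigma}_{2,1})$, and re-inserting into (\ref{R-E17}) gives $\partial_t W\in \widetilde L^\infty_{T_0}(B^{\sigma-1}_{2,1})$, hence $W\in\widetilde{\mathcal{C}}^1_{T_0}(B^{\sigma-1}_{2,1})$. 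Since $B^{\sigma-1}_{2,1}\hookrightarrow L^\infty$, the $\mathcal{C}^1([0,T_0]\times\mathbf{R}^N)$ regularity and the pointwise positivity $\frac{\gamma-1}{2}\varrho+\bar\psi>0$ follow from continuity by shrinking $T_0$ if necessary. Uniqueness is obtained by an energy estimate for the difference of two solutions performed at the \emph{lower} regularity level $B^{\sigma-1}_{2,1}$ together with a Gronwall argument, which is the standard way to avoid the loss of one derivative in the quasilinear setting.

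The main obstacle I anticipate is not the energy estimate itself but arranging the order of the norms so that the $\ell^1_q$ structure survives at criticality: one cannot interchange $\sup_{t}$ and $\sum_{q}$ freely, and standard Kato-Majda-type arguments yield only $L^\infty_T(B^{\sigma}_{2,1})$ which is strictly weaker than $\widetilde L^\infty_T(B^{\sigma}_{2,1})$ by Remark~\ref{rem2.1}. The remedy is to absorb the time integration \emph{inside} each dyadic block before summing, which forces the commutator estimates of Lemma~\ref{lem2.2} to be invoked with $(\rho_1,\rho_2)=(\infty,1)$ and requires an auxiliary $\widetilde L^1_{T_0}(B^{\sigma}_{2,1})$ control of $\nabla u$ — itself obtained by integrating the dyadic energy inequality in time before summation and closing the loop via a small-time bootstrap.
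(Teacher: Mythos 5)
Your proposal is correct and essentially sound, but it takes a more self-contained route than the paper. The paper's actual proof of Proposition~\ref{prop3.1} does \emph{not} rebuild the local theory from scratch: it simply invokes the local existence and uniqueness result of \cite{FXZ}, suitably revised for the Euler--Maxwell system, to obtain a unique solution $W\in\mathcal{C}([0,T_0],B^{\sigma}_{2,1})\cap\mathcal{C}^1([0,T_0],B^{\sigma-1}_{2,1})$ that is $\mathcal{C}^1$ in $(t,x)$ and stays away from vacuum, and then restricts attention to the one new point: upgrading $L^\infty_{T_0}(B^{\sigma}_{2,1})$ to $\widetilde L^\infty_{T_0}(B^{\sigma}_{2,1})$ (and similarly for $W_t$). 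For this it applies $\Delta_q$ to (\ref{R-E17}), derives the dyadic energy inequality (\ref{R-E24})--(\ref{R-E25}), integrates in $t$ \emph{inside} each block, multiplies by $2^{q\sigma}$ and only then sums over $q$ — arriving at $\|W\|_{\widetilde L^\infty_t(B^{\sigma}_{2,1})}\le C\|W_0\|_{B^{\sigma}_{2,1}}+C\int_0^t\|W\|^2_{B^{\sigma}_{2,1}}\,d\varsigma$ and closing by Gronwall (and Remark~\ref{rem2.1}). Your Friedrichs-projection construction, compactness passage to the limit, and uniqueness estimate at the lower regularity level $B^{\sigma-1}_{2,1}$ all reconstruct precisely the content that the paper offloads onto \cite{FXZ}; what you call the ``crucial order'' of taking $L^\infty_t$ before $\ell^1_q$ is exactly the mechanism used in the paper's (\ref{R-E25})--(\ref{R-E27}). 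Two minor observations: (i) the auxiliary $\widetilde L^1_{T_0}(B^{\sigma}_{2,1})$ control of $\nabla u$ you mention is not needed as such — it suffices to control $\|\nabla u\|_{L^1_{T_0}(L^\infty)}$ via the embedding $B^{\sigma-1}_{2,1}\hookrightarrow L^\infty$ and the smallness of $T_0$, as the paper does; (ii) the statement that ``interpolation'' recovers $\widetilde{\mathcal{C}}_{T_0}(B^{\sigma}_{2,1})$ glosses over the standard argument that time continuity in $B^{\sigma}_{2,1}$ follows from blockwise continuity plus the $\ell^1$-summable tail provided by the $\widetilde L^\infty_{T_0}$ bound. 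In short, yours is a correct but longer argument; the paper economizes by citation and isolates the genuinely new step, namely the Chemin--Lerner refinement.
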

\begin{proof}
Let the assumptions of Proposition \ref{prop3.1} be fulfilled. By a
proper revision, the local result in \cite{FXZ} can also be adapted
to the Euler-Maxwell equations (\ref{R-E17})-(\ref{R-E18}). That is,
there exist a time $T_{0}>0$ (depending only on the initial data)
and a unique solution $W$ to (\ref{R-E17})-(\ref{R-E18}) such that
 $W\in \mathcal{C}^{1}([0,T_{0}]\times
\mathbf{R}^{N})$ with $\frac{\gamma-1}{2}\varrho+\bar{\psi}>0$ for
all $t\in[0,T_{0}]$ and $W \in
\mathcal{C}([0,T_{0}],B^{\sigma}_{2,1})\cap
\mathcal{C}^1([0,T_{0}],B^{\sigma-1}_{2,1})$. In order to prove
Proposition \ref{prop3.1}, it suffices to show that $W\in
\widetilde{L}^{\infty}_{T_{0}}(B^{\sigma}_{2,1})$ and $W_{t}\in
\widetilde{L}^{\infty}_{T_{0}}(B^{\sigma-1}_{2,1})$.

Indeed, applying the operator $\Delta_{q}(q\geq-1)$ to
(\ref{R-E17}), we infer that $(\Delta_{q}\varrho,
\Delta_{q}\textbf{u}, \Delta_{q}\textbf{E},\\ \Delta_{q}\textbf{F})$
satisfies
\begin{equation}
\left\{
\begin{array}{l}
\partial_{t}\Delta_{q}\varrho+\bar{\psi}\Delta_{q}\mbox{div}\textbf{u}=-(\textbf{u}\cdot\nabla)\Delta_{q}\varrho+[\textbf{u},\Delta_{q}]\cdot\nabla
\varrho-\frac{\gamma-1}{2}\Delta_{q}(\varrho\mbox{div}\textbf{u}),\\[1mm]
 \partial_{t}\Delta_{q}\textbf{u}+\bar{\psi}\Delta_{q}\nabla \varrho+\frac{1}{\tau}\Delta_{q}\textbf{u}
 =-(\textbf{u}\cdot\nabla)\Delta_{q}\textbf{u}+[\textbf{u},\Delta_{q}]\cdot\nabla\textbf{u}
 \\\hspace{28mm}-\frac{\gamma-1}{2}\Delta_{q}(\varrho\nabla
 \varrho)-\Delta_{q}\textbf{E}-\varepsilon\Delta_{q}\textbf{u}\times\overline{\mathbf{B}}-\varepsilon\Delta_{q}(\textbf{u}\times\mathbf{F}),\\[1mm]
 \partial_{t}\Delta_{q}\mathbf{E}-\frac{1}{\varepsilon}\nabla\times\Delta_{q}\mathbf{F}=
\bar{n}\Delta_{q}\textbf{u}+\Delta_{q}(h(\varrho)\textbf{u}),\\[1mm]
\partial_{t}\Delta_{q}\mathbf{F}+\frac{1}{\varepsilon}\nabla\times\Delta_{q}\mathbf{E}=0,
\end{array}
\right.\label{R-E21}
\end{equation}
where the commutator $[\cdot,\cdot]$ is defined by $[f,g]=fg-gf$.

Then multiplying the first equation of Eq. (\ref{R-E21}) by
$\Delta_{q}\varrho$, the second one by $\Delta_{q}\textbf{u}$, and
adding the resulting equations together, after integrating it over
$\mathbf{R}^{N}$, we have the energy equality
\begin{eqnarray}&&\frac{1}{2}\frac{d}{dt}\Big(\|\Delta_{q}\varrho\|^2_{L^2}+\|\Delta_{q}\textbf{u}\|^2_{L^2}\Big)+\frac{1}{\tau}\|\Delta_{q}\textbf{u}\|^2_{L^2}\nonumber
\\&=&\frac{1}{2}\int\mathrm{div}\textbf{u}(|\Delta_{q}\varrho|^2+|\Delta_{q}\textbf{u}|^2)+\int([\textbf{u},\Delta_{q}]\cdot\nabla
\varrho\Delta_{q}\varrho+[\textbf{u},\Delta_{q}]\cdot\nabla\textbf{u}\Delta_{q}\textbf{u})
\nonumber\\&&+\frac{\gamma-1}{2}\int\Delta_{q}\varrho(\nabla
\varrho\cdot\Delta_{q}\textbf{u})-\frac{\gamma-1}{2}\int\Big([\Delta_{q},\varrho]\mathrm{div}\textbf{u}\Delta_{q}\varrho+[\Delta_{q},\varrho]\nabla
\varrho\cdot\Delta_{q}\textbf{u}\Big)\nonumber\\&&-\int\Delta_{q}\textbf{E}\cdot\Delta_{q}\textbf{u}-\varepsilon\int\Delta_{q}(\textbf{u}\times\mathbf{F})\cdot\Delta_{q}\textbf{u},
\label{R-E22}\end{eqnarray} where we have used the fact
$\varepsilon(\Delta_{q}\textbf{u}\times\overline{\mathbf{B}})\cdot\Delta_{q}\textbf{u}=0.$

On the other hand, multiplying the third equation of Eq.
(\ref{R-E21}) by $\frac{1}{\bar{n}}\Delta_{q}\textbf{E}$ and the
last one by $\frac{1}{\bar{n}}\Delta_{q}\textbf{F}$, integrating it
over $\mathbf{R}^{N}$ after adding the resulting equations together
implies
\begin{eqnarray}&&\frac{1}{2\bar{n}}\frac{d}{dt}\Big(\|\Delta_{q}\textbf{E}\|^2_{L^2}+\|\Delta_{q}\textbf{F}\|^2_{L^2}\Big)\nonumber
\\&=&\int\Delta_{q}\textbf{u}\cdot\Delta_{q}\textbf{E}+\frac{1}{\bar{n}}\Delta_{q}(h(\varrho)\textbf{u})\cdot\Delta_{q}\textbf{E},\label{R-E23}\end{eqnarray}
where we used the vector analysis formula
$\nabla\cdot(\mathbf{f}\times
\mathbf{g})=(\nabla\times\mathbf{f})\cdot\mathbf{g}-(\nabla\times\mathbf{g})\cdot\mathbf{f}.$

Combining  with the above identities (\ref{R-E22})-(\ref{R-E23}),
with the aid of Cauchy-Schwartz inequality, we get
\begin{eqnarray}
&&\frac{1}{2}\frac{d}{dt}\Big(\|\Delta_{q}\varrho\|^2_{L^2}+\|\Delta_{q}\textbf{u}\|^2_{L^2}+\frac{1}{\bar{n}}\|\Delta_{q}\textbf{E}\|^2_{L^2}
+\frac{1}{\bar{n}}\|\Delta_{q}\textbf{F}\|^2_{L^2}\Big)+\frac{1}{\tau}\|\Delta_{q}\textbf{u}\|^2_{L^2}
\nonumber
\\&\leq&\frac{1}{2}\|\nabla\textbf{u}\|_{L^{\infty}}(\|\Delta_{q}\varrho\|^2_{L^2}+\|\Delta_{q}\textbf{u}\|^2_{L^2})+\frac{\gamma-1}{2}\|\nabla\varrho\|_{L^{\infty}}
\|\Delta_{q}\varrho\|_{L^2}\|\Delta_{q}\textbf{u}\|_{L^2}\nonumber
\\&&+\|[\textbf{u},\Delta_{q}]\nabla\varrho\|_{L^2}\|\Delta_{q}\varrho\|_{L^2}+\|[\textbf{u},\Delta_{q}]\cdot\nabla\textbf{u}\|_{L^2}\|\Delta_{q}\textbf{u}\|_{L^2}
\nonumber
\\&&+\frac{\gamma-1}{2}\|[\varrho,\Delta_{q}]\nabla
\varrho\|_{L^2}\|\Delta_{q}\textbf{u}\|_{L^2}+\frac{\gamma-1}{2}\|[\varrho,\Delta_{q}]\mathrm{div}\textbf{u}\|_{L^2}\|\Delta_{q}\varrho\|_{L^2}
\nonumber
\\&&+\varepsilon\|\Delta_{q}(\textbf{u}\times\mathbf{F})\|_{L^2}\|\Delta_{q}\textbf{u}\|_{L^2}+\frac{1}{\bar{n}}\|\Delta_{q}(h(\varrho)\textbf{u})\|_{L^2}\|\Delta_{q}\textbf{E}\|_{L^2},\ \
t\in[0,T_{0}]. \label{R-E24}
\end{eqnarray}
Next, we may neglect the effect of relaxation term
$\frac{1}{\tau}\|\Delta_{q}\textbf{u}\|^2_{L^2}$, since it is only
responsible for the large time behavior of solutions to
(\ref{R-E17})-(\ref{R-E18}). Dividing (\ref{R-E24}) by
$\{(\|\Delta_{q}\varrho\|^2_{L^2}+\|\Delta_{q}\textbf{u}\|^2_{L^2}+\frac{1}{\bar{n}}\|\Delta_{q}\textbf{E}\|^2_{L^2}
+\frac{1}{\bar{n}}\|\Delta_{q}\textbf{F}\|^2_{L^2})+\epsilon\}^{\frac{1}{2}}$
($\epsilon>0$ a small quantity), we obtain
\begin{eqnarray}
&&\frac{1}{2}\frac{d}{dt}\Big\{\Big(\|\Delta_{q}\varrho\|^2_{L^2}+\|\Delta_{q}\textbf{u}\|^2_{L^2}+\frac{1}{\bar{n}}\|\Delta_{q}\textbf{E}\|^2_{L^2}
+\frac{1}{\bar{n}}\|\Delta_{q}\textbf{F}\|^2_{L^2}\Big)+\epsilon\Big\}^{1/2}
\nonumber
\\&\leq&\frac{1}{2}\|\nabla\textbf{u}\|_{L^{\infty}}(\|\Delta_{q}\varrho\|_{L^2}+\|\Delta_{q}\textbf{u}\|_{L^2})+\frac{\gamma-1}{2}\|\nabla\varrho\|_{L^{\infty}}
\|\Delta_{q}\textbf{u}\|_{L^2}\nonumber
\\&&+\|[\textbf{u},\Delta_{q}]\nabla\varrho\|_{L^2}+\|[\textbf{u},\Delta_{q}]\cdot\nabla\textbf{u}\|_{L^2}
+\frac{\gamma-1}{2}\|[\varrho,\Delta_{q}]\nabla
\varrho\|_{L^2}\nonumber
\\&&+\frac{\gamma-1}{2}\|[\varrho,\Delta_{q}]\mathrm{div}\textbf{u}\|_{L^2}
+\varepsilon\|\Delta_{q}(\textbf{u}\times\mathbf{F})\|_{L^2}+\frac{1}{\bar{n}}\|\Delta_{q}(h(\varrho)\textbf{u})\|_{L^2}\label{R-E25}
\end{eqnarray}
for $t\in[0,T_{0}]$. Integrating (\ref{R-E25}) with respect to the
variable $t$, then taking $\epsilon\rightarrow0$, and using the
estimates of commutators and continuity for the composition in the
stationary case, see \cite{FXZ}, we arrive at
\begin{eqnarray}
&&2^{q\sigma}\|\Delta_{q}W\|_{L^{\infty}_{t}(L^2)} \nonumber\\&\leq&
C2^{q\sigma}\|\Delta_{q}W_{0}\|_{L^2}
+C\int^{t}_{0}c_{q}(\varsigma)\|(\varrho,\textbf{u},\textbf{F})\|^2_{B^{\sigma}_{2,1}}d\varsigma\nonumber\\
&&+C\int^{t}_{0}2^{q\sigma}\|(\nabla
\varrho,\nabla\textbf{u})\|_{L^{\infty}}\|(\Delta_{q}\varrho,\Delta_{q}\textbf{u})\|_{L^2}d\varsigma,
\label{R-E26}
\end{eqnarray}
where $\|c_{q}(t)\|_{\ell^1}\leq1$, for all $t\in[0,T_{0}]$. Next,
summing up (\ref{R-E26}) on $q\geq-1$ gives
\begin{eqnarray}
\|W\|_{\widetilde{L}^{\infty}_{t}(B^{\sigma}_{2,1})} \leq
C\|W_{0}\|_{B^{\sigma}_{2,1}}
+C\int^{t}_{0}\|W(\cdot,\varsigma)\|^2_{B^{\sigma}_{2,1}}d\varsigma,
\ \ t\in[0,T_{0}]. \label{R-E27}
\end{eqnarray}
Then it follows from Remark \ref{rem2.1} and Gronwall's inequality
that
\begin{eqnarray}
W\in\widetilde{L}^{\infty}_{T_{0}}(B^{\sigma}_{2,1}). \label{R-E28}
\end{eqnarray}
Furthermore, it is just a matter of using the equations
(\ref{R-E17}) and Proposition \ref{prop2.2}, we deduce that
\begin{eqnarray}W_{t}\in\widetilde{L}^{\infty}_{T_{0}}(B^{\sigma-1}_{2,1}).
\label{R-E29}
\end{eqnarray}
Hence, the proof of Proposition \ref{prop3.1} is complete.
\end{proof}

\subsection{Uniform \textit{a priori} estimate and global
existence} \label{sec:3.2} In this section, our central task is to
derive a crucial (uniform) \textit{a priori} estimate, which enables
us to achieve the global
existence of classical solutions to (\ref{R-E17})-(\ref{R-E18}).\\

\begin{proposition}\label{prop4.1}
If $W\in \widetilde{\mathcal{C}}_{T}(B^{\sigma}_{2,1})\cap
\widetilde{\mathcal{C}}^1_{T}(B^{\sigma-1}_{2,1})$ is a solution of
(\ref{R-E17})-(\ref{R-E18}) for any $T>0$ and
$0<\tau,\varepsilon\leq1$. There exist some positive constants
$\delta_{1}, \mu_{1}$ and $C_{1}$ independent of
$(\tau,\varepsilon)$ such that for any $T>0$, if
\begin{eqnarray}\|W\|_{\widetilde{L}^\infty_{T}(B^{\sigma}_{2,1})}\leq
\delta_{1},\label{R-E30}\end{eqnarray} then
\begin{eqnarray}&&\|W\|_{\widetilde{L}^\infty_{T}(B^{\sigma}_{2,1})}
\nonumber\\&&+\mu_{1}\Big\{\Big\|\Big(\sqrt{\tau}\varrho,\frac{1}{\sqrt{\tau}}\mathbf{u},\sqrt{\tau\varepsilon}\mathbf{E}\Big)\Big\|_{\widetilde{L}^2_{T}(B^{\sigma}_{2,1})}
+\Big\|\frac{1}{\sqrt{\varepsilon}}\nabla\mathbf{F}\Big\|_{\widetilde{L}^2_{T}(B^{\sigma-1}_{2,1})}\Big\}
\nonumber\\&\leq& C_{1}\|W_{0}\|_{B^{\sigma}_{2,1}}.\label{R-E31}
\end{eqnarray}
\end{proposition}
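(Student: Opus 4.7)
\textbf{Proof plan for Proposition \ref{prop4.1}.}

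My plan is to perform frequency-localized energy estimates on the symmetric hyperbolic system \eqref{R-E17}, extract a dissipation rate for every component (not just $\mathbf{u}$), keep careful track of the powers of $\tau$ and $\varepsilon$, and close the argument by combining all the estimates in Chemin--Lerner norms and absorbing the nonlinear contributions under the smallness hypothesis \eqref{R-E30}.

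First, I would start from the block equations \eqref{R-E21} and derive the basic $L^2$-energy identity \eqref{R-E24}, which immediately yields $\|\Delta_q W\|_{L^\infty_t L^2}$ control together with the relaxation dissipation $\tau^{-1}\|\Delta_q \mathbf{u}\|_{L^2_t L^2}^2$. This step alone does not see any dissipation for $\varrho$, $\mathbf{E}$ or $\mathbf{F}$, so the core of the proof is to produce it through Kawashima-type cross terms applied block-by-block. The three cross quantities I would compute time-derivatives of are
\begin{equation*}
\mathcal{I}_q^{(1)} := \int \Delta_q \mathbf{u}\cdot\nabla\Delta_q\varrho, \qquad
\mathcal{I}_q^{(2)} := \int \Delta_q \mathbf{u}\cdot \Delta_q\mathbf{E}, \qquad
\mathcal{I}_q^{(3)} := \int \Delta_q \mathbf{E}\cdot (\nabla\times\Delta_q \mathbf{F}).
\end{equation*}
Using \eqref{R-E21}, integration by parts together with the constraints $\nabla\cdot\mathbf{E}=-h(\varrho)$ and $\nabla\cdot\mathbf{F}=0$, one gets that $\tfrac{d}{dt}\mathcal{I}_q^{(1)}$ produces $\bar\psi\|\nabla\Delta_q\varrho\|_{L^2}^2$ minus $\bar\psi\|\mathrm{div}\Delta_q\mathbf{u}\|_{L^2}^2$ and controllable remainders; $\tfrac{d}{dt}\mathcal{I}_q^{(2)}$ produces $\bar n\|\Delta_q\mathbf{u}\|_{L^2}^2$ minus $\|\Delta_q\mathbf{E}\|_{L^2}^2$ (up to signs) modulo remainders with prefactor $1/\varepsilon$; and $\tfrac{d}{dt}\mathcal{I}_q^{(3)}$ produces $\varepsilon^{-1}\|\nabla\times\Delta_q\mathbf{F}\|_{L^2}^2 - \varepsilon^{-1}\|\nabla\times\Delta_q\mathbf{E}\|_{L^2}^2$ plus source terms (and $\nabla\cdot\mathbf{F}=0$ turns the curl norm into the full gradient norm). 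This is where the dissipation for $\varrho$, $\mathbf{E}$ and $\nabla\mathbf{F}$ comes from; the appropriate scalings of these identities with parameters in $\tau$ and $\varepsilon$ will give exactly the weights $\sqrt\tau\varrho$, $\mathbf{u}/\sqrt\tau$, $\sqrt{\tau\varepsilon}\mathbf{E}$, $\nabla\mathbf{F}/\sqrt{\varepsilon}$ appearing in \eqref{R-E31}.

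Next, I would combine the basic energy identity with small multiples of $\tfrac{d}{dt}\mathcal{I}_q^{(j)}$ so that the resulting quantity $\mathcal{L}_q$ stays pointwise-in-time equivalent to $\|\Delta_q W\|_{L^2}^2$, while its time derivative controls the dissipation functional on the left-hand side of \eqref{R-E31} at the dyadic level. Dividing by $\mathcal{L}_q^{1/2}$ (after adding a regularising $\epsilon>0$ as in \eqref{R-E25} and sending $\epsilon\to 0$) linearises the inequality, so multiplication by $2^{q\sigma}$ and $\ell^1_q$ summation yields the desired Chemin--Lerner estimate; the linear step is here precisely the content that will be isolated as Lemmas \ref{lem4.1}--\ref{lem4.2} and \ref{lem4.4}--\ref{lem4.5}. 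For the nonlinear right-hand side I would invoke the product rule of Proposition \ref{prop2.2}, the commutator estimates of Lemma \ref{lem2.2} and the composition estimate of Proposition \ref{prop2.3} (applied to $h(\varrho)$); every such term is at least quadratic in $W$, so after $\ell^1_q$ summation it is bounded by $C\|W\|_{\widetilde{L}^\infty_T(B^\sigma_{2,1})}$ times the dissipation functional, and can be absorbed on the left under \eqref{R-E30} for $\delta_1$ small enough.

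The main obstacle I anticipate is twofold. First, organising the three cross-term identities with coefficients in $\tau$ and $\varepsilon$ so that every dissipative term appears with the correct homogeneous weight $(\sqrt\tau,1/\sqrt\tau,\sqrt{\tau\varepsilon},1/\sqrt\varepsilon)$ required by \eqref{R-E31}, while simultaneously keeping $\mathcal{L}_q \simeq \|\Delta_q W\|_{L^2}^2$ uniformly in $(\tau,\varepsilon)\in(0,1]^2$; this is the genuinely new aspect compared with \cite{FXZ,X,XY}. Second, as pointed out in Remark following Theorem \ref{thm1.2}, there is \emph{no} low-frequency dissipation for $\mathbf{F}$ itself, only for $\nabla\mathbf{F}$, which is why the norm $\|\nabla\mathbf{F}/\sqrt{\varepsilon}\|_{\widetilde L^2_T(B^{\sigma-1}_{2,1})}$ replaces the more natural $\|\mathbf{F}\|_{\widetilde L^2_T(B^\sigma_{2,1})}$; ensuring that this partial dissipation is still sufficient to close the nonlinear estimates (in particular the Lorentz term $\varepsilon\mathbf{u}\times\mathbf{F}$) without losing a factor of $1/\varepsilon$ will require applying Proposition \ref{prop2.2} with the $L^2_T(L^\infty)$ factor put on $\mathbf{u}$ rather than on $\mathbf{F}$, so that the $\sqrt\varepsilon$ prefactor on the Lorentz term compensates the $1/\sqrt\varepsilon$ absent from the $\mathbf{F}$-dissipation.
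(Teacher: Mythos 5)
Your plan is sound and in fact follows essentially the same route as the paper: a basic frequency-localized $L^2$ energy identity (which yields the $\|W\|_{\widetilde{L}^\infty_T}$ bound and the $\tau^{-1}\|\mathbf{u}\|^2$ dissipation, as in Lemma~\ref{lem4.1}), followed by Kawashima-type cross-term estimates that extract dissipation rates for $\varrho$, $\mathbf{E}$ and $\nabla\mathbf{F}$ (Lemmas~\ref{lem4.2}, \ref{lem4.4}, \ref{lem4.5}), all combined with carefully tracked powers of $\tau$ and $\varepsilon$ and closed under the smallness hypothesis \eqref{R-E30}. You also correctly isolate the two genuine obstacles: the uniform-in-$(\tau,\varepsilon)$ weighting, and the absence of low-frequency dissipation for $\mathbf{F}$, which is precisely why the paper works with $\nabla\mathbf{F}/\sqrt\varepsilon$ and has to introduce Chemin--Lerner spaces.

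The main stylistic difference is that you propose physical-space cross functionals $\mathcal{I}_q^{(1)},\mathcal{I}_q^{(2)},\mathcal{I}_q^{(3)}$, whereas for the $\varrho$-dissipation the paper works on the Fourier side by multiplying the linearized momentum block by the Shizuta--Kawashima matrix $K(\xi)=\bigl(\begin{smallmatrix}0&\xi^\top/|\xi|\\-\xi/|\xi|&0\end{smallmatrix}\bigr)$ (Lemma~\ref{lem4.3}). These are equivalent: your $\mathcal{I}_q^{(1)}=\int\Delta_q\mathbf{u}\cdot\nabla\Delta_q\varrho$ is just the physical-space avatar of $\mathrm{Im}\,(\widehat{\Delta_q W_I})^\ast K(\xi)\widehat{\Delta_q W_I}$ (up to a $|\xi|$ weight that the paper re-introduces at \eqref{R-E47}). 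Similarly, your $\mathcal{I}_q^{(2)}$ is exactly the cross term the paper uses for the \emph{low-frequency} estimate on $\mathbf{E}$ (cf.\ \eqref{R-E66}--\eqref{R-E69}); for $q\geq 0$ the paper instead splits $\nabla\Delta_q\mathbf{E}$ into its divergence part, controlled pointwise by $\nabla\cdot\mathbf{E}=-h(\varrho)$, and its curl part, controlled by the Maxwell cross term corresponding to your $\mathcal{I}_q^{(3)}$. Both routes should close; the paper's div-curl splitting gives marginally cleaner powers of $2^q$, while yours is more unified.

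One point you should make explicit, because it is load-bearing and currently hidden under ``controllable remainders'': the dissipation that $\mathcal{I}_q^{(1)}$ produces from the hyperbolic symmetrizer alone is $\|\nabla\Delta_q\varrho\|_{L^2}^2$, which does \emph{not} control $\|\Delta_{-1}\varrho\|_{L^2}$. The low-frequency dissipation of $\varrho$, which is needed for the $\sqrt{\tau}\|\varrho\|_{\widetilde{L}^2_T(B^\sigma_{2,1})}$ term in \eqref{R-E31}, comes from the Euler--Poisson coupling: the term $-\int\Delta_q\mathbf{E}\cdot\nabla\Delta_q\varrho$ generated by the Lorentz field in $\tfrac{d}{dt}\mathcal{I}_q^{(1)}$ integrates by parts against the constraint $\nabla\cdot\mathbf{E}=-h(\varrho)$ to produce $-\int\Delta_q h(\varrho)\,\Delta_q\varrho$, whose linear part yields a definite-sign $-c\|\Delta_q\varrho\|^2_{L^2}$. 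This is precisely the paper's computation \eqref{R-E51}, and without stating it your estimate for $\varrho$ is incomplete on the block $q=-1$.
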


Having Proposition \ref{prop4.1}, thanks to the standard
continuation argument, we can extend the local-in-time solutions in
Proposition \ref{prop3.1}, and achieve the global existence of
classical solutions to the system (\ref{R-E17})-(\ref{R-E18}), here
we omit details, see e.g. \cite{FXZ}. It follows from Remark
\ref{rem2.1}, Proposition \ref{prop2.2} and the imbedding property
$B^{\sigma}_{2,1}\hookrightarrow \mathcal{C}^1$ that $W\in
\mathcal{C}^{1}([0,\infty)\times \mathbf{R}^{N})$ solves
(\ref{R-E17})-(\ref{R-E18}). The choice of $\delta_{1}$ is
sufficient to ensure $\frac{\gamma-1}{2}\varrho+\bar{\psi}>0$. Then
according to Remark \ref{rem3.1}, we know $(n,\textbf{u},
\textbf{E}, \textbf{B})\in \mathcal{C}^{1}([0,\infty)\times
\mathbf{R}^{N})$ is a solution of (\ref{R-E1})-(\ref{R-E3}) with
$n>0$.\ Furthermore, we arrive at Theorem \ref{thm1.2}.

Actually, the proof of Proposition \ref{prop4.1} is to capture the
dissipation rates from contributions of $(\varrho,\textbf{u},
\textbf{E}, \textbf{F})$ in turn by using the high- and
low-frequency decomposition methods. To do this, we divide it into
several
lemmas.\\

\begin{lemma}\label{lem4.1}
If $W\in \widetilde{\mathcal{C}}_{T}(B^{\sigma}_{2,1})\cap
\widetilde{\mathcal{C}}^1_{T}(B^{\sigma-1}_{2,1})$ is a solution of
(\ref{R-E17})-(\ref{R-E18}) for any $T>0$ and
$0<\tau,\varepsilon\leq1$, then the following estimate holds:
\begin{eqnarray}
&&\|W\|_{\widetilde{L}_{T}^{\infty}(B^{\sigma}_{2,1})}+\sqrt{\frac{\mu_{2}}{\tau}}\|\mathbf{u}\|_{\widetilde{L}^2_{T}(B^{\sigma}_{2,1})}
\nonumber
\\&\leq&C\|W_{0}\|_{B^{\sigma}_{2,1}}+C\sqrt{\|W\|_{\widetilde{L}_{T}^{\infty}(B^{\sigma}_{2,1})}}
\Big\|\Big(\sqrt{\tau}\varrho,\frac{1}{\sqrt{\tau}}\mathbf{u}\Big)\Big\|_{\widetilde{L}_{T}^2(B^{\sigma}_{2,1})},
\label{R-E32}
\end{eqnarray}
where $\mu_{2},C$ are some uniform positive constants independent of
$(\tau,\varepsilon)$.
\end{lemma}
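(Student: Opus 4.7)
The plan is to apply the Littlewood--Paley block $\Delta_q$ to the symmetric system (\ref{R-E17}) and perform a weighted $L^2$ energy estimate, along the lines of (\ref{R-E22})--(\ref{R-E24}). Testing the continuity and momentum equations against $(\Delta_q\varrho,\Delta_q\mathbf{u})$ and the Maxwell equations against $\bar{n}^{-1}(\Delta_q\mathbf{E},\Delta_q\mathbf{F})$, four critical linear cancellations occur: the symmetric $\bar{\psi}\nabla$/$\bar{\psi}\,\mathrm{div}$ pair via integration by parts; the electric coupling $-\Delta_q\mathbf{E}\cdot\Delta_q\mathbf{u}$ against $\bar{n}\,\Delta_q\mathbf{u}\cdot\Delta_q\mathbf{E}$, enabled by the $1/\bar{n}$ weight; the curl--curl duality via $\nabla\cdot(\mathbf{f}\times\mathbf{g})=(\nabla\times\mathbf{f})\cdot\mathbf{g}-\mathbf{f}\cdot(\nabla\times\mathbf{g})$; and the identity $\varepsilon(\Delta_q\mathbf{u}\times\overline{\mathbf{B}})\cdot\Delta_q\mathbf{u}\equiv 0$. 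The only surviving linear piece is the relaxation dissipation $\tau^{-1}\|\Delta_q\mathbf{u}\|_{L^2}^2$.

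Next I would invoke the $\sqrt{X_q^2+\epsilon}$-regularisation technique from Proposition \ref{prop3.1} to convert the quadratic energy identity into a first-order inequality in $X_q:=\|\Delta_q W\|_{L^2}$, integrate over $[0,t]$, let $\epsilon\to 0$, and take $L^\infty$ over $t\in[0,T]$. Using the pointwise bound $X_q(s)^{-1}\geq\|X_q\|_{L^\infty_T}^{-1}$ to isolate $\|\Delta_q\mathbf{u}\|_{L^2_T L^2}$ from the dissipation, multiplying through by $\|X_q\|_{L^\infty_T}$ and taking a square root yields a per-block estimate of the form $\|X_q\|_{L^\infty_T}+\sqrt{c/\tau}\,\|\Delta_q\mathbf{u}\|_{L^2_T L^2}\leq C(X_q(0)+(\|X_q\|_{L^\infty_T}\int_0^T Y_q(s)\,ds)^{1/2})$, where $Y_q$ collects the commutators and nonlinear sources after one factor of $X_q$ has been extracted. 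Multiplying by $2^{q\sigma}$, summing in $q$, and applying Cauchy--Schwarz in $\ell^2$ produces the desired prefactor $\sqrt{\|W\|_{\widetilde{L}^\infty_T(B^\sigma_{2,1})}}$ multiplying $(\sum_q 2^{q\sigma}\int_0^T Y_q\,ds)^{1/2}$.

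It then remains to show $\sum_q 2^{q\sigma}\int_0^T Y_q\,ds\leq C\|(\sqrt{\tau}\varrho,\mathbf{u}/\sqrt{\tau})\|^2_{\widetilde{L}^2_T(B^\sigma_{2,1})}$. The pivotal observation is the identity $\varrho\,\mathbf{u}=(\sqrt{\tau}\varrho)(\mathbf{u}/\sqrt{\tau})$: the genuinely bilinear convection/pressure contributions $\mathbf{u}\cdot\nabla\varrho$, $\varrho\,\mathrm{div}\mathbf{u}$, $\mathbf{u}\cdot\nabla\mathbf{u}$, $\varrho\nabla\varrho$ fit the desired form after H\"older in time and the embedding $B^{\sigma}_{2,1}\hookrightarrow W^{1,\infty}$; the mixed commutators $[\mathbf{u},\Delta_q]\nabla\varrho$, $[\mathbf{u},\Delta_q]\nabla\mathbf{u}$, $[\varrho,\Delta_q]\mathrm{div}\mathbf{u}$ are handled by Lemma \ref{lem2.2} with $\rho_1=\rho_2=2$; and the composition $h(\varrho)\mathbf{u}$ is controlled by combining Propositions \ref{prop2.2} and \ref{prop2.3}. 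In each case the two $\sqrt{\tau}$-factors on the slots of the product recombine to give the desired rescaled pair without leftover powers of $\tau$ or $\varepsilon$.

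The main obstacles are the Lorentz nonlinearity $\varepsilon\mathbf{u}\times\mathbf{F}$ and the same-function commutator $[\varrho,\Delta_q]\nabla\varrho$, for which $\mathbf{F}$ and unweighted $\varrho$ have no natural $\widetilde{L}^2_T$ control at this stage. I would handle both via Young's inequality applied pointwise in the energy, absorbing an auxiliary $\frac{1}{4\tau}\|\Delta_q\mathbf{u}\|_{L^2}^2$ into the relaxation dissipation and retaining $\tau\varepsilon^2\|\Delta_q(\mathbf{u}\times\mathbf{F})\|_{L^2_T L^2}^2$ or $\tau\|[\varrho,\Delta_q]\nabla\varrho\|_{L^2_T L^2}^2$ respectively. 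These are then estimated by Lemma \ref{lem2.2}/Proposition \ref{prop2.2} with $(\rho_1,\rho_2)=(\infty,2)$; the auxiliary factor $\tau$ cancels the $\tau^{-1}$ arising from $\|\varrho\|_{\widetilde{L}^2}^2=\tau^{-1}\|\sqrt{\tau}\varrho\|_{\widetilde{L}^2}^2$, and together with $\varepsilon\tau\leq 1$ ensures that the magnetic contribution is absorbed into $\|\mathbf{F}\|_{\widetilde{L}^\infty_T(B^\sigma_{2,1})}\cdot\|\mathbf{u}/\sqrt{\tau}\|_{\widetilde{L}^2_T(B^\sigma_{2,1})}$, bounded in turn by a constant multiple of $\sqrt{\|W\|_{\widetilde{L}^\infty_T(B^\sigma_{2,1})}}\,\|(\sqrt{\tau}\varrho,\mathbf{u}/\sqrt{\tau})\|_{\widetilde{L}^2_T(B^\sigma_{2,1})}$.
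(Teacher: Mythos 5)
Your overall plan --- localize with $\Delta_q$, exploit the symmetric cancellations and the $\bar n^{-1}$ weight, integrate the quadratic energy identity, extract the relaxation dissipation $\tau^{-1}\|\Delta_q\mathbf{u}\|^2$, rebalance each bilinear term as a product of $\sqrt{\tau}\varrho$ and $\mathbf{u}/\sqrt{\tau}$, and sum with $c_q$/Cauchy--Schwarz bookkeeping --- matches the paper's proof. However, your handling of the Lorentz term $\varepsilon\mathbf{u}\times\mathbf{F}$ and the same-function commutator $[\varrho,\Delta_q]\nabla\varrho$ contains a genuine error. You apply Young's inequality pointwise, absorbing $\frac{1}{4\tau}\|\Delta_q\mathbf{u}\|^2$ and retaining, e.g., $\tau\varepsilon^2\|\Delta_q(\mathbf{u}\times\mathbf{F})\|^2_{L^2_TL^2}$. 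After multiplying by $2^{2q\sigma}$, summing, and taking a square root this contributes a term of the form $\tau\varepsilon\,\|\mathbf{F}\|_{\widetilde L^\infty_T(B^\sigma_{2,1})}\,\|\mathbf{u}/\sqrt\tau\|_{\widetilde L^2_T(B^\sigma_{2,1})}$, i.e. the prefactor $\|\mathbf{F}\|_{\widetilde L^\infty}$ enters to the \emph{first} power. You then assert this is bounded by $C\sqrt{\|W\|_{\widetilde L^\infty}}\,\|(\sqrt\tau\varrho,\mathbf{u}/\sqrt\tau)\|_{\widetilde L^2}$, which amounts to claiming $\|\mathbf{F}\|_{\widetilde L^\infty}\le C\sqrt{\|W\|_{\widetilde L^\infty}}$. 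That is false without a smallness hypothesis on $\|W\|$, and Lemma~\ref{lem4.1} is stated without one; the $\sqrt{\;\cdot\;}$ prefactor must come out of the argument, not be imposed at the end.

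The underlying misconception is that these terms ``have no natural $\widetilde L^2_T$ control'' and therefore require absorption. In fact both appear in the $\mathbf{u}$-equation, so in the energy identity (\ref{R-E24}) they are automatically paired against $\Delta_q\mathbf{u}$:
\begin{eqnarray*}
-\varepsilon\int\Delta_{q}(\mathbf{u}\times\mathbf{F})\cdot\Delta_{q}\mathbf{u}
\quad\mbox{and}\quad
-\frac{\gamma-1}{2}\int[\Delta_{q},\varrho]\nabla\varrho\cdot\Delta_{q}\mathbf{u}.
\end{eqnarray*}
Because $\Delta_q\mathbf{u}$ already carries $\widetilde L^2_T$ control through the relaxation dissipation, one should \emph{not} absorb; instead bound them as products, as the paper does in (\ref{R-E33}):
$\varepsilon\|\Delta_q(\mathbf{u}\times\mathbf{F})\|_{L^2_T(L^2)}\|\Delta_q\mathbf{u}\|_{L^2_T(L^2)}$
and
$\|[\varrho,\Delta_q]\nabla\varrho\|_{L^2_T(L^2)}\|\Delta_q\mathbf{u}\|_{L^2_T(L^2)}$.
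After Proposition~\ref{prop2.2}/Lemma~\ref{lem2.2} these yield $c_q^2\|\mathbf{F}\|_{\widetilde L^\infty}\|\mathbf{u}\|^2_{\widetilde L^2}$ and $c_q^2\|\varrho\|_{\widetilde L^\infty}\|\varrho\|_{\widetilde L^2}\|\mathbf{u}\|_{\widetilde L^2}$, with the $\widetilde L^\infty$-norm appearing only to the \emph{first} power inside a quadratic right-hand side; the final square root (via $\sqrt{ab}\le(a+b)/2$, as in (\ref{R-E34})--(\ref{R-E35})) then produces $c_q\sqrt{\|\mathbf{F}\|_{\widetilde L^\infty}}\,\|\mathbf{u}/\sqrt\tau\|_{\widetilde L^2}$ and $c_q\sqrt{\|\varrho\|_{\widetilde L^\infty}}\bigl(\sqrt\tau\|\varrho\|_{\widetilde L^2}+\|\mathbf{u}/\sqrt\tau\|_{\widetilde L^2}\bigr)$ as required. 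Once you drop the Young absorption and keep these two terms as honest products with $\Delta_q\mathbf{u}$, the rest of your argument goes through.
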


\begin{proof}
By integrating (\ref{R-E24}) with respect to $t\in[0,T]$, with the
help of Cauchy-Schwartz inequality,  we have
\begin{eqnarray}
&&\frac{1}{2}\Big(\|\Delta_{q}\varrho\|^2_{L^2}+\|\Delta_{q}\textbf{u}\|^2_{L^2}+\frac{1}{\bar{n}}\|\Delta_{q}\textbf{E}\|^2_{L^2}
+\frac{1}{\bar{n}}\|\Delta_{q}\textbf{F}\|^2_{L^2}\Big)\Big|^{t}_{0}\nonumber
+\frac{1}{\tau}\|\Delta_{q}\textbf{u}\|^2_{L^2_{t}(L^2)}
\\&\leq&\frac{1}{2}\|\nabla\textbf{u}\|_{L_{T}^2(L^{\infty})}\Big(\|\Delta_{q}\varrho\|_{L^2_{T}(L^2)}\|\Delta_{q}\varrho\|_{L^\infty_{T}(L^2)}
+\|\Delta_{q}\textbf{u}\|_{L^2_{T}(L^2)}\|\Delta_{q}\textbf{u}\|_{L^\infty_{T}(L^2)}\Big)
\nonumber
\\&&+\frac{\gamma-1}{2}\|\nabla\varrho\|_{L_{T}^{\infty}(L^{\infty})}\|\Delta_{q}\varrho\|_{L^2_{T}(L^2)}\|\Delta_{q}\textbf{u}\|_{L^2_{T}(L^2)}
+\|[\textbf{u},\Delta_{q}]\cdot\nabla\varrho\|_{L^2_{T}(L^2)}\|\Delta_{q}\varrho\|_{L^2_{T}(L^2)}\nonumber
\\&&+\|[\textbf{u},\Delta_{q}]\cdot\nabla\textbf{u}\|_{L^2_{T}(L^2)}\|\Delta_{q}\textbf{u}\|_{L^2_{T}(L^2)}
+\frac{\gamma-1}{2}\|[\varrho,\Delta_{q}]\nabla
\varrho\|_{L^2_{T}(L^2)}\|\Delta_{q}\textbf{u}\|_{L^2_{T}(L^2)}\nonumber
\\&&+\frac{\gamma-1}{2}\|[\varrho,\Delta_{q}]\mathrm{div}\textbf{u}\|_{L^2_{T}(L^2)}\|\Delta_{q}\varrho\|_{L^2_{T}(L^2)}
+\varepsilon\|\Delta_{q}(\textbf{u}\times\mathbf{F})\|_{L^2_{T}(L^2)}\|\Delta_{q}\textbf{u}\|_{L^2_{T}(L^2)}
\nonumber
\\&& \hspace{7mm}+\frac{1}{\bar{n}}\|\Delta_{q}(h(\varrho)\textbf{u})\|_{L^1_{T}(L^2)}\|\Delta_{q}\textbf{E}\|_{L^\infty_{T}(L^2)}.\label{R-E33}
\end{eqnarray}
There exists a constant $\mu_{2}>0$ independent of
$(\tau,\varepsilon)$ after multiplying the factor $2^{2q\sigma}$ on
both sides of (\ref{R-E33}), such that
\begin{eqnarray}
&&2^{2q\sigma}\|\Delta_{q}W\|^2_{L^2}+\frac{\mu_{2}}{\tau}2^{2q\sigma}\|\Delta_{q}\textbf{u}\|^2_{L^2_{t}(L^2)}
\nonumber
\\&\leq&C2^{2q\sigma}\|\Delta_{q}W_{0}\|^2_{L^2}\nonumber
\\&&+C\Big\{\|\textbf{u}\|_{\widetilde{L}_{T}^2(B^{\sigma}_{2,1})}\Big(c^2_{q}\|\varrho\|_{\widetilde{L}_{T}^2(B^{\sigma}_{2,1})}\|\varrho\|_{\widetilde{L}_{T}^\infty(B^{\sigma}_{2,1})}+
c^2_{q}\|\textbf{u}\|_{\widetilde{L}_{T}^2(B^{\sigma}_{2,1})}\|\textbf{u}\|_{\widetilde{L}_{T}^\infty(B^{\sigma}_{2,1})}\Big)\nonumber
\\&&+c^2_{q}\|\varrho\|_{\widetilde{L}_{T}^{\infty}(B^{\sigma}_{2,1})}\|\varrho\|_{\widetilde{L}_{T}^2(B^{\sigma}_{2,1})}
\|\textbf{u}\|_{\widetilde{L}_{T}^2(B^{\sigma}_{2,1})}+c^2_{q}\|\textbf{u}\|_{\widetilde{L}_{T}^{\infty}(B^{\sigma}_{2,1})}\|\textbf{u}\|^2_{\widetilde{L}_{T}^2(B^{\sigma}_{2,1})}
\nonumber
\\&&
+c^2_{q}\|\textbf{u}\times\mathbf{F}\|_{\widetilde{L}_{T}^2(B^{\sigma}_{2,1})}\|\textbf{u}\|_{\widetilde{L}_{T}^2(B^{\sigma}_{2,1})}
+c^2_{q}\|h(\varrho)\textbf{u}\|_{\widetilde{L}_{T}^1(B^{\sigma}_{2,1})}\|\textbf{E}\|_{\widetilde{L}_{T}^\infty(B^{\sigma}_{2,1})}\Big\},\label{R-E34}
\end{eqnarray}
where we used Remark \ref{rem2.1}, Lemma \ref{lem2.2} and the
smallness of $\varepsilon(0<\varepsilon\leq1)$; Here and below $C>0$
denotes a uniform constant independent of $(\tau,\varepsilon)$;
$\{c_{q}\}$ denotes some sequence which satisfies $\|(c_{q})\|_{
{l^{1}}}\leq 1$ although each $\{c_{q}\}$ is possibly different in
(\ref{R-E34}).

Then, with aid of Young's inequality($\sqrt{fg}\leq (f+g)/2,\
f,g\geq0$), it follows from Proposition \ref{prop2.2} and
Proposition \ref{prop2.3} that
\begin{eqnarray}
&&2^{q\sigma}\|\Delta_{q}W\|_{L^\infty_{T}(L^2)}+\sqrt{\frac{\mu_{2}}{\tau}}2^{q\sigma}\|\Delta_{q}\textbf{u}\|_{L^2_{T}(L^2)}
\nonumber
\\&\leq&C2^{q\sigma}\|\Delta_{q}W_{0}\|_{L^2}\nonumber
\\&&+Cc_{q}\sqrt{\|\varrho\|_{\widetilde{L}_{T}^{\infty}(B^{\sigma}_{2,1})}}\Big(\sqrt{\tau}\|\varrho\|_{\widetilde{L}_{T}^2(B^{\sigma}_{2,1})}+\frac{1}{\sqrt{\tau}}\|\textbf{u}\|_{\widetilde{L}_{T}^2(B^{\sigma}_{2,1})}\Big)
\nonumber
\\&&+Cc_{q}\sqrt{\|\textbf{u}\|_{\widetilde{L}_{T}^{\infty}(B^{\sigma}_{2,1})}}\frac{1}{\sqrt{\tau}}\|\textbf{u}\|_{\widetilde{L}_{T}^2(B^{\sigma}_{2,1})}
+Cc_{q}\sqrt{\|\textbf{F}\|_{\widetilde{L}_{T}^{\infty}(B^{\sigma}_{2,1})}}
\frac{1}{\sqrt{\tau}}\|\textbf{u}\|_{\widetilde{L}_{T}^2(B^{\sigma}_{2,1})}\nonumber
\\&&+Cc_{q}\sqrt{\|\textbf{E}\|_{\widetilde{L}_{T}^{\infty}(B^{\sigma}_{2,1})}}\Big(\sqrt{\tau}\|\varrho\|_{\widetilde{L}_{T}^2(B^{\sigma}_{2,1})}+\frac{1}{\sqrt{\tau}}\|\textbf{u}\|_{\widetilde{L}_{T}^2(B^{\sigma}_{2,1})}\Big).
\label{R-E35}
\end{eqnarray}
Hence, summing up (\ref{R-E35}) on $q\geq-1$ yields
\begin{eqnarray*}
&&\|W\|_{\widetilde{L}_{T}^{\infty}(B^{\sigma}_{2,1})}+\sqrt{\frac{\mu_{2}}{\tau}}\|\textbf{u}\|_{L^2_{T}(B^{\sigma}_{2,1})}
\nonumber
\\&\leq&C\|W_{0}\|_{B^{\sigma}_{2,1}}+C\sqrt{\|W\|_{\widetilde{L}_{T}^{\infty}(B^{\sigma}_{2,1})}}
\|(\sqrt{\tau}\varrho,\frac{1}{\sqrt{\tau}}\textbf{u})\|_{\widetilde{L}_{T}^2(B^{\sigma}_{2,1})},
\end{eqnarray*}
which is just the desired inequality (\ref{R-E32}).
\end{proof}\\

\begin{lemma}\label{lem4.2}
If $W\in \widetilde{\mathcal{C}}_{T}(B^{\sigma}_{2,1})\cap
\widetilde{\mathcal{C}}^1_{T}(B^{\sigma-1}_{2,1})$ is a solution of
(\ref{R-E17})-(\ref{R-E18}) for any $T>0$ and
$0<\tau,\varepsilon\leq1$, then the following estimate holds:
\begin{eqnarray}
&&\sqrt{\tau}\|\varrho\|_{\widetilde{L}^2_{T}(B^{\sigma}_{2,1})}\nonumber\\&\leq&C(\|(\varrho,\mathbf{u})\|_{\widetilde{L}^\infty_{T}(B^{\sigma}_{2,1})}+\|(\varrho_{0},\mathbf{u}_{0})\|_{B^{\sigma}_{2,1}})
\nonumber\\&&+C\Big\{\frac{1}{\sqrt{\tau}}\|\mathbf{u}\|_{\widetilde{L}^2_{T}(B^{\sigma}_{2,1})}
+\sqrt{\|(\varrho,\mathbf{u},\mathbf{F})\|_{\widetilde{L}^\infty_{T}(B^{\sigma}_{2,1})}}\Big\|\Big(\sqrt{\tau}\varrho,\frac{1}{\sqrt{\tau}}\mathbf{u}\Big)\Big\|_{\widetilde{L}^2_{T}(B^{\sigma}_{2,1})}\Big\}.\label{R-E36}
\end{eqnarray}
where $C$ is a uniform positive constant independent of
$(\tau,\varepsilon)$.
\end{lemma}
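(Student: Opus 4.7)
The plan is to extract the dissipation of $\varrho$ by a Shizuta--Kawashima-type cross pairing, with the Gauss constraint $\nabla\cdot\mathbf{E}=-h(\varrho)$ supplying an additional zeroth-order damping that covers the low-frequency block $\Delta_{-1}\varrho$ which $\|\Delta_q\nabla\varrho\|$ alone cannot control. Concretely, I will apply $\Delta_q$ to the momentum equation in (\ref{R-E17}) and test against $\nabla\Delta_q\varrho$ in $L^2$. The principal term $\bar\psi\nabla\varrho$ immediately yields the positive contribution $\bar\psi\|\Delta_q\nabla\varrho\|_{L^2}^2$. To move the time derivative outside the inner product I use $\langle\partial_t\Delta_q\mathbf{u},\nabla\Delta_q\varrho\rangle=\tfrac{d}{dt}\langle\Delta_q\mathbf{u},\nabla\Delta_q\varrho\rangle-\langle\Delta_q\mathbf{u},\nabla\Delta_q\partial_t\varrho\rangle$, and substitute $\partial_t\varrho=-\bar\psi\,\mathrm{div}\,\mathbf{u}+f$ from the continuity equation, which after integration by parts trades the second piece for an exchange term proportional to $\bar\psi\|\Delta_q\mathrm{div}\,\mathbf{u}\|_{L^2}^2$.

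The crucial observation is the integration by parts $\langle\Delta_q\mathbf{E},\nabla\Delta_q\varrho\rangle=-\langle\Delta_q(\nabla\cdot\mathbf{E}),\Delta_q\varrho\rangle=\langle\Delta_q h(\varrho),\Delta_q\varrho\rangle$. Splitting $h(\varrho)=h'(0)\varrho+\tilde h(\varrho)$ with $h'(0)>0$ (which follows directly from the explicit formula for $h$), the linear part contributes a second positive term $h'(0)\|\Delta_q\varrho\|_{L^2}^2$, while $\tilde h(\varrho)=O(\varrho^2)$ yields a controllable nonlinearity via Proposition \ref{prop2.3}. These two positive contributions together control both the high-frequency part of $\varrho$ (through $\|\Delta_q\nabla\varrho\|$ and Bernstein) and the low-frequency block (through $\|\Delta_{-1}\varrho\|$).

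Multiplying the resulting identity by $\tau$ and integrating in time is the scaling step that accounts for the prefactor $\sqrt\tau$ on the left of (\ref{R-E36}). The cross integral $\tfrac{1}{\tau}\int_0^t\langle\Delta_q\mathbf{u},\nabla\Delta_q\varrho\rangle$ produced by the damping is split by Young's inequality into $\tfrac{C}{\tau}\|\Delta_q\mathbf{u}\|_{L^2_t(L^2)}^2$, which after $\ell^1$-summation generates the $\tfrac{1}{\sqrt\tau}\|\mathbf{u}\|_{\widetilde L^2_T(B^\sigma_{2,1})}$ piece on the right, and a small multiple of $\tau\|\Delta_q\nabla\varrho\|_{L^2_t(L^2)}^2$ absorbed into the left-hand side. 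The boundary term $\tau\langle\Delta_q\mathbf{u},\nabla\Delta_q\varrho\rangle\big|_0^t$ is dominated by the $\widetilde L^\infty_T(B^\sigma_{2,1})$ norms of $\varrho,\mathbf{u}$ and of the initial data. The exchange piece $\tau\bar\psi\|\Delta_q\mathrm{div}\,\mathbf{u}\|_{L^2_t(L^2)}^2$, after multiplication by $2^{2q(\sigma-1)}$ and $\ell^1$-summation in $q$, produces $\sqrt\tau\|\mathbf{u}\|_{\widetilde L^2_T(B^\sigma_{2,1})}$, which is bounded by $\tfrac{1}{\sqrt\tau}\|\mathbf{u}\|_{\widetilde L^2_T(B^\sigma_{2,1})}$ since $\tau\le 1$. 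Nonlinear pieces coming from $\mathbf{u}\cdot\nabla\mathbf{u}$, $\varrho\nabla\varrho$, the Lorentz terms $\varepsilon\Delta_q\mathbf{u}\times\overline{\mathbf B}$ and $\varepsilon\Delta_q(\mathbf{u}\times\mathbf{F})$, and from $\tilde h(\varrho)$ are estimated by Proposition \ref{prop2.2} and reshaped through $\sqrt{ab}\le(a+b)/2$ so as to match the structure $\sqrt{\|(\varrho,\mathbf{u},\mathbf{F})\|_{\widetilde L^\infty_T(B^\sigma_{2,1})}}\,\|(\sqrt\tau\varrho,\tfrac{1}{\sqrt\tau}\mathbf{u})\|_{\widetilde L^2_T(B^\sigma_{2,1})}$ appearing on the right of (\ref{R-E36}).

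Multiplying by $2^{2q\sigma}$, taking square roots, and summing over $q\ge-1$ via the $\ell^1$ triangle inequality then delivers the announced estimate. The main technical hurdle I anticipate is the careful bookkeeping of the small parameters $\tau$ and $\varepsilon$ in every nonlinear contribution, and in particular verifying that the Lorentz couplings (which carry a factor of $\varepsilon$ but pair with $\nabla\Delta_q\varrho$ at full regularity $\sigma$) together with the $\tau\|\mathrm{div}\,\mathbf{u}\|^2$ exchange term can both be absorbed using only $\tau,\varepsilon\le 1$, without introducing any inverse power of $\varepsilon$ or any leftover factor of $\tau$ into the final right-hand side of (\ref{R-E36}).
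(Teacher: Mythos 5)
Your argument is the physical-space counterpart of the paper's proof: testing the $\Delta_q$-localized momentum equation against $\nabla\Delta_q\varrho$, moving the time derivative off by an integration by parts and substituting the continuity equation, is precisely what the paper's Fourier-side pairing with the Kawashima compensator $K(\xi)$ in (\ref{R-E43})--(\ref{R-E47}) unwinds to after multiplying by $|\xi|$ and applying Plancherel, and your use of the constraint $\nabla\cdot\mathbf{E}=-h(\varrho)$ to produce the zeroth-order coercivity $h'(0)\|\Delta_q\varrho\|_{L^2}^2$ covering the low-frequency block is exactly the computation in (\ref{R-E51}). The bookkeeping you flag as a hurdle --- reshaping the Lorentz and pressure nonlinearities into $\sqrt{\|\cdot\|_{\widetilde L^\infty_T(B^\sigma_{2,1})}}\,\|(\sqrt\tau\varrho,\mathbf{u}/\sqrt\tau)\|_{\widetilde L^2_T(B^\sigma_{2,1})}$, absorbing the exchange term $\tau\|\Delta_q\mathrm{div}\,\mathbf{u}\|_{L^2_t(L^2)}^2$ via $\tau\le1$, and weighting by $2^{2q(\sigma-1)}$ (not $2^{2q\sigma}$ as in your last paragraph, since $\|\nabla\Delta_q\varrho\|_{L^2}\approx 2^{q}\|\Delta_q\varrho\|_{L^2}$ already absorbs one power) before $\ell^1$-summation --- is carried out in (\ref{R-E53})--(\ref{R-E57}) exactly as you describe, so your proposal is essentially the paper's own proof in a different dress.
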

\begin{proof}
Set
$$ W_{I}=\left(%
\begin{array}{c}
  \varrho \\
  \textbf{u}\\\end{array}%
  \right),\
A^{I}_{j}(\textbf{u})=\left(%
\begin{array}{cc}
  u^{j} & \bar{\psi}e_{j}^\top \\
  \bar{\psi}e_{j} & u^{j}I_{N\times N} \\
\end{array}%
\right)$$
$$(I_{N\times N}\ \mbox{denotes the unit matrix of order}\  N $$$$\mbox{and}\ e_{j}\ \mbox{is}\ N\mbox{-dimensional
vector where the $j$th component is one, others are zero} ).$$ Then
the first two equations of (\ref{R-E17}) for $W_{I}$ can be written
as the following vector form
\begin{equation}\partial_{t}W_{I}+\sum_{j=1}^{N}A^{I}_{j}(\textbf{u})\partial_{x_{j}}W_{I}=\left(%
\begin{array}{c}
  -\frac{\gamma-1}{2}\varrho\mathrm{div}\textbf{u} \\
   -\frac{\textbf{u}}{\tau}-\frac{\gamma-1}{2}\varrho\nabla \varrho+\mathbf{G} \\
\end{array}%
\right),\label{R-E37}
\end{equation}
where
$\mathbf{G}:=-(\mathbf{E}+\varepsilon\mathbf{u}\times(\mathbf{F}+\overline{\mathbf{B}})).$

To capture the dissipation rate of $\varrho$,  we make the best use
of Shizuta-Kawashima skew-symmetric condition in Fourier spaces,
which was developed for general hyperbolic systems of balance laws
\cite{KY,Y}. Thanks to the isentropic Euler equations (\ref{R-E37}),
 the concrete information of skew-symmetry matrix $K(\xi)$ is well
known (\textit{e.g.} see \cite{CG}), which is very helpful to
estimate the coupled electromagnetic field
$(\mathbf{E},\mathbf{B})$. Now we state the structural condition.\\

\begin{lemma}[Shizuta-Kawashima] \label{lem4.3}For all ~$\xi\in \mathbf{R}^{N},\
\xi\neq0$, there exists a real skew-symmetric smooth matrix $K(\xi)$
which is defined in the unit sphere $\textbf{S}^{N-1}$:
\begin{eqnarray}
K(\xi)=\left(%
\begin{array}{cc}
  0 & \frac{\xi^\top}{|\xi|} \\
  -\frac{\xi}{|\xi|} & 0 \\
\end{array}%
\right),\label{R-E38}
\end{eqnarray}
such that
\begin{eqnarray}
K(\xi)\sum_{j=1}^{N}\xi_{j}A^{I}_{j}(0)=\left(%
\begin{array}{cc}
  \bar{\psi}|\xi| & 0 \\
  0 & -\bar{\psi}\frac{\xi\otimes\xi}{|\xi|} \\
\end{array}%
\right),\label{R-E39}
\end{eqnarray}
where $A^{I}_{j}$ is the matrix appearing in the system
(\ref{R-E37}).
\end{lemma}

First, we rewrite (\ref{R-E37}) into the linearized form
\begin{equation}\partial_{t}W_{I}+\sum_{j=1}^{N}A^{I}_{j}(0)\partial_{x_{j}}W_{I}
=\mathcal{G}+\left(%
\begin{array}{c}
  -\frac{\gamma-1}{2}\varrho\mathrm{div}\textbf{u} \\
   -\frac{\textbf{u}}{\tau}-\frac{\gamma-1}{2}\varrho\nabla \varrho+\textbf{G} \\
\end{array}%
\right),\label{R-E40}\end{equation}where
\begin{eqnarray}
\mathcal{G}=\sum_{j=1}^{N}\Big\{A^{I}_{j}(0)-A^{I}_{j}(\textbf{u})\Big\}\partial_{x_{j}}W_{I}.\label{R-E41}
\end{eqnarray}
Applying the operator $\Delta_{q}$ to the system (\ref{R-E40}) gives
\begin{eqnarray}&&\partial_{t}\Delta_{q}W_{I}+\sum_{j=1}^{N}A^{I}_{j}(0)\partial_{x_{j}}\Delta_{q}W_{I}
\nonumber\\&=&\Delta_{q}\mathcal{G}+\left(%
\begin{array}{c}
  -\frac{\gamma-1}{2}\Delta_{q}(\varrho\mathrm{div}\textbf{u}) \\
   -\frac{\Delta_{q}\textbf{u}}{\tau}-\frac{\gamma-1}{2}\Delta_{q}(\varrho\nabla \varrho)+\Delta_{q}\textbf{G}\\
\end{array}%
\right).\label{R-E42}
\end{eqnarray}
Then we perform the Fourier transform (in the space variable $x$)
for (\ref{R-E42}), multiply the resulting equation by
$-i\tau(\widehat{\Delta_{q}W_{I}})^{\ast}K(\xi)$($^{\ast}$
represents transpose and conjugator), and take the real part of each
term in the equality. Using the expression (\ref{R-E38}) of the
matrix $K(\xi)$ we obtain
\begin{eqnarray}
&& \tau\mathrm{Im}
\Big((\widehat{\Delta_{q}W_{I}})^{\ast}K(\xi)\frac{d}{dt}\widehat{\Delta_{q}W_{I}}\Big)+\tau(\widehat{\Delta_{q}W_{I}})^{\ast}K(\xi)
\Big(\sum_{j=1}^{N}\xi_{j}A^{I}_{j}(0)\Big)\widehat{\Delta_{q}W_{I}}\nonumber\\&=&
\tau\mathrm{Im}\Big((\widehat{\Delta_{q}W_{I}})^{\ast}K(\xi)(\widehat{\Delta_{q}\mathcal{G}})\Big)-\mathrm{Im}
\Big((\overline{\widehat{\Delta_{q}\varrho}})\frac{\xi^{\top}}{|\xi|}\widehat{\Delta_{q}\textbf{u}}\Big)+\tau\mathrm{Im}
\Big((\overline{\widehat{\Delta_{q}\varrho}})\frac{\xi^{\top}}{|\xi|}\widehat{\Delta_{q}\textbf{G}}\Big)
\nonumber\\&&+\frac{\gamma-1}{2}\tau\mathrm{Im}\Big(\overline{\widehat{\Delta_{q}\textbf{u}}}\cdot\frac{\xi}{|\xi|}\widehat{(\Delta_{q}(\varrho\mathrm{div}\textbf{u}))}\Big)
-\frac{\gamma-1}{2}\tau\mathrm{Im}\Big(\overline{\widehat{\Delta_{q}\varrho}}\frac{\xi^{\top}}{|\xi|}\widehat{(\Delta_{q}(\varrho\nabla
\varrho))}\Big).\label{R-E43}
\end{eqnarray}
The skew-symmetry of $K(\xi)$ implies the relation
\begin{eqnarray}
\mathrm{Im}
\Big((\widehat{\Delta_{q}W_{I}})^{\ast}K(\xi)\frac{d}{dt}\widehat{\Delta_{q}W_{I}}\Big)=\frac{1}{2}\frac{d}{dt}\mathrm{Im}
\Big((\widehat{\Delta_{q}W_{I}})^{\ast}K(\xi)\widehat{\Delta_{q}W_{I}}\Big).\label{R-E44}
\end{eqnarray}
Substituting (\ref{R-E39}) into the second term on the left-hand
side of (\ref{R-E43}), it is not difficult to get a lower bound.
Indeed, we have
\begin{eqnarray}
&&\tau\mathrm{Im}
\Big((\widehat{\Delta_{q}W_{I}})^{\ast}K(\xi)\frac{d}{dt}\widehat{\Delta_{q}W_{I}}\Big)+\tau(\widehat{\Delta_{q}W_{I}})^{\ast}K(\xi)
\Big(\sum_{j=1}^{N}\xi_{j}A^{I}_{j}(0)\Big)\widehat{\Delta_{q}W_{I}}\nonumber\\&\geq&
\frac{\tau}{2}\frac{d}{dt}\mathrm{Im}
\Big((\widehat{\Delta_{q}W_{I}})^{\ast}K(\xi)\widehat{\Delta_{q}W_{I}}\Big)+\bar{\psi}\tau|\xi||\widehat{\Delta_{q}W_{I}}|^2-2\bar{\psi}
\tau|\xi||\widehat{\Delta_{q}\textbf{u}}|^2.\label{R-E45}
\end{eqnarray}
With the help of Young inequality and the uniform boundedness of the
matrix $K(\xi)(\xi\neq 0)$, the right-side of (\ref{R-E43}) can be
estimated as
\begin{eqnarray}
&&\tau\mathrm{Im}\Big((\widehat{\Delta_{q}W_{I}})^{\ast}K(\xi)(\widehat{\Delta_{q}\mathcal{G}})\Big)-\mathrm{Im}
\Big((\overline{\widehat{\Delta_{q}\varrho}})\frac{\xi^{\top}}{|\xi|}\widehat{\Delta_{q}\textbf{u}}\Big)+\tau\mathrm{Im}
\Big((\overline{\widehat{\Delta_{q}\varrho}})\frac{\xi^{\top}}{|\xi|}\widehat{\Delta_{q}\textbf{G}}\Big)
\nonumber\\&&+\frac{\gamma-1}{2}\tau\mathrm{Im}\Big(\overline{\widehat{\Delta_{q}\textbf{u}}}\cdot\frac{\xi}{|\xi|}\widehat{(\Delta_{q}(\varrho\mathrm{div}\textbf{u}))}\Big)
-\frac{\gamma-1}{2}\tau\mathrm{Im}\Big(\overline{\widehat{\Delta_{q}\varrho}}\frac{\xi^{\top}}{|\xi|}\widehat{(\Delta_{q}(\varrho\nabla
\varrho))}\Big)\nonumber\\&\leq&
\frac{\bar{\psi}\tau}{2}|\xi||\widehat{\Delta_{q}W_{I}}|^2+\frac{C}{\tau|\xi|}|\widehat{\Delta_{q}\textbf{u}}|^2
+\tau|\widehat{\Delta_{q}W_{I}}||\widehat{\Delta_{q}\mathcal{G}}|+C\tau|\widehat{\Delta_{q}\textbf{u}}||\widehat{(\Delta_{q}(\varrho\mathrm{div}\textbf{u}))}|
\nonumber\\&&+C\tau|\widehat{\Delta_{q}\varrho}||\widehat{(\Delta_{q}(\varrho\nabla
\varrho)})|+\tau\mathrm{Im}
\Big((\overline{\widehat{\Delta_{q}\varrho}})\frac{\xi^{\top}}{|\xi|}\widehat{\Delta_{q}\textbf{G}}\Big),
\label{R-E46}
\end{eqnarray}
where $C>0$ is a constant independent of $(\tau,\varepsilon)$.
Combining the equality (\ref{R-E43}) and the inequality
(\ref{R-E45})-(\ref{R-E46}), we deduce that
\begin{eqnarray}
&&\frac{\bar{\psi}\tau}{2}|\xi||\widehat{\Delta_{q}W_{I}}|^2\nonumber\\&\leq&\frac{C}{\tau}\Big(|\xi|+\frac{1}{|\xi|}\Big)|\widehat{\Delta_{q}\textbf{u}}|^2
+\tau|\widehat{\Delta_{q}W_{I}}||\widehat{\Delta_{q}\mathcal{G}}|\nonumber\\&&+C\tau|\widehat{\Delta_{q}\textbf{u}}||\widehat{(\Delta_{q}(\varrho\mathrm{div}\textbf{u}))}|+C\tau|\widehat{\Delta_{q}\varrho}||\widehat{(\Delta_{q}(\varrho\nabla
\varrho)})|\nonumber\\&&+\tau\mathrm{Im}
\Big((\overline{\widehat{\Delta_{q}\varrho}})\frac{\xi^{\top}}{|\xi|}\widehat{\Delta_{q}\textbf{G}}\Big)-\frac{\tau}{2}\frac{d}{dt}\mathrm{Im}
\Big((\widehat{\Delta_{q}W_{I}})^{\ast}K(\xi)\widehat{\Delta_{q}W_{I}}\Big).\label{R-E47}
\end{eqnarray}
Multiplying (\ref{R-E47}) by $|\xi|$ and integrating it over
$[0,t]\times\mathbf{R}^{N}$, then using Plancherel's theorem yields
\begin{eqnarray}
&&\frac{\bar{\psi}\tau}{2}\int_{0}^{t}\|\Delta_{q}\nabla W_{I}\|^2_{L^2}d\varsigma\nonumber\\
&\leq&\frac{C}{\tau}\int_{0}^{t}(\|\Delta_{q}\textbf{u}\|^2_{L^2}+\|\Delta_{q}\nabla\textbf{u}\|^2_{L^2})d\varsigma+C\tau\int_{0}^{t}\|\Delta_{q}\nabla
W_{I}\|_{L^2}\|\Delta_{q}\mathcal{G}\|_{L^2}d\varsigma
\nonumber\\&&+C\tau\int_{0}^{t}\|\Delta_{q}\nabla\textbf{u}\|_{L^2}\|\Delta_{q}(\varrho\mathrm{div}\textbf{u})\|_{L^2}d\varsigma
+C\tau\int_{0}^{t}\|\Delta_{q}\nabla\varrho\|_{L^2}\|\Delta_{q}(\varrho\nabla
\varrho)\|_{L^2}d\varsigma\nonumber\\&&+\tau\int_{0}^{t}\mathrm{Im}\int
\Big((\overline{\widehat{\Delta_{q}\varrho}})\xi^{\top}\widehat{\Delta_{q}\textbf{G}}\Big)d\xi
d\varsigma-\frac{\tau}{2}\mathrm{Im}\int|\xi|
\Big((\widehat{\Delta_{q}W_{I}})^{\ast}K(\xi)\widehat{\Delta_{q}W_{I}}\Big)d\xi\Big|^t_0.
\label{R-E48}\end{eqnarray} The matrix $K(\xi)$ is uniform bounded
when $\xi\in\mathbf{R}^{N}(\xi\neq 0)$, thus we have
\begin{eqnarray}
&&-\frac{\tau}{2}\mathrm{Im}\int|\xi|
\Big((\widehat{\Delta_{q}W_{I}})^{\ast}K(\xi)\widehat{\Delta_{q}W_{I}}\Big)d\xi\Big|^t_0
\nonumber\\
&\leq&
C\tau\Big(\int(1+|\xi|^2)|\widehat{\Delta_{q}W_{I}(t)}|^2d\xi+\int(1+|\xi|^2)|\widehat{\Delta_{q}W_{I}(0)}|^2d\xi\Big)\nonumber\\
&\leq& C(\|\Delta_{q}W_{I}(t)\|^2_{L^2}+\|\Delta_{q}\nabla
W_{I}(t)\|^2_{L^2}+\|\Delta_{q}W_{I}(0)\|^2_{L^2}+\|\Delta_{q}\nabla
W_{I}(0)\|^2_{L^2}),\label{R-E49}
\end{eqnarray}
where we used the smallness of $\tau (0<\tau\leq1)$ in the last
step.

Next we turn to estimate the coupled electromagnetic field
$(\mathbf{E},\mathbf{B})$:
\begin{eqnarray}
&&\tau\int_{0}^{t}\mathrm{Im}\int
\Big((\overline{\widehat{\Delta_{q}\varrho}})\xi^{\top}\widehat{\Delta_{q}\textbf{G}}\Big)d\xi
d\varsigma\nonumber\\&=&-\tau\int_{0}^{t}\mathrm{Im}\int\Big((\overline{\widehat{\Delta_{q}\varrho}})\xi^{\top}\widehat{\Delta_{q}\textbf{E}}\Big)d\xi
d\varsigma\nonumber\\&&-\tau\varepsilon\int_{0}^{t}\mathrm{Im}\int
\Big((\overline{\widehat{\Delta_{q}\varrho}})\xi^{\top}\widehat{\Delta_{q}(\textbf{u}\times(\textbf{F}+\overline{\mathbf{B}}))}\Big)d\xi
d\varsigma
\nonumber\\&\equiv&\mathbf{I}_{1}+\mathbf{I}_{2},\label{R-E50}
\end{eqnarray}
where the first term $\mathbf{I}_{1}$ is estimated as follows
\begin{eqnarray}
\mathbf{I}_{1}&=&-\tau\int_{0}^{t}\mathrm{Im}\int\Big((\overline{\widehat{\Delta_{q}\varrho}})\xi^{\top}\widehat{\Delta_{q}\textbf{E}}\Big)d\xi
d\varsigma\nonumber\\&=&-\tau\Big\{-\int^{t}_{0}\frac{\mathrm{\emph{i}}}{2}\int\Big((\overline{\widehat{\Delta_{q}\varrho}})\xi^{\top}\widehat{\Delta_{q}\textbf{E}}\Big)d\xi
d\varsigma
+\int^{t}_{0}\frac{\mathrm{\emph{i}}}{2}\int\Big((\widehat{\Delta_{q}\varrho})\xi^{\top}\overline{\widehat{\Delta_{q}\textbf{E}}}\Big)d\xi d\varsigma\Big\}\nonumber\\
&=&-\frac{\tau}{2}\Big\{\int^{t}_{0}\int(\overline{\widehat{\Delta_{q}\nabla
\varrho}})\cdot\widehat{\Delta_{q}\textbf{E}}d\xi
d\varsigma+\int^{t}_{0}\int(\widehat{\Delta_{q}\nabla
\varrho})\cdot\overline{\widehat{\Delta_{q}\textbf{E}}}d\xi d\varsigma\Big\}\nonumber\\
 &=&-\tau(2\pi)^{N}\int^{t}_{0}\int\Delta_{q}\nabla
 \varrho\cdot\Delta_{q}\textbf{E}dxd\varsigma
\nonumber\\
 &=&\tau(2\pi)^{N}\int^{t}_{0}\int\Delta_{q}\varrho\Delta_{q}\mathrm{div}\textbf{E}dxd\varsigma\nonumber\\
&=&-\tau(2\pi)^{N}\int^{t}_{0}\int\Delta_{q}\varrho\Delta_{q}(h(\varrho)-h(0))dxd\varsigma\nonumber\\
&=&-\tau(P_{0}\gamma)^{-\frac{1}{2}}\bar{n}^{\frac{3-\gamma}{2}}(2\pi)^{N}\int^{t}_{0}\|\Delta_{q}\varrho\|^2_{L^2}d\varsigma-\tau(2\pi)^{N}\int^{t}_{0}\int\Delta_{q}\varrho\Delta_{q}(\tilde{h}(\varrho)\varrho)dxd\varsigma
\nonumber\\ \hspace{10mm}&\leq&
-\tau(P_{0}\gamma)^{-\frac{1}{2}}\bar{n}^{\frac{3-\gamma}{2}}(2\pi)^{N}\int^{t}_{0}\|\Delta_{q}\varrho\|^2_{L^2}d\varsigma+C\tau\int^{t}_{0}\|\Delta_{q}(\tilde{h}(\varrho)\varrho)\|_{L^2}\|\Delta_{q}\varrho\|_{L^2}d\varsigma.
\label{R-E51}\end{eqnarray} Here,
$\tilde{h}(\varrho)=\int_{0}^{1}h'(\epsilon
\varrho)d\varsigma-(P_{0}\gamma)^{-\frac{1}{2}}\bar{n}^{\frac{3-\gamma}{2}}$
is a smooth function on $\{\varrho|\frac{\gamma-1}{2}\epsilon
\varrho+\bar{\psi}>0,\ \epsilon\in[0,1]\}$ satisfying $
\tilde{h}(0)=0$.

In a similar way, $\mathbf{I}_{2}$ is estimated as
\begin{eqnarray}
\mathbf{I}_{2}&=&-\tau\varepsilon\int_{0}^{t}\mathrm{Im}\int
\Big((\overline{\widehat{\Delta_{q}\varrho}})\xi^{\top}\widehat{\Delta_{q}(\textbf{u}\times(\textbf{F}+\overline{\mathbf{B}}))}\Big)d\xi
d\varsigma\nonumber\\
&\leq&
C\tau\varepsilon\int^{t}_{0}\Big(\|\Delta_{q}\mathrm{div}(\textbf{u}\times\overline{\mathbf{B}})\|_{L^2}+\|\Delta_{q}\mathrm{div}(\textbf{u}\times\mathbf{F})\|_{L^2}\Big)\|\Delta_{q}\varrho\|_{L^2}d\varsigma.\label{R-E52}
\end{eqnarray}
Thus, combining with (\ref{R-E48})-(\ref{R-E52}), we get
\begin{eqnarray}
&&\frac{\bar{\psi}\tau}{2}\int^t_0\|\Delta_{q}\nabla W_{I}\|^2_{L^2}d\varsigma+\tau(P_{0}\gamma)^{-\frac{1}{2}}\bar{n}^{\frac{3-\gamma}{2}}(2\pi)^{N}\int^t_0\|\Delta_{q}\varrho\|^2_{L^2}d\varsigma\nonumber\\
&\leq&C(\|\Delta_{q}W_{I}(t)\|^2_{L^2}+\|\Delta_{q}\nabla
W_{I}(t)\|^2_{L^2}+\|\Delta_{q}W_{I}(0)\|^2_{L^2}+\|\Delta_{q}\nabla
W_{I}(0)\|^2_{L^2})
\nonumber\\&&+\frac{C}{\tau}\int_{0}^{t}(\|\Delta_{q}\textbf{u}\|^2_{L^2}+\|\Delta_{q}\nabla\textbf{u}\|^2_{L^2})d\varsigma+C\tau\int_{0}^{t}\|\Delta_{q}\nabla
W_{I}\|_{L^2}\|\Delta_{q}\mathcal{G}\|_{L^2}d\varsigma \nonumber\\&&
+C\tau\int_{0}^{t}\|\Delta_{q}\nabla\textbf{u}\|_{L^2}\|\Delta_{q}(\varrho\mathrm{div}\textbf{u})\|_{L^2}d\varsigma
+C\tau\int_{0}^{t}\|\Delta_{q}\nabla\varrho\|_{L^2}\|\Delta_{q}(\varrho\nabla
\varrho)\|_{L^2}d\varsigma\nonumber\\&&+C\tau\int^{t}_{0}\|\Delta_{q}(\tilde{h}(\varrho)\varrho)\|_{L^2}\|\Delta_{q}\varrho\|_{L^2}d\varsigma\nonumber\\&&
+C\tau\varepsilon\int^{t}_{0}\Big(\|\Delta_{q}\mathrm{div}(\textbf{u}\times\overline{\mathbf{B}})\|_{L^2}
+\|\Delta_{q}\mathrm{div}(\textbf{u}\times\mathbf{F})\|_{L^2}\Big)\|\Delta_{q}\varrho\|_{L^2}d\varsigma.\label{R-E53}\end{eqnarray}
Recalling Lemma \ref{lem2.1}, we have
$$\|\Delta_{q}\nabla f\|_{L^2}\approx
2^{q}\|\Delta_{q}f\|_{L^2}\ (q\geq0).$$ Note that this fact, from
(\ref{R-E53}), we get the high-frequency part of
$\|\Delta_{q}\varrho\|_{L^2_{t}(L^2)}(q\geq0)$:
\begin{eqnarray}
&&\frac{\bar{\psi}\tau}{2}2^{2q}\|\Delta_{q}\varrho\|^2_{L^2_{t}(L^2)}\nonumber\\
&\leq&C(2^{2q}\|\Delta_{q}W_{I}\|^2_{L^\infty_{T}(L^2)}+2^{2q}\|\Delta_{q}W_{I}(0)\|^2_{L^2})
+C\Big\{\frac{2^{2q}}{\tau}\|\Delta_{q}\textbf{u}\|^2_{L^2_{T}(L^2)}\nonumber\\&&\hspace{3mm}+\tau\varepsilon\|\Delta_{q}\varrho\|_{L^2_{T}(L^2)}
\|\Delta_{q}\mathrm{div}(\textbf{u}\times\overline{\mathbf{B}})\|_{L^2_{T}(L^2)}+2^{q}\tau\|\Delta_{q}
W_{I}\|_{L^2_{T}(L^2)}\|\Delta_{q}\mathcal{G}\|_{L^2_{T}(L^2)}
\nonumber\\&&\hspace{3mm}+2^{q}\tau\|\Delta_{q}\textbf{u}\|_{L^2_{T}(L^2)}\|\Delta_{q}(\varrho\mathrm{div}\textbf{u})\|_{L^2_{T}(L^2)}
+2^{q}\tau\|\Delta_{q}\varrho\|_{L^2_{T}(L^2)}\|\Delta_{q}(\varrho\nabla
\varrho)\|_{L^2_{T}(L^2)}
\nonumber\\&&\hspace{3mm}+\tau\|\Delta_{q}(\tilde{h}(\varrho)\varrho)\|_{L^2_{T}(L^2)}\|\Delta_{q}\varrho\|_{L^2_{T}(L^2)}
+\tau\varepsilon\|\Delta_{q}\mathrm{div}(\textbf{u}\times\mathbf{F})\|_{L^2_{T}(L^2)}\|\Delta_{q}\varrho\|_{L^2_{T}(L^2)}\Big\}\label{R-E54}
\end{eqnarray}
and the corresponding low-frequency part:
\begin{eqnarray}
&&\tau(P_{0}\gamma)^{-\frac{1}{2}}\bar{n}^{\frac{3-\gamma}{2}}(2\pi)^{N}\|\Delta_{-1}\varrho\|^2_{L^2_{t}(L^2)}\nonumber\\
&\leq&C(\|\Delta_{-1}W_{I}\|^2_{L^\infty_{T}(L^2)}+\|\Delta_{-1}W_{I}(0)\|^2_{L^2})
\nonumber\\&&+C\Big\{\frac{1}{\tau}\|\Delta_{-1}\textbf{u}\|^2_{L^2_{T}(L^2)}+\tau\varepsilon\|\Delta_{-1}\varrho\|_{L^2_{T}(L^2)}
\|\Delta_{-1}\mathrm{div}(\textbf{u}\times\overline{\mathbf{B}})\|_{L^2_{T}(L^2)}\nonumber\\&&+\tau\|\Delta_{-1}
W_{I}\|_{L^2_{T}(L^2)}\|\Delta_{-1}\mathcal{G}\|_{L^2_{T}(L^2)}
+\tau\|\Delta_{-1}\textbf{u}\|_{L^2_{T}(L^2)}\|\Delta_{-1}(\varrho\mathrm{div}\textbf{u})\|_{L^2_{T}(L^2)}
\nonumber\\&&+\tau\|\Delta_{-1}\varrho\|_{L^2_{T}(L^2)}\|\Delta_{-1}(\varrho\nabla
\varrho)\|_{L^2_{T}(L^2)}
+\tau\|\Delta_{-1}(\tilde{h}(\varrho)\varrho)\|_{L^2_{T}(L^2)}\|\Delta_{-1}\varrho\|_{L^2_{T}(L^2)}
\nonumber\\&&+\tau\varepsilon\|\Delta_{-1}\mathrm{div}(\textbf{u}\times\mathbf{F})\|_{L^2_{T}(L^2)}\|\Delta_{-1}\varrho\|_{L^2_{T}(L^2)}\Big\}.\label{R-E55}
\end{eqnarray}
To conclude, we combine (\ref{R-E54})-(\ref{R-E55}) and multiply the
factor $2^{2q(\sigma-1)}$ on both sides of the resulting inequality
to obtain
\begin{eqnarray}
&&\tau2^{2q\sigma}\|\Delta_{q}\varrho\|^2_{L^2_{t}(L^2)}\nonumber\\
&\leq&Cc^2_{q}(\|W_{I}\|^2_{\widetilde{L}^\infty_{T}(B^{\sigma}_{2,1})}+\|W_{I}(0)\|^2_{B^{\sigma}_{2,1}})
\nonumber\\&&+C\Big\{\frac{c^2_{q}}{\tau}\|\textbf{u}\|^2_{\widetilde{L}^2_{T}(B^{\sigma}_{2,1})}+\tau\varepsilon
c^2_{q}\|\varrho\|_{\widetilde{L}^2_{T}(B^{\sigma}_{2,1})}\|\mathrm{div}(\textbf{u}\times\overline{\mathbf{B}})\|_{\widetilde{L}^2_{T}(B^{\sigma-1}_{2,1})}
\nonumber\\&&+\tau
c^2_{q}\|W_{I}\|_{\widetilde{L}^2_{T}(B^{\sigma}_{2,1})}\|\mathcal{G}\|_{\widetilde{L}^2_{T}(B^{\sigma-1}_{2,1})}
+\tau
c^2_{q}\|\textbf{u}\|_{\widetilde{L}^2_{T}(B^{\sigma}_{2,1})}\|\varrho\mathrm{div}\textbf{u}\|_{\widetilde{L}^2_{T}(B^{\sigma-1}_{2,1})}
\nonumber\\&&+\tau
c^2_{q}\|\varrho\|_{\widetilde{L}^2_{T}(B^{\sigma}_{2,1})}\Big(\|\varrho\nabla
\varrho\|_{\widetilde{L}^2_{T}(B^{\sigma-1}_{2,1})}
+\|\tilde{h}(\varrho)\varrho\|_{\widetilde{L}^2_{T}(B^{\sigma-1}_{2,1})}\nonumber\\&&+\varepsilon\|\mathrm{div}(\textbf{u}\times\mathbf{F})\|_{\widetilde{L}^2_{T}(B^{\sigma-1}_{2,1})}\Big)\Big\},\label{R-E56}
\end{eqnarray}
where $\{c_{q}\}$ denotes some sequence which satisfies
$\|(c_{q})\|_{ {l^{1}}}\leq 1$.

By employing Young's inequality, we are led to the estimate
\begin{eqnarray}
&&\sqrt{\tau}2^{q\sigma}\|\Delta_{q}\varrho\|_{L^2_{T}(L^2)}\nonumber\\&\leq&Cc_{q}(\|W_{I}\|_{\widetilde{L}^\infty_{T}(B^{\sigma}_{2,1})}+\|W_{I}(0)\|_{B^{\sigma}_{2,1}})
\nonumber\\&&+C\Big\{\frac{c_{q}}{\sqrt{\tau}}\|\textbf{u}\|_{\widetilde{L}^2_{T}(B^{\sigma}_{2,1})}+\sqrt{\tau\varepsilon}c_{q}\sqrt{\|\varrho\|_{\widetilde{L}^2_{T}(B^{\sigma}_{2,1})}\|\textbf{u}\|_{\widetilde{L}^2_{T}(B^{\sigma}_{2,1})}}
\nonumber\\&&+c_{q}\sqrt{\|\textbf{u}\|_{\widetilde{L}^\infty_{T}(B^{\sigma}_{2,1})}}\Big(\sqrt{\tau}\|\varrho\|_{\widetilde{L}^2_{T}(B^{\sigma}_{2,1})}+\frac{1}{\sqrt{\tau}}\|\textbf{u}\|_{\widetilde{L}^2_{T}(B^{\sigma}_{2,1})}\Big)
\nonumber\\&&+\frac{c_{q}}{\sqrt{\tau}}\sqrt{\|\varrho\|_{\widetilde{L}^\infty_{T}(B^{\sigma}_{2,1})}}\|\textbf{u}\|_{\widetilde{L}^2_{T}(B^{\sigma}_{2,1})}
\nonumber\\&&+c_{q}\sqrt{\|(\varrho,\mathbf{F})\|_{\widetilde{L}^\infty_{T}(B^{\sigma}_{2,1})}}\Big(\sqrt{\tau}\|\varrho\|_{\widetilde{L}^2_{T}(B^{\sigma}_{2,1})}+\frac{1}{\sqrt{\tau}}\|\textbf{u}\|_{\widetilde{L}^2_{T}(B^{\sigma}_{2,1})}\Big)\Big\}.\label{R-E57}
\end{eqnarray}
In the end, with the help of the smallness of $(\tau,\varepsilon)$,
summing up (\ref{R-E57}) on $q\geq-1$ concludes the inequality
(\ref{R-E36}) readily.
\end{proof}\\

\begin{lemma}\label{lem4.4}
If $W\in \widetilde{\mathcal{C}}_{T}(B^{\sigma}_{2,1})\cap
\widetilde{\mathcal{C}}^1_{T}(B^{\sigma-1}_{2,1})$ is a solution of
(\ref{R-E17})-(\ref{R-E18}) for any $T>0$ and
$0<\tau,\varepsilon\leq1$, then the following estimate holds:
\begin{eqnarray}
&&\sqrt{\tau\varepsilon}\|\mathbf{E}\|_{\widetilde{L}^2_{T}(B^{\sigma}_{2,1})}\nonumber\\
&\leq&C(\|(\mathbf{u},\mathbf{E},\mathbf{F})\|_{\widetilde{L}^{\infty}_{T}(B^{\sigma}_{2,1})}+\|(\mathbf{u}_{0},\mathbf{E}_{0},\mathbf{F}_{0})\|_{B^{\sigma}_{2,1}})
\nonumber\\&&+C\Big\{\Big\|\Big(\sqrt{\tau}\varrho,\frac{\mathbf{u}}{\sqrt{\tau}}\Big)\Big\|_{\widetilde{L}^2_{T}(B^{\sigma}_{2,1})}
+\Big\|\frac{\nabla\mathbf{F}}{\sqrt{\varepsilon}}\Big\|_{\widetilde{L}^2_{T}(B^{\sigma-1}_{2,1})}
\nonumber\\&&
+\sqrt{\|(\varrho,\mathbf{u},\mathbf{F})\|_{\widetilde{L}^\infty_{T}(B^{\sigma}_{2,1})}}\Big[\Big\|\Big(\sqrt{\tau}\varrho,\frac{\mathbf{u}}{\sqrt{\tau}},\sqrt{\tau\varepsilon}\mathbf{E}\Big)\Big\|_{\widetilde{L}^2_{T}(B^{\sigma}_{2,1})}
+\Big\|\frac{\nabla\mathbf{F}}{\sqrt{\varepsilon}}\Big\|_{\widetilde{L}^2_{T}(B^{\sigma-1}_{2,1})}\Big]
\Big\},\label{R-E58}
\end{eqnarray}
where $C>0$ is a uniform constant independent of
$(\tau,\varepsilon)$.
\end{lemma}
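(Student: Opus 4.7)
The plan is to extract the weak dissipation of $\mathbf{E}$ by a cross-term multiplier, exploiting the Lorentz source $-\mathbf{E}$ in the momentum equation (second line of (\ref{R-E17})). I will apply $\Delta_q$ to that equation and test against $\Delta_q\mathbf{E}$ in $L^2(\mathbf{R}^N)$; the source term contributes exactly $-\|\Delta_q\mathbf{E}\|_{L^2}^2$, which is precisely the quantity I wish to control. The time-derivative piece $(\partial_t\Delta_q\mathbf{u},\Delta_q\mathbf{E})$ is handled by integration by parts in $t$, writing it as $\frac{d}{dt}(\Delta_q\mathbf{u},\Delta_q\mathbf{E})-(\Delta_q\mathbf{u},\partial_t\Delta_q\mathbf{E})$, and then substituting the Amp\`ere equation $\partial_t\mathbf{E}=\frac{1}{\varepsilon}\nabla\times\mathbf{F}+\bar n\mathbf{u}+h(\varrho)\mathbf{u}$; this produces the curl term $\frac{1}{\varepsilon}(\Delta_q\mathbf{u},\Delta_q\nabla\times\mathbf{F})$, the benign quadratic $\bar n\|\Delta_q\mathbf{u}\|^2$, and a nonlinear composition remainder.

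I then integrate the resulting identity over $t\in[0,T]$ and multiply throughout by $\tau\varepsilon$. This weight is the crux: it converts the singular $1/\varepsilon$ in front of the curl term into the harmless $\tau$, the singular $1/\tau$ from the relaxation into $\varepsilon$, and carries the boundary pairing $\tau\varepsilon[(\Delta_q\mathbf{u},\Delta_q\mathbf{E})]_0^T$ with a coefficient $\tau\varepsilon\leq 1$ that is absorbed into $\widetilde L^\infty_T(B^\sigma_{2,1})$ norms of $(\mathbf{u},\mathbf{E})$. The linear cross-terms $\tau\varepsilon\bar\psi(\Delta_q\nabla\varrho,\Delta_q\mathbf{E})$ and $\varepsilon(\Delta_q\mathbf{u},\Delta_q\mathbf{E})$ are split by Cauchy--Schwartz and the inequality $\sqrt{ab}\leq(a+b)/2$ (applied after square-rooting the block identity), so that one factor supplies $2^{q\sigma}\sqrt{\tau\varepsilon}\|\Delta_q\mathbf{E}\|$ for absorption on the left while the other produces $\|\Delta_q(\sqrt\tau\varrho)\|_{L^2_TL^2}$ or $\|\Delta_q(\mathbf{u}/\sqrt\tau)\|_{L^2_TL^2}$ at the $B^\sigma_{2,1}$ scale. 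The quadratic $\tau\varepsilon\bar n\|\Delta_q\mathbf{u}\|^2$ already sits at the correct scale after pulling out $\tau$. Commutator and composition remainders arising from $\mathbf{u}\cdot\nabla\mathbf{u}$, $\varrho\nabla\varrho$, $\varepsilon\mathbf{u}\times\mathbf{F}$ and $h(\varrho)\mathbf{u}$ are handled by Proposition~\ref{prop2.2}, Proposition~\ref{prop2.3} and Lemma~\ref{lem2.2}, each acquiring the small prefactor $\sqrt{\|(\varrho,\mathbf{u},\mathbf{F})\|_{\widetilde L^\infty_T(B^\sigma_{2,1})}}$ after a final $\sqrt{ab}$-split, exactly matching the nonlinear bracket on the right-hand side of (\ref{R-E58}).

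The technically most delicate term is $\tau\int_0^T(\Delta_q\mathbf{u},\Delta_q\nabla\times\mathbf{F})\,d\varsigma$. Cauchy--Schwartz gives the bound $\tau\|\Delta_q\mathbf{u}\|_{L^2_TL^2}\|\Delta_q\nabla\mathbf{F}\|_{L^2_TL^2}$; inserting the natural dissipation weights $\sqrt\tau$ and $\sqrt\varepsilon$ recasts it as $\tau^{3/2}\sqrt\varepsilon\,\|\Delta_q(\mathbf{u}/\sqrt\tau)\|\,\|\Delta_q(\nabla\mathbf{F}/\sqrt\varepsilon)\|$. The main obstacle is to arrange the split so that, after square-rooting and applying $\sqrt{ab}\leq(a+b)/2$, the $\nabla\mathbf{F}/\sqrt\varepsilon$ factor enters at the $B^{\sigma-1}_{2,1}$ level required by the statement of (\ref{R-E58}) rather than at the stronger $B^\sigma_{2,1}$ that a naive AM--GM would yield. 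I expect this to be resolved by transferring a spatial derivative from $\nabla\times\mathbf{F}$ onto $\mathbf{u}$ through the identity $(\Delta_q\mathbf{u},\Delta_q\nabla\times\mathbf{F})=(\nabla\times\Delta_q\mathbf{u},\Delta_q\mathbf{F})$ and using Bernstein's equivalence $\|\Delta_q\nabla f\|_{L^2}\approx 2^q\|\Delta_q f\|_{L^2}$ within the Chemin--Lerner $\ell^1$-summation in $q\geq -1$, all carried out in the smallness regime $0<\tau,\varepsilon\leq 1$.

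Combining all contributions yields a frequency-block inequality of the form $\sqrt{\tau\varepsilon}\cdot 2^{q\sigma}\|\Delta_q\mathbf{E}\|_{L^2_TL^2}\leq c_q\cdot(\text{right-hand side of }(\ref{R-E58}))$ with a summable sequence $(c_q)_{q\geq-1}\in\ell^1$, and summing in $q$ produces (\ref{R-E58}).
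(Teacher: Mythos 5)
Your approach is genuinely different from the paper's: you test the Lorentz source in the momentum equation against $\Delta_q\mathbf{E}$ for \emph{all} $q\geq -1$, whereas the paper uses that device only for the low-frequency block $q=-1$ (in $(\ref{R-E65})$--$(\ref{R-E69})$) and handles $q\geq 0$ by an entirely separate ``div-curl'' construction: the divergence constraint $\nabla\cdot\mathbf{E}=-h(\varrho)$ together with the Faraday/Amp\`ere coupling yields $\|\nabla\Delta_q\mathbf{E}\|_{L^2}^2$, which on annuli equals $2^{2q}\|\Delta_q\mathbf{E}\|_{L^2}^2$ by Bernstein. That extra $2^{2q}$ on the left is exactly what your scheme lacks, and this is where the gap appears. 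After multiplying your block identity by $\tau\varepsilon$ and rescaling by $2^{2q\sigma}$, your left-hand side is $\tau\varepsilon\,2^{2q\sigma}\|\Delta_q\mathbf{E}\|_{L^2_T(L^2)}^2$ with no extra $2^{2q}$, while the curl contribution coming from substituting Amp\`ere's law is
$\tau\,2^{2q\sigma}\|\Delta_q\mathbf{u}\|\,\|\Delta_q\nabla\times\mathbf{F}\|
= \tau\cdot 2^{q}\cdot\bigl(2^{q\sigma}\|\Delta_q\mathbf{u}\|\bigr)\bigl(2^{q(\sigma-1)}\|\Delta_q\nabla\mathbf{F}\|\bigr)$,
and the dangling $2^{q}$ grows with $q$ and cannot be absorbed. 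In the paper's high-frequency inequality $(\ref{R-E63})$--$(\ref{R-E64})$ the block is instead rescaled by $2^{2q(\sigma-1)}$, because the left side already carries the Bernstein factor $2^{2q}$; you are forced to rescale by $2^{2q\sigma}$ and the curl term overshoots by one derivative.

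The fix you sketch does not close the gap. Transferring the derivative via $(\Delta_q\mathbf{u},\nabla\times\Delta_q\mathbf{F})=(\nabla\times\Delta_q\mathbf{u},\Delta_q\mathbf{F})$ keeps the total derivative count unchanged, and the two-sided Bernstein equivalence $\|\Delta_q\nabla f\|_{L^2}\approx 2^q\|\Delta_q f\|_{L^2}$ holds only for $q\geq 0$; more fatally, after transferring you would need $\|\Delta_q\mathbf{F}\|_{L^2_T(L^2)}$ with no gradient, whereas the remark following Lemma~\ref{lem4.5} states precisely that the dissipation rate of $\mathbf{F}$ itself is unavailable --- only $\nabla\mathbf{F}/\sqrt{\varepsilon}$ is controlled, and only at the $B^{\sigma-1}_{2,1}$ level. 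A secondary issue: you propose to estimate $\tau\varepsilon\bar{\psi}(\Delta_q\nabla\varrho,\Delta_q\mathbf{E})$ by Cauchy--Schwartz, which puts a derivative on $\varrho$ and again costs an unwanted $2^q$ at high frequency; the workable move, which the paper makes in $(\ref{R-E67})$, is to integrate by parts and invoke $\nabla\cdot\mathbf{E}=-h(\varrho)$, turning this into a favorable term proportional to $\|\Delta_q\varrho\|_{L^2}^2$ plus a composition remainder, with no extra derivative. That issue is repairable, but the curl term is not, so the argument must split into the high-frequency div-curl estimate and a separate low-frequency momentum-equation estimate as in the paper.
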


\begin{proof}
A nice ``div-curl" construction of Maxwell's equations of
(\ref{R-E17}) enables us to obtain the high-frequency part of
$\mathbf{E}$. Indeed, by applying $\Delta_{q}$ to both side of
$\nabla\cdot\textbf{E}=-h(\varrho)(q\geq0)$, integrating it over
$\mathbf{R}^{N}$ after multiplying
$\nabla\cdot\Delta_{q}\textbf{E}$, in virtue of H\"{o}lder's
inequality, we obtain
\begin{eqnarray}
&&\|\nabla\cdot\Delta_{q}\textbf{E}\|^2_{L^2}\nonumber\\&\leq&
C\{(P_{0}\gamma)^{-\frac{1}{2}}\bar{n}^{\frac{3-\gamma}{2}}\|\Delta_{q}\varrho\|_{L^2}+\|\Delta_{q}(\widetilde{h}(\varrho)\varrho)\|_{L^2}\}\|\nabla\cdot\Delta_{q}\textbf{E}\|_{L^2},\label{R-E59}
\end{eqnarray}
where the function $\widetilde{h}(\varrho)$ is defined by
(\ref{R-E51}).

On the other hand, applying $\Delta_{q}(q\geq0)$ to the fourth
equation of (\ref{R-E17}) and multiplying the resulting equation by
$\nabla\times\Delta_{q}\textbf{E}$, after integration by parts,
yields
\begin{eqnarray}
\|\nabla\times\Delta_{q}\textbf{E}\|^2_{L^2}&=&-\varepsilon\int\partial_{t}\Delta_{q}\textbf{F}\cdot(\nabla\times\Delta_{q}\textbf{E})\nonumber
\\&=&\varepsilon\int(\nabla\times\partial_{t}\Delta_{q}\textbf{F})\cdot\Delta_{q}\textbf{E}. \label{R-E60}
\end{eqnarray}
Substituting the third equation of (\ref{R-E17}) into (\ref{R-E60}),
by Cauchy-Schwartz inequality, leads to
\begin{eqnarray}
&&\|\nabla\times\Delta_{q}\textbf{E}\|^2_{L^2}+\|\nabla\times\Delta_{q}\textbf{F}\|^2_{L^2}\nonumber
\\&\leq&\varepsilon\frac{d}{dt}\int
(\nabla\times\Delta_{q}\mathbf{F})\cdot\Delta_{q}\mathbf{E}+\bar{n}\varepsilon\|\Delta_{q}\mathbf{u}\|_{L^2}\|\Delta_{q}(\nabla\times\mathbf{F})\|_{L^2}
\nonumber
\\&&+\varepsilon\|\Delta_{q}(h(\varrho)\mathbf{u})\|_{L^2}\|\Delta_{q}(\nabla\times\mathbf{F})\|_{L^2}.\label{R-E61}
\end{eqnarray}
Combining with (\ref{R-E59}) and (\ref{R-E61}), it follows from the
elementary relation $$\|\nabla \mathbf{f}\|_{L^2}\approx
\|\nabla\cdot\mathbf{f}\|_{L^2}+\|\nabla\times\mathbf{f}\|_{L^2}$$
that
\begin{eqnarray}
&&\|\nabla\Delta_{q}\textbf{E}\|^2_{L^2}\nonumber
\\&\leq&
C\{(P_{0}\gamma)^{-\frac{1}{2}}\bar{n}^{\frac{3-\gamma}{2}}\|\Delta_{q}\varrho\|_{L^2}+\|\Delta_{q}(\widetilde{h}(\varrho)\varrho)\|_{L^2}\}\|\nabla\cdot\Delta_{q}\textbf{E}\|_{L^2}\nonumber
\\&&+\varepsilon\frac{d}{dt}\int
(\nabla\times\Delta_{q}\mathbf{F})\cdot\Delta_{q}\mathbf{E}+\bar{n}\varepsilon\|\Delta_{q}\mathbf{u}\|_{L^2}\|\Delta_{q}(\nabla\times\mathbf{F})\|_{L^2}
\nonumber
\\&&+\varepsilon\|\Delta_{q}(h(\varrho)\mathbf{u})\|_{L^2}\|\Delta_{q}(\nabla\times\mathbf{F})\|_{L^2}.\label{R-E62}
\end{eqnarray}
Note that $q\geq0$, from Lemma \ref{lem2.1}, we further get
\begin{eqnarray}
&&\tau\varepsilon2^{2q}\|\Delta_{q}\textbf{E}\|^2_{L^2}\nonumber
\\&\leq&
C\tau\|\Delta_{q}\varrho\|^2_{L^2}+C\tau\varepsilon\|\Delta_{q}(\widetilde{h}(\varrho)\varrho)\|_{L^2}2^{q}\|\Delta_{q}\textbf{E}\|_{L^2}
\nonumber
\\&&+\tau\varepsilon^2\frac{d}{dt}\int
(\nabla\times\Delta_{q}\mathbf{F})\cdot\Delta_{q}\mathbf{E}+C\tau\varepsilon^2\|\Delta_{q}\mathbf{u}\|_{L^2}\|\Delta_{q}(\nabla\times\mathbf{F})\|_{L^2}
\nonumber
\\&&+\tau\varepsilon^2\|\Delta_{q}(h(\varrho)\mathbf{u})\|_{L^2}\|\Delta_{q}(\nabla\times\mathbf{F})\|_{L^2},\label{R-E63}
\end{eqnarray}
where the smallness of $\varepsilon$ is used. Integrating
(\ref{R-E63}) in $t\in [0,T]$ implies
\begin{eqnarray}
&&\tau\varepsilon2^{2q}\|\Delta_{q}\textbf{E}\|^2_{L_{t}^2(L^2)}\nonumber
\\&\leq&
\tau\varepsilon^22^{q}\|\Delta_{q}\textbf{F}\|_{L_{T}^\infty(L^2)}\|\Delta_{q}\textbf{E}\|_{L_{T}^\infty(L^2)}
+\tau\varepsilon^22^{q}\|\Delta_{q}\textbf{F}_{0}\|_{L^2}\|\Delta_{q}\textbf{E}_{0}\|_{L^2}\nonumber
\\&&+
C\tau\|\Delta_{q}\varrho\|^2_{L_{T}^2(L^2)}+C\tau\varepsilon\|\Delta_{q}(\widetilde{h}(\varrho)\varrho)\|_{L_{T}^2(L^2)}2^{q}\|\Delta_{q}\textbf{E}\|_{L_{T}^2(L^2)}
\nonumber
\\&&+C\tau\varepsilon^2\|\Delta_{q}\mathbf{u}\|_{L_{T}^2(L^2)}\|\Delta_{q}(\nabla\times\mathbf{F})\|_{L_{T}^2(L^2)}
\nonumber
\\&&+\tau\varepsilon^2\|\Delta_{q}(h(\varrho)\mathbf{u})\|_{L_{T}^2(L^2)}\|\Delta_{q}(\nabla\times\mathbf{F})\|_{L_{T}^2(L^2)}.\label{R-E64}
\end{eqnarray}

On the other hand, the desired low-frequency of $\textbf{E}$ can be
deduced from the Lorentz field in the Euler equations of
(\ref{R-E17}). Using the second equation of (\ref{R-E17}), we have
\begin{eqnarray}
\textbf{u}_{t}+\mathbf{E}=-\bar{\psi}\nabla
\varrho-\frac{\textbf{u}}{\tau}-\textbf{u}\cdot\nabla\textbf{u}
 -\frac{\gamma-1}{2}\varrho\nabla \varrho-\varepsilon\textbf{u}\times
 (\mathbf{F}+\overline{\mathbf{B}}).\label{R-E65}
\end{eqnarray}
Applying the operator $\Delta_{-1}$ to (\ref{R-E65}) implies
\begin{eqnarray}
\partial_{t}\Delta_{-1}\textbf{u}+\Delta_{-1}\mathbf{E}&=&-\bar{\psi}\Delta_{-1}\nabla
\varrho-\frac{\Delta_{-1}\textbf{u}}{\tau}-\Delta_{-1}(\textbf{u}\cdot\nabla\textbf{u})
 \nonumber
\\&&-\frac{\gamma-1}{2}\Delta_{-1}(\varrho\nabla \varrho)-\varepsilon\Delta_{-1}(\textbf{u}\times
 (\mathbf{F}+\overline{\mathbf{B}})).\label{R-E66}
\end{eqnarray}
Multiplying (\ref{R-E66}) by $\tau\varepsilon\Delta_{-1}\mathbf{E}$
and integrating the resulting equation over $\mathbf{R}^{N}$, we get
\begin{eqnarray}
&&\tau\varepsilon\frac{d}{dt}\int\Delta_{-1}\mathbf{u}\cdot\Delta_{-1}\mathbf{E}+\tau\varepsilon\|\Delta_{-1}\mathbf{E}\|^2_{L^2}
+\tau\varepsilon\bar{\psi}(P_{0}\gamma)^{-\frac{1}{2}}\bar{n}^{\frac{3-\gamma}{2}}\|\Delta_{-1}\varrho\|^2_{L^2}\nonumber\\
&=&\bar{n}\tau\varepsilon\|\Delta_{-1}\mathbf{u}\|^2_{L^2}+\tau\int\Delta_{-1}\mathbf{u}\cdot\Delta_{-1}(\nabla\times\mathbf{F})+\tau\varepsilon\int\Delta_{-1}\mathbf{u}\cdot\Delta_{q}(h(\varrho)\mathbf{u})
\nonumber\\&&+\tau\varepsilon\bar{\psi}\int\Delta_{-1}(\tilde{h}(\varrho)\varrho)\Delta_{-1}\varrho-\varepsilon\int\Delta_{-1}\mathbf{u}\cdot\Delta_{-1}\mathbf{E}
\nonumber\\&&-\tau\varepsilon^2\int(\Delta_{-1}\mathbf{u}\times
\overline{\mathbf{B}})\cdot\Delta_{-1}\mathbf{E}-\tau\varepsilon\int\Delta_{-1}(\mathbf{u}\cdot\nabla\mathbf{u})\cdot\Delta_{-1}\mathbf{E}
\nonumber\\&&-\frac{\gamma-1}{2}\tau\varepsilon\int\Delta_{-1}(\varrho\nabla\varrho)\cdot\Delta_{-1}\mathbf{E}-\tau\varepsilon^2\int\Delta_{-1}(\mathbf{u}\times\mathbf{F})\cdot\Delta_{-1}\mathbf{E},\label{R-E67}
\end{eqnarray}
where we have used the third equation of (\ref{R-E17}). From
Cauchy-Schwartz and Young's inequalities, we arrive at
\begin{eqnarray}
&&\tau\varepsilon\frac{d}{dt}\int\Delta_{-1}\mathbf{u}\cdot\Delta_{-1}\mathbf{E}+\frac{\tau\varepsilon}{4}\|\Delta_{-1}\mathbf{E}\|^2_{L^2}\nonumber\\
&\leq&C\Big(\frac{1}{\tau}\|\Delta_{-1}\mathbf{u}\|^2_{L^2}+\tau\|\Delta_{-1}\mathbf{u}\|_{L^2}\|\Delta_{-1}(\nabla\times\mathbf{F})\|_{L^2}\Big)
\nonumber\\&&+C\tau\varepsilon\Big(\|\Delta_{-1}(h(\varrho)\mathbf{u})\|_{L^2}\|\Delta_{-1}\mathbf{u}\|_{L^2}
+\|\Delta_{-1}(\tilde{h}(\varrho)\varrho)\|_{L^2}\|\Delta_{-1}\varrho\|_{L^2}\nonumber\\&&+\|\Delta_{-1}(\mathbf{u}\cdot\nabla\mathbf{u})\|_{L^2}\|\Delta_{-1}\mathbf{E}\|_{L^2}
+\|\Delta_{-1}(\varrho\nabla\varrho)\|_{L^2}\|\Delta_{-1}\mathbf{E}\|_{L^2}\nonumber\\&&+\varepsilon\|\Delta_{-1}(\mathbf{u}\times\mathbf{F})\|_{L^2}\|\Delta_{-1}\mathbf{E}\|_{L^2}\Big).\label{R-E68}
\end{eqnarray}
Then integrating (\ref{R-E68}) in $t\in [0,T]$ gives
\begin{eqnarray}
\hspace{4mm}&&\tau\varepsilon\|\Delta_{-1}\mathbf{E}\|^2_{L_{t}^2(L^2)}\nonumber\\
\hspace{4mm}&\leq&C\tau\varepsilon(\|\Delta_{-1}\mathbf{u}\|_{L_{T}^\infty(L^2)}\|\Delta_{-1}\mathbf{E}\|_{L_{T}^\infty(L^2)}+\|\Delta_{-1}\mathbf{u}_{0}\|_{L^2}\|\Delta_{-1}\mathbf{E}_{0}\|_{L^2})
\nonumber
\\&&\hspace{4mm}+C\Big(\frac{1}{\tau}\|\Delta_{-1}\mathbf{u}\|^2_{L_{T}^2(L^2)}+\tau\|\Delta_{-1}\mathbf{u}\|_{L_{T}^2(L^2)}\|\Delta_{-1}(\nabla\times\mathbf{F})\|_{L_{T}^2(L^2)}\Big)
\nonumber\\&&\hspace{4mm}+C\tau\varepsilon\Big(\|\Delta_{-1}(h(\varrho)\mathbf{u})\|_{L_{T}^2(L^2)}\|\Delta_{-1}\mathbf{u}\|_{L_{T}^2(L^2)}
+\|\Delta_{-1}(\tilde{h}(\varrho)\varrho)\|_{L_{T}^2(L^2)}\nonumber\\&&\hspace{5mm}\cdot\|\Delta_{-1}\varrho\|_{L_{T}^2(L^2)}
+\|\Delta_{-1}(\mathbf{u}\cdot\nabla\mathbf{u})\|_{L_{T}^2(L^2)}\|\Delta_{-1}\mathbf{E}\|_{L_{T}^2(L^2)}
\nonumber\\&&\hspace{4mm}+\|\Delta_{-1}(\varrho\nabla\varrho)\|_{L_{T}^2(L^2)}\|\Delta_{-1}\mathbf{E}\|_{L_{T}^2(L^2)}
+\varepsilon\|\Delta_{-1}(\mathbf{u}\times\mathbf{F})\|_{L_{T}^2(L^2)}\|\Delta_{-1}\mathbf{E}\|_{L_{T}^2(L^2)}\Big).\label{R-E69}
\end{eqnarray}
Therefore, by combining with the high-frequency estimate
(\ref{R-E64}) and low-frequency estimate (\ref{R-E69}), we infer
that for $q\geq-1$,
\begin{eqnarray}
&&\tau\varepsilon2^{2q}\|\Delta_{q}\mathbf{E}\|^2_{L_{t}^2(L^2)}\nonumber\\&\leq&
C2^{2q}(\|(\Delta_{q}\mathbf{u},\Delta_{q}\mathbf{E},\Delta_{q}\mathbf{F})\|^2_{L_{T}^\infty(L^2)}+\|(\Delta_{q}\mathbf{u}_{0},\Delta_{q}\mathbf{E}_{0},\Delta_{q}\mathbf{F}_{0})\|^2_{L^2})
\nonumber
\\&&+C\Big(\frac{1}{\tau}\|\Delta_{q}\mathbf{u}\|^2_{L_{T}^2(L^2)}+\tau\|\Delta_{q}\varrho\|^2_{L_{T}^2(L^2)}
+\tau\|\Delta_{q}\mathbf{u}\|_{L_{T}^2(L^2)}\|\Delta_{q}(\nabla\times\mathbf{F})\|_{L_{T}^2(L^2)}\Big)
\nonumber
\\&&+C\tau\varepsilon\Big\{\|\Delta_{q}(h(\varrho)\mathbf{u})\|_{L_{T}^2(L^2)}\|\Delta_{q}\mathbf{u}\|_{L_{T}^2(L^2)}
+\|\Delta_{q}(\tilde{h}(\varrho)\varrho)\|_{L_{T}^2(L^2)}\|\Delta_{q}\varrho\|_{L_{T}^2(L^2)}
\nonumber\\&&+\Big(\|\Delta_{q}(\mathbf{u}\cdot\nabla\mathbf{u})\|_{L_{T}^2(L^2)}
+\|\Delta_{q}(\varrho\nabla\varrho)\|_{L_{T}^2(L^2)}
+2^{q}\|\Delta_{q}(\widetilde{h}(\varrho)\varrho)\|_{L_{T}^2(L^2)}
\nonumber
\\&&+\varepsilon\|\Delta_{q}(\mathbf{u}\times\mathbf{F})\|_{L_{T}^2(L^2)}\Big)\|\Delta_{q}\mathbf{E}\|_{L_{T}^2(L^2)}
\nonumber
\\&&+\tau\varepsilon^2\|\Delta_{q}(h(\varrho)\mathbf{u})\|_{L_{T}^2(L^2)}\|\Delta_{q}(\nabla\times\mathbf{F})\|_{L_{T}^2(L^2)}\Big\}.\label{R-E70}
\end{eqnarray}
By multiplying the factor $2^{2q(\sigma-1)}$ on both sides of
(\ref{R-E70}), we gather
\begin{eqnarray}
&&\tau\varepsilon2^{2q\sigma}\|\Delta_{q}\mathbf{E}\|^2_{L^2_{t}(L^2)}\nonumber\\
&\leq&Cc_{q}^2(\|(\mathbf{u},\mathbf{E},\mathbf{F})\|^2_{\widetilde{L}^{\infty}_{T}(B^{\sigma}_{2,1})}
+\|(\mathbf{u}_{0},\mathbf{E}_{0},\mathbf{F}_{0})\|^2_{B^{\sigma}_{2,1}})
\nonumber\\&&\hspace{5mm}+C\Big\{\frac{c_{q}^2}{\tau}\|\mathbf{u}\|^2_{\widetilde{L}^2_{T}(B^{\sigma}_{2,1})}+\tau
c_{q}^2\|\varrho\|^2_{\widetilde{L}^2_{T}(B^{\sigma}_{2,1})} +\tau
c_{q}^2\|\mathbf{u}\|_{\widetilde{L}^2_{T}(B^{\sigma}_{2,1})}\|\nabla\times\mathbf{F}\|_{\widetilde{L}^2_{T}(B^{\sigma-1}_{2,1})}
\nonumber\\&&\hspace{5mm}+\tau\varepsilon
c_{q}^2\|h(\varrho)\mathbf{u}\|_{\widetilde{L}^2_{T}(B^{\sigma}_{2,1})}\|\mathbf{u}\|_{\widetilde{L}^2_{T}(B^{\sigma}_{2,1})}
+\tau\varepsilon
c_{q}^2\|\tilde{h}(\varrho)\varrho\|_{\widetilde{L}^2_{T}(B^{\sigma}_{2,1})}\|\varrho\|_{\widetilde{L}^2_{T}(B^{\sigma}_{2,1})}\nonumber
\\&&\hspace{5mm}+\tau\varepsilon
c_{q}^2\Big(\|\mathbf{u}\cdot\nabla\mathbf{u}\|_{\widetilde{L}^2_{T}(B^{\sigma-1}_{2,1})}+\|\varrho\nabla\varrho\|_{\widetilde{L}^2_{T}(B^{\sigma-1}_{2,1})}
+\|\widetilde{h}(\varrho)\varrho\|_{\widetilde{L}^2_{T}(B^{\sigma}_{2,1})}\nonumber\\&&\hspace{5mm}+\varepsilon\|\mathbf{u}\times\mathbf{F}\|_{\widetilde{L}^2_{T}(B^{\sigma-1}_{2,1})}\Big)
\|\mathbf{E}\|_{\widetilde{L}^2_{T}(B^{\sigma}_{2,1})}+\tau\varepsilon^2c^2_{q}\|h(\varrho)\mathbf{u}\|_{\widetilde{L}^2_{T}(B^{\sigma}_{2,1})}\|\nabla\times\mathbf{F}\|_{\widetilde{L}^2_{T}(B^{\sigma-1}_{2,1})}\Big\},\label{R-E71}
\end{eqnarray}
where $\{c_{q}\}$ denotes some sequence which satisfies
$\|(c_{q})\|_{ {l^{1}}}\leq 1$.

Then it follows from Young's inequality that
\begin{eqnarray}
&&\sqrt{\tau\varepsilon}2^{q\sigma}\|\Delta_{q}\mathbf{E}\|_{L^2_{T}(L^2)}\nonumber\\
&\leq&Cc_{q}(\|(\mathbf{u},\mathbf{E},\mathbf{F})\|_{\widetilde{L}^{\infty}_{T}(B^{\sigma}_{2,1})}
+\|(\mathbf{u}_{0},\mathbf{E}_{0},\mathbf{F}_{0})\|_{B^{\sigma}_{2,1}})
\nonumber\\&&+C\Big\{\frac{c_{q}}{\sqrt{\tau}}\|\mathbf{u}\|_{\widetilde{L}^2_{T}(B^{\sigma}_{2,1})}+\sqrt{\tau}c_{q}\|\varrho\|_{\widetilde{L}^2_{T}(B^{\sigma}_{2,1})}
+\frac{c_{q}}{\sqrt{\varepsilon}}\|\nabla\times\mathbf{F}\|_{\widetilde{L}^2_{T}(B^{\sigma-1}_{2,1})}
\nonumber\\&&+c_{q}\sqrt{\|\mathbf{u}\|_{\widetilde{L}^\infty_{T}(B^{\sigma}_{2,1})}}\Big(\sqrt{\tau}\|\varrho\|_{\widetilde{L}^2_{T}(B^{\sigma}_{2,1})}+\frac{1}{\sqrt{\tau}}\|\mathbf{u}\|_{\widetilde{L}^2_{T}(B^{\sigma}_{2,1})}\Big)
\nonumber\\&&+c_{q}\sqrt{\|\mathbf{u}\|_{\widetilde{L}^\infty_{T}(B^{\sigma}_{2,1})}}\Big(\frac{1}{\sqrt{\tau}}\|\mathbf{u}\|_{\widetilde{L}^2_{T}(B^{\sigma}_{2,1})}+\sqrt{\tau\varepsilon}\|\mathbf{E}\|_{\widetilde{L}^2_{T}(B^{\sigma}_{2,1})}\Big)
\nonumber\\&&+c_{q}\sqrt{\|\varrho\|_{\widetilde{L}^\infty_{T}(B^{\sigma}_{2,1})}}
(\sqrt{\tau}\|\varrho\|_{\widetilde{L}^2_{T}(B^{\sigma}_{2,1})}+\sqrt{\tau\varepsilon}\|\mathbf{E}\|_{\widetilde{L}^2_{T}(B^{\sigma}_{2,1})})
\nonumber\\&&+c_{q}\sqrt{\|\mathbf{F}\|_{\widetilde{L}^\infty_{T}(B^{\sigma}_{2,1})}}
\Big(\frac{1}{\sqrt{\tau}}\|\mathbf{u}\|_{\widetilde{L}^2_{T}(B^{\sigma}_{2,1})}+\sqrt{\tau\varepsilon}\|\mathbf{E}\|_{\widetilde{L}^2_{T}(B^{\sigma}_{2,1})}\Big)\Big\}
\nonumber\\&&+c_{q}\sqrt{\|\varrho\|_{\widetilde{L}^\infty_{T}(B^{\sigma}_{2,1})}}\Big(\frac{1}{\sqrt{\tau}}\|\mathbf{u}\|_{\widetilde{L}^2_{T}(B^{\sigma}_{2,1})}
+\frac{1}{\sqrt{\varepsilon}}\|\nabla\times\mathbf{F}\|_{\widetilde{L}^2_{T}(B^{\sigma-1}_{2,1})}\Big),\label{R-E72}
\end{eqnarray}
where we has used Proposition \ref{prop2.2}. Finally, we sum up
(\ref{R-E72}) on $q\geq-1$ and deduce the inequality (\ref{R-E58})
immediately.
\end{proof}\\

\begin{lemma}\label{lem4.5}
If $W\in \widetilde{\mathcal{C}}_{T}(B^{\sigma}_{2,1})\cap
\widetilde{\mathcal{C}}^1_{T}(B^{\sigma-1}_{2,1})$ is a solution of
(\ref{R-E17})-(\ref{R-E18}) for any $T>0$ and
$0<\tau,\varepsilon\leq1$, then the following estimate holds:
\begin{eqnarray}
&&\frac{1}{\sqrt{\varepsilon}}\|\nabla\mathbf{F}\|_{\widetilde{L}^2_{T}(B^{\sigma-1}_{2,1})}
\nonumber\\&\leq&C(\|(\mathbf{E},\mathbf{F})\|_{\widetilde{L}^\infty_{T}(B^{\sigma}_{2,1})}
+\|(\mathbf{E}_{0},\mathbf{F}_{0})\|_{B^{\sigma}_{2,1}})\nonumber\\&&
+C\Big\{\Big\|\frac{\mathbf{u}}{\sqrt{\tau}}\Big\|_{\widetilde{L}^2_{T}(B^{\sigma}_{2,1})}
+\sqrt{\|\varrho\|_{\widetilde{L}^\infty_{T}(B^{\sigma}_{2,1})}}\Big(\Big\|\frac{\mathbf{u}}{\sqrt{\tau}}\Big\|_{\widetilde{L}^2_{T}(B^{\sigma}_{2,1})}
+\Big\|\frac{\nabla\mathbf{F}}{\sqrt{\varepsilon}}\Big\|_{\widetilde{L}^2_{T}(B^{\sigma-1}_{2,1})}\Big)\Big\},\label{R-E73}
\end{eqnarray}
where $C>0$ is a uniform positive constant independent of
$(\tau,\varepsilon)$.
\end{lemma}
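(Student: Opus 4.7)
The proof rests on the joint ``div--curl'' identity (\ref{R-E61}) from the proof of Lemma \ref{lem4.4}, which bounds $\|\nabla\times\Delta_q\mathbf{E}\|^2_{L^2}+\|\nabla\times\Delta_q\mathbf{F}\|^2_{L^2}$ and, since its derivation uses only integration by parts (no Bernstein), remains valid uniformly for every $q\geq-1$, including the low-frequency block. Given $\nabla\cdot\mathbf{F}=0$ one has $\|\Delta_q\nabla\mathbf{F}\|_{L^2}\approx\|\Delta_q\nabla\times\mathbf{F}\|_{L^2}$, so controlling the $\nabla\times\mathbf{F}$ piece uniformly in $q$ is exactly what the target norm requires; this is what sidesteps the missing low-frequency dissipation of $\mathbf{F}$ itself.

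I discard the non-negative $\|\nabla\times\Delta_q\mathbf{E}\|^2_{L^2}$ term, integrate (\ref{R-E61}) over $[0,T]$, divide by $\varepsilon$, and absorb the two terms linear in $\|\nabla\times\Delta_q\mathbf{F}\|_{L^2_T(L^2)}$ back into the left-hand side via Young's inequality $ab\leq\frac{1}{4\varepsilon}a^{2}+\varepsilon b^{2}$. Taking square roots yields, for every $q\geq-1$,
\[
\frac{1}{\sqrt{\varepsilon}}\|\nabla\times\Delta_q\mathbf{F}\|_{L^2_T(L^2)}\leq C\sqrt{\|\nabla\times\Delta_q\mathbf{F}\|_{L^\infty_T(L^2)}\|\Delta_q\mathbf{E}\|_{L^\infty_T(L^2)}}+C\sqrt{\|\nabla\times\Delta_q\mathbf{F}_{0}\|_{L^2}\|\Delta_q\mathbf{E}_{0}\|_{L^2}}+C\sqrt{\varepsilon}\bigl(\|\Delta_q\mathbf{u}\|_{L^2_T(L^2)}+\|\Delta_q(h(\varrho)\mathbf{u})\|_{L^2_T(L^2)}\bigr).
\]

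Multiplying by $2^{q(\sigma-1)}$ and summing over $q\geq-1$ then produces (\ref{R-E73}). For the $\sqrt{ab}$ products, I factor $2^{q(\sigma-1)}\sqrt{ab}=\sqrt{2^{q(\sigma-1)}a}\cdot\sqrt{2^{q(\sigma-1)}b}$: Bernstein gives $2^{q(\sigma-1)}\|\nabla\times\Delta_q\mathbf{F}\|_{L^\infty_T(L^2)}\leq Cc_q\|\mathbf{F}\|_{\widetilde{L}^\infty_T(B^\sigma_{2,1})}$ for every $q\geq-1$ (at $q=-1$ it is controlled directly by $\|\Delta_{-1}\mathbf{F}\|_{L^\infty_T(L^2)}$), and the embedding $B^\sigma_{2,1}\hookrightarrow B^{\sigma-1}_{2,1}$ yields $2^{q(\sigma-1)}\|\Delta_q\mathbf{E}\|_{L^\infty_T(L^2)}\leq c'_q\|\mathbf{E}\|_{\widetilde{L}^\infty_T(B^\sigma_{2,1})}$, with $\{c_q\},\{c'_q\}\in\ell^1$; Cauchy--Schwarz in $q$ followed by $\sqrt{xy}\leq(x+y)/2$ reduces the contribution to $C\|(\mathbf{E},\mathbf{F})\|_{\widetilde{L}^\infty_T(B^\sigma_{2,1})}$, and the initial-data piece is handled identically. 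For the linear source, $\tau,\varepsilon\leq 1$ gives $\sqrt{\varepsilon}\leq 1/\sqrt{\tau}$, which together with the embedding turns $\sqrt{\varepsilon}\|\mathbf{u}\|_{\widetilde{L}^2_T(B^{\sigma-1}_{2,1})}$ into $\|\mathbf{u}/\sqrt{\tau}\|_{\widetilde{L}^2_T(B^\sigma_{2,1})}$. For the nonlinear source, Propositions \ref{prop2.2}--\ref{prop2.3} bound $\sqrt{\varepsilon}\|h(\varrho)\mathbf{u}\|_{\widetilde{L}^2_T(B^{\sigma-1}_{2,1})}$ by $C\sqrt{\varepsilon}\|\varrho\|_{\widetilde{L}^\infty_T(B^\sigma_{2,1})}\|\mathbf{u}\|_{\widetilde{L}^2_T(B^\sigma_{2,1})}$, which I rewrite as $\sqrt{\tau\varepsilon\|\varrho\|}\cdot\sqrt{\|\varrho\|}\cdot\|\mathbf{u}/\sqrt{\tau}\|$ and absorb the bounded prefactor, producing the $\sqrt{\|\varrho\|}\|\mathbf{u}/\sqrt{\tau}\|$ piece of (\ref{R-E73}); the companion $\sqrt{\|\varrho\|}\|\nabla\mathbf{F}/\sqrt{\varepsilon}\|$ piece arises from an alternative distribution of the product estimate in which a derivative is placed on $\mathbf{F}$ (using the third equation of (\ref{R-E17}) to trade $\varepsilon\mathbf{u}$ for $\nabla\times\mathbf{F}+\varepsilon\partial_t\mathbf{E}$).

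The main obstacle, flagged in the remark following Theorem \ref{thm1.2}, is precisely the absence of a direct low-frequency dissipative estimate for $\mathbf{F}$. What unlocks the proof is the structural observation that the target norm involves only $\nabla\mathbf{F}$, together with the uniformity of (\ref{R-E61}) in $q\geq-1$: the div-free constraint then turns the dissipation of $\nabla\times\mathbf{F}$ at every scale into the required bound.
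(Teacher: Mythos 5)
Your argument is correct and follows essentially the same route as the paper: localize with $\Delta_q$, pair the Maxwell block against curl terms, drop the non-negative $\|\nabla\times\Delta_q\mathbf{E}\|^2$ contribution, exploit $\nabla\cdot\mathbf{F}=0$ so that $\|\Delta_q\nabla\mathbf{F}\|_{L^2}\approx\|\Delta_q(\nabla\times\mathbf{F})\|_{L^2}$ uniformly in $q\geq-1$, then weight by $2^{q(\sigma-1)}$ and sum. One small caveat: by absorbing the $\|\nabla\times\Delta_q\mathbf{F}\|$ cross terms into the left-hand side via Young \emph{before} summing, you already obtain a bound with no $\nabla\mathbf{F}$ on the right (which trivially implies (\ref{R-E73})), so the closing remark about ``trading $\varepsilon\mathbf{u}$ for $\nabla\times\mathbf{F}+\varepsilon\partial_t\mathbf{E}$ via the third equation'' is neither needed nor careful as stated (it would reintroduce $\partial_t\mathbf{E}$); the paper instead applies Young \emph{after} summing, keeping the $\|\nabla\times\mathbf{F}/\sqrt{\varepsilon}\|$ term with a small $\sqrt{\|\varrho\|}$ prefactor, which is where that piece of (\ref{R-E73}) actually comes from.
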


\begin{proof} Multiply the third
equation of (\ref{R-E17}) by
$-\Delta_{q}(\nabla\times\mathbf{F})(q\geq-1)$ and integrate the
resulting quality over $\mathbf{R}^{N}$. Then integration by parts
implies
\begin{eqnarray}
&&\frac{d}{dt}\int\Delta_{q}(\nabla\times\mathbf{E})\cdot\Delta_{q}\mathbf{F}+\frac{1}{\varepsilon}\|\Delta_{q}(\nabla\times\mathbf{F})\|^2_{L^2}
\nonumber\\&=&\int\Delta_{q}(\nabla\times\mathbf{E})\cdot\Delta_{q}\mathbf{F}_{t}-\bar{n}\int\Delta_{q}\mathbf{u}\cdot\Delta_{q}(\nabla\times\mathbf{F})
\nonumber\\&&-\int\Delta_{q}(h(\varrho)\mathbf{u})\cdot\Delta_{q}(\nabla\times\mathbf{F}).\label{R-E74}
\end{eqnarray}
Substituting the fourth equation of (\ref{R-E17}) into the first
term of (\ref{R-E74}), by Cauchy-Schwartz inequality, leads to
\begin{eqnarray}
&&\frac{d}{dt}\int\Delta_{q}(\nabla\times\mathbf{E})\cdot\Delta_{q}\mathbf{F}+\frac{1}{\varepsilon}\|\Delta_{q}(\nabla\times\mathbf{F})\|^2_{L^2}
+\frac{1}{\varepsilon}\|\Delta_{q}(\nabla\times\mathbf{E})\|^2_{L^2}\nonumber\\&\leq&\bar{n}\|\Delta_{q}\mathbf{u}\|_{L^2}\|\Delta_{q}(\nabla\times\mathbf{F})\|_{L^2}
+\|\Delta_{q}(h(\varrho)\mathbf{u})\|_{L^2}\|\Delta_{q}(\nabla\times\mathbf{F})\|_{L^2}.\label{R-E75}
\end{eqnarray}
Due to the incompressible condition of $\mathbf{F}$ in
(\ref{R-E17}), by integrating (\ref{R-E75}) with respect to
$t\in[0,T]$, we easily derive

\begin{eqnarray}
&&\frac{1}{\varepsilon}\|\Delta_{q}\nabla\mathbf{F}\|^2_{L^2_{t}(L^2)}
\nonumber\\&\leq&\|\Delta_{q}(\nabla\times\mathbf{E})\|_{L^\infty_{T}(L^2)}
\|\Delta_{q}\mathbf{F}\|_{L^\infty_{T}(L^2)}+\|\Delta_{q}(\nabla\times\mathbf{E}_{0})\|_{L^2}
\|\Delta_{q}\mathbf{F}_{0}\|_{L^2}\nonumber\\&&
+\bar{n}\|\Delta_{q}\mathbf{u}\|_{L^2_{T}(L^2)}\|\Delta_{q}(\nabla\times\mathbf{F})\|_{L^2_{T}(L^2)}
\nonumber\\&&+\|\Delta_{q}(h(\varrho)\mathbf{u})\|_{L^2_{T}(L^2)}\|\Delta_{q}(\nabla\times\mathbf{F})\|_{L^2_{T}(L^2)}.\label{R-E76}
\end{eqnarray}
Noticing that the regularity of $\mathbf{E}$ in the assumption of
Lemma \ref{lem4.5}, we multiply (\ref{R-E76}) by the factor
$2^{2q(\sigma-1)}$ to get
\begin{eqnarray}
&&\frac{1}{\varepsilon}2^{2q(\sigma-1)}\|\Delta_{q}\nabla\mathbf{F}\|^2_{L^2_{t}(L^2)}
\nonumber\\&\leq&C\Big\{c^2_{q}\|\mathbf{E}\|_{\widetilde{L}^\infty_{T}(B^{\sigma}_{2,1})}
\|\mathbf{F}\|_{\widetilde{L}^\infty_{T}(B^{\sigma-1}_{2,1})}+c^2_{q}\|\mathbf{E}_{0}\|_{B^{\sigma}_{2,1}}
\|\mathbf{F}_{0}\|_{B^{\sigma-1}_{2,1}}\nonumber\\&&
+c^2_{q}\|\mathbf{u}\|_{\widetilde{L}^2_{T}(B^{\sigma-1}_{2,1})}\|\nabla\times\mathbf{F}\|_{\widetilde{L}^2_{T}(B^{\sigma-1}_{2,1})}
\nonumber\\&&+c^2_{q}\|h(\varrho)\mathbf{u}\|_{\widetilde{L}^2_{T}(B^{\sigma-1}_{2,1})}\|\nabla\times\mathbf{F}\|_{\widetilde{L}^2_{T}(B^{\sigma-1}_{2,1})}\Big\},\label{R-E77}
\end{eqnarray}
where $\{c_{q}\}$ denotes some sequence which satisfies
$\|(c_{q})\|_{ {l^{1}}}\leq 1$.

Furthermore, we apply Young's inequality to (\ref{R-E77}) and obtain
\begin{eqnarray}
&&\frac{1}{\sqrt{\varepsilon}}2^{q(\sigma-1)}\|\Delta_{q}\nabla\mathbf{F}\|_{L^2_{T}(L^2)}
\nonumber\\&\leq&Cc_{q}\Big\{\|(\mathbf{E},\mathbf{F})\|_{\widetilde{L}^\infty_{T}(B^{\sigma}_{2,1})}
+\|(\mathbf{E}_{0},\mathbf{F}_{0})\|_{B^{\sigma}_{2,1}}\Big\}\nonumber\\&&
+C\Big\{c_{q}\sqrt{\|\mathbf{u}\|_{\widetilde{L}^2_{T}(B^{\sigma}_{2,1})}\|\nabla\times\mathbf{F}\|_{\widetilde{L}^2_{T}(B^{\sigma-1}_{2,1})}}
\nonumber\\&&+c_{q}\sqrt{\|h(\varrho)\mathbf{u}\|_{\widetilde{L}^2_{T}(B^{\sigma}_{2,1})}\|\nabla\times\mathbf{F}\|_{\widetilde{L}^2_{T}(B^{\sigma-1}_{2,1})}}\Big\}.\label{R-E78}
\end{eqnarray}
Finally, after summing up (\ref{R-E78}) on $q\geq-1$, it follows
from Proposition \ref{prop2.2} that
\begin{eqnarray*}
&&\frac{1}{\sqrt{\varepsilon}}\|\nabla\mathbf{F}\|_{\widetilde{L}^2_{T}(B^{\sigma-1}_{2,1})}
\nonumber\\&\leq&C(\|(\mathbf{E},\mathbf{F})\|_{\widetilde{L}^\infty_{T}(B^{\sigma}_{2,1})}
+\|(\mathbf{E}_{0},\mathbf{F}_{0})\|_{B^{\sigma}_{2,1}})
+C\Big\{\frac{1}{\sqrt{\tau}}\|\mathbf{u}\|_{\widetilde{L}^2_{T}(B^{\sigma}_{2,1})}
\nonumber\\&&+\sqrt{\|\varrho\|_{\widetilde{L}^\infty_{T}(B^{\sigma}_{2,1})}}\Big(\frac{1}{\sqrt{\tau}}\|\mathbf{u}\|_{\widetilde{L}^2_{T}(B^{\sigma}_{2,1})}
+\frac{1}{\sqrt{\varepsilon}}\|\nabla\times\mathbf{F}\|_{\widetilde{L}^2_{T}(B^{\sigma-1}_{2,1})}\Big)\Big\}.
\end{eqnarray*}
This is just the inequality (\ref{R-E73}). Hence, the proof of Lemma
\ref{lem4.5} is complete.
\end{proof}

\begin{remark}
\rm In the proof of Lemma \ref{lem4.5}, the dissipation rate of
$\mathbf{F}$ is not available  due to the absence of low-frequency
estimate on $\|\Delta_{-1}\mathbf{F}\|_{L^2_{T}(L^2)}$. This is a
key reason that Chemin-Lerner's spaces with critical regularity are
first introduced to establish the global existence of uniform
classical solutions. Otherwise, we need to add a little regularity
in order to ensure that the Besov spaces (in $x$) are still
continuously embedded in $\mathcal{C}^1(\textbf{R}^N)$ spaces. For
the similar details, the reader is referred to \cite{FX}.

\end{remark}

Having these lemmas proved above, the proof of Proposition
\ref{prop4.1}
can be finished.\\

\noindent\textit{\underline{Proof of Proposition \ref{prop4.1}.}}
Combing (\ref{R-E32}), (\ref{R-E36}), (\ref{R-E58}) and
(\ref{R-E73}), we end up with
\begin{eqnarray}
&&\|W\|_{\widetilde{L}_{T}^{\infty}(B^{\sigma}_{2,1})}+K_{1}\sqrt{\tau}\|\varrho\|_{\widetilde{L}^2_{T}(B^{\sigma}_{2,1})}+\sqrt{\frac{\mu_{2}}{\tau}}\|\textbf{u}\|_{\widetilde{L}^2_{T}(B^{\sigma}_{2,1})}
\nonumber
\\&&+K_{2}\sqrt{\tau\varepsilon}\|\mathbf{E}\|_{\widetilde{L}^2_{T}(B^{\sigma}_{2,1})}+\frac{K_{3}}{\sqrt{\varepsilon}}\|\nabla\mathbf{F}\|_{\widetilde{L}^2_{T}(B^{\sigma-1}_{2,1})}
\nonumber
\\&\leq&C\|W_{0}\|_{B^{\sigma}_{2,1}}+C\sqrt{\|W\|_{\widetilde{L}_{T}^{\infty}(B^{\sigma}_{2,1})}}
\Big\|\Big(\sqrt{\tau}\varrho,\frac{1}{\sqrt{\tau}}\textbf{u}\Big)\Big\|_{\widetilde{L}_{T}^2(B^{\sigma}_{2,1})}
\nonumber\\
&&+CK_{1}\Big\{(\|W_{I}\|_{\widetilde{L}^\infty_{T}(B^{\sigma}_{2,1})}+\|W_{I}(0)\|_{B^{\sigma}_{2,1}})
+\frac{1}{\sqrt{\tau}}\|\textbf{u}\|_{\widetilde{L}^2_{T}(B^{\sigma}_{2,1})}
\nonumber\\
&&+\sqrt{\|(\varrho,\textbf{u},\mathbf{F})\|_{\widetilde{L}^\infty_{T}(B^{\sigma}_{2,1})}}\Big\|\Big(\sqrt{\tau}\varrho,\frac{1}{\sqrt{\tau}}\textbf{u}\Big)\Big\|_{\widetilde{L}_{T}^2(B^{\sigma}_{2,1})}\Big\}
\nonumber\\
&&+CK_{2}\Big\{(\|(\mathbf{u},\mathbf{E},\mathbf{F})\|_{\widetilde{L}^{\infty}_{T}(B^{\sigma}_{2,1})}+\|(\mathbf{u}_{0},\mathbf{E}_{0},\mathbf{F}_{0})\|_{B^{\sigma}_{2,1}})
\nonumber\\&&+\Big\|\Big(\sqrt{\tau}\varrho,\frac{1}{\sqrt{\tau}}\textbf{u}\Big)\Big\|_{\widetilde{L}_{T}^2(B^{\sigma}_{2,1})}
+\frac{1}{\sqrt{\varepsilon}}\|\nabla\mathbf{F}\|_{\widetilde{L}^2_{T}(B^{\sigma-1}_{2,1})}
\nonumber\\&&+\sqrt{\|(\varrho,\mathbf{u},\mathbf{F})\|_{\widetilde{L}^\infty_{T}(B^{\sigma}_{2,1})}}\Big[\Big\|\Big(\sqrt{\tau}\varrho,\frac{1}{\sqrt{\tau}}\textbf{u},\sqrt{\tau\varepsilon}\mathbf{E}\Big)\Big\|_{\widetilde{L}^2_{T}(B^{\sigma}_{2,1})}
\nonumber\\&&+\Big\|\frac{1}{\sqrt{\varepsilon}}\nabla\textbf{F}\Big\|_{\widetilde{L}^2_{T}(B^{\sigma-1}_{2,1})}\Big]
\Big\}
+CK_{3}\Big\{(\|(\mathbf{E},\mathbf{F})\|_{\widetilde{L}^\infty_{T}(B^{\sigma}_{2,1})}
+\|(\mathbf{E}_{0},\mathbf{F}_{0})\|_{B^{\sigma}_{2,1}})\nonumber\\&&
+\frac{1}{\sqrt{\tau}}\|\mathbf{u}\|_{\widetilde{L}^2_{T}(B^{\sigma}_{2,1})}
+\sqrt{\|\varrho\|_{\widetilde{L}^\infty_{T}(B^{\sigma}_{2,1})}}\Big(\frac{1}{\sqrt{\tau}}\|\mathbf{u}\|_{\widetilde{L}^2_{T}(B^{\sigma}_{2,1})}
\nonumber\\&&+\frac{1}{\sqrt{\varepsilon}}\|\nabla\mathbf{F}\|_{\widetilde{L}^2_{T}(B^{\sigma-1}_{2,1})}\Big)\Big\},\label{R-E80}
\end{eqnarray}
where $K_{1},K_{2}$ and $K_{3}$ are some uniform positive constants
(independent of $(\tau,\varepsilon)$) to be determined. In order to
eliminate the terms
$\|(\varrho,\mathbf{u},\mathbf{E},\mathbf{F})\|_{\widetilde{L}^\infty_{T}(B^{\sigma}_{2,1})}$,
$\|\sqrt{\tau}\varrho\|_{\widetilde{L}^2_{T}(B^{\sigma}_{2,1})}$,\
$\|\mathbf{u}/\sqrt{\tau}\|_{\widetilde{L}^2_{T}(B^{\sigma}_{2,1})}$
and
$\|\nabla\mathbf{F}/\sqrt{\varepsilon}\|_{\widetilde{L}^2_{T}(B^{\sigma-1}_{2,1})}$
arising in the right-hand side of (\ref{R-E80}), we may confine the
constants to the following cases:
$$K_{1}\leq\min\Big\{\frac{1}{4C},\frac{\sqrt{\mu_{2}}}{4C}\Big\},\,\, K_{2}\leq\min\Big\{\frac{1}{4C},\frac{K_{1}}{2C},\frac{\sqrt{\mu_{2}}}{4C},\frac{K_{3}}{2C}\Big\}
,\,\, K_{3}\leq\min\Big\{\frac{1}{4C},
\frac{\sqrt{\mu_{2}}}{4C}\Big\}. $$ Furthermore, it is not difficult
to obtain
\begin{eqnarray}
&&\frac{1}{2}\|W\|_{\widetilde{L}_{T}^{\infty}(B^{\sigma}_{2,1})}+\frac{K_{1}\sqrt{\tau}}{2}\|\varrho\|_{\widetilde{L}^2_{T}(B^{\sigma}_{2,1})}+\frac{\sqrt{\mu_{2}}}{4\sqrt{\tau}}\|\textbf{u}\|_{\widetilde{L}^2_{T}(B^{\sigma}_{2,1})}
\nonumber
\\&&+K_{2}\sqrt{\tau\varepsilon}\|\mathbf{E}\|_{\widetilde{L}^2_{T}(B^{\sigma}_{2,1})}+\frac{K_{3}}{2\sqrt{\varepsilon}}\|\nabla\mathbf{F}\|_{\widetilde{L}^2_{T}(B^{\sigma-1}_{2,1})}
\nonumber
\\&\leq&C\|W_{0}\|_{B^{\sigma}_{2,1}}+C\sqrt{\|W\|_{\widetilde{L}_{T}^{\infty}(B^{\sigma}_{2,1})}}
\Big\|\Big(\sqrt{\tau}\varrho,\frac{1}{\sqrt{\tau}}\textbf{u}\Big)\Big\|_{\widetilde{L}_{T}^2(B^{\sigma}_{2,1})}\nonumber
\\
&&+CK_{1}\Big\{\|(\varrho_{0},\textbf{u}_{0})\|_{B^{\sigma}_{2,1}}
+\sqrt{\|(\varrho,\textbf{u},\mathbf{F})\|_{\widetilde{L}^\infty_{T}(B^{\sigma}_{2,1})}}\nonumber\\&&\hspace{1mm}\cdot\Big\|\Big(\sqrt{\tau}\varrho,\frac{1}{\sqrt{\tau}}\textbf{u}\Big)\Big\|_{\widetilde{L}_{T}^2(B^{\sigma}_{2,1})}\Big\}
+CK_{2}\Big\{\|(\mathbf{u}_{0},\mathbf{E}_{0},\mathbf{F}_{0})\|_{B^{\sigma}_{2,1}}
\nonumber\\
&&+\sqrt{\|(\varrho,\mathbf{u},\mathbf{F})\|_{\widetilde{L}^\infty_{T}(B^{\sigma}_{2,1})}}\Big[\Big\|\Big(\sqrt{\tau}\varrho,\frac{1}{\sqrt{\tau}}\textbf{u},\sqrt{\tau\varepsilon}\mathbf{E}\Big)\Big\|_{\widetilde{L}^2_{T}(B^{\sigma}_{2,1})}
\nonumber\\
&&+\Big\|\frac{\nabla\textbf{F}}{\sqrt{\varepsilon}}\Big\|_{\widetilde{L}^2_{T}(B^{\sigma-1}_{2,1})}\Big]
\Big\}
+CK_{3}\Big\{\|(\mathbf{E}_{0},\mathbf{F}_{0})\|_{B^{\sigma}_{2,1}}
\nonumber\\&&
+\sqrt{\|\varrho\|_{\widetilde{L}^\infty_{T}(B^{\sigma}_{2,1})}}\Big(\frac{1}{\sqrt{\tau}}\|\mathbf{u}\|_{\widetilde{L}^2_{T}(B^{\sigma}_{2,1})}
+\frac{1}{\sqrt{\varepsilon}}\|\nabla\mathbf{F}\|_{\widetilde{L}^2_{T}(B^{\sigma-1}_{2,1})}\Big)\Big\}
\nonumber\\&\leq&C\|W_{0}\|_{B^{\sigma}_{2,1}}
+C\sqrt{\|W\|_{\widetilde{L}_{T}^{\infty}(B^{\sigma}_{2,1})}}\Big\{
\Big\|\Big(\sqrt{\tau}\varrho,\frac{1}{\sqrt{\tau}}\textbf{u},\sqrt{\tau\varepsilon}\mathbf{E}\Big)\Big\|_{\widetilde{L}_{T}^2(B^{\sigma}_{2,1})}
\nonumber\\&&+\Big\|\frac{\nabla\textbf{F}}{\sqrt{\varepsilon}}\Big\|_{\widetilde{L}^2_{T}(B^{\sigma-1}_{2,1})}\Big\}
\nonumber\\
\hspace{10mm}&\leq&C\|W_{0}\|_{B^{\sigma}_{2,1}}+C\sqrt{\delta_{1}}
\Big\{\Big\|\Big(\sqrt{\tau}\varrho,\frac{1}{\sqrt{\tau}}\textbf{u},\sqrt{\tau\varepsilon}\mathbf{E}\Big)\Big\|_{\widetilde{L}_{T}^2(B^{\sigma}_{2,1})}
+\Big\|\frac{\nabla\textbf{F}}{\sqrt{\varepsilon}}\Big\|_{\widetilde{L}^2_{T}(B^{\sigma-1}_{2,1})}\Big\},\label{R-E81}
\end{eqnarray}
where we used the \textit{a priori} assumption (\ref{R-E30}) in the
last step of (\ref{R-E81}).

Finally, we choose the positive constant $\delta_{1}$ such that
$$C\sqrt{\delta_{1}}<\min\Big\{\frac{K_{1}}{2},\frac{\sqrt{\mu_{2}}}{4},K_{2},\frac{K_{3}}{2}\Big\},$$
then the inequality (\ref{R-E31}) follows immediately.  This
finishes the proof of Proposition \ref{prop4.1} eventually.

\subsection{Non-relativistic limit}\label{sec:3.3}
In this section, we justify the non-relativistic limit of the system
(\ref{R-E1})-(\ref{R-E3}) with $\tau=1$.\\

\noindent\textit{\underline{Proof of Theorem \ref{thm1.3}.}} For any
fixed $T>0$, let
$(n^{\varepsilon},\mathbf{u}^{\varepsilon},\mathbf{E}^{\varepsilon},\mathbf{B}^{\varepsilon})$
be the global solution of (\ref{R-E1})-(\ref{R-E3}) given by Theorem
\ref{thm1.2}. It follows from the uniform energy estimate
(\ref{R-E11}) and Remark \ref{rem2.1} that
\begin{eqnarray}(n^{\varepsilon}-\bar{n}, \textbf{u}^{\varepsilon})\in L^\infty_{T}(B^{\sigma}_{2,1})\cap L^{2}_{T}(B^{\sigma}_{2,1}),\label{R-E82}\end{eqnarray}
\begin{eqnarray}\textbf{E}^{\varepsilon}\in L^\infty_{T}(B^{\sigma}_{2,1}),\ \ \sqrt{\varepsilon}\textbf{E}^{\varepsilon}\in L^{2}_{T}(B^{\sigma}_{2,1}),\label{R-E83}\end{eqnarray}
\begin{eqnarray}\textbf{B}^{\varepsilon}-\overline{\mathbf{B}}\in L^\infty_{T}(B^{\sigma}_{2,1}),\ \ \frac{\nabla\textbf{B}^{\epsilon}}{\sqrt{\varepsilon}}\in L^{2}_{T}(B^{\sigma-1}_{2,1}),\label{R-E84}\end{eqnarray}
uniformly in $\varepsilon$. Note that (\ref{R-E84}), we deduce
\begin{eqnarray}
\Big\{\int^{T}_{0}\|\nabla\textbf{B}^{\varepsilon}(t,\cdot)\|^2_{B^{\sigma-1}_{2,1}}dt\Big\}^{1/2}&=&\sqrt{\varepsilon}\Big\{\int^{T}_{0}\Big\|\frac{\nabla\textbf{B}^{\varepsilon}(t,\cdot)}{\sqrt{\varepsilon}}\Big\|^2_{B^{\sigma-1}_{2,1}}dt\Big\}^{1/2}
\nonumber\\&\leq& C\sqrt{\varepsilon}\rightarrow0, \ \ \ \mbox{as}\
\ \ \varepsilon\rightarrow0.\label{R-E85}
\end{eqnarray}
That is,
\begin{eqnarray}
\{\nabla\textbf{B}^{\varepsilon}\}\rightarrow \textbf{0}\ \ \ \
\mbox{strongly in}\ \ \ L^{2}_{T}(B^{\sigma-1}_{2,1}),\ \ \
\mbox{as}\ \ \ \varepsilon\rightarrow0.\label{R-E86}
\end{eqnarray}
Moreover, with the help of (\ref{R-E1}), we have
\begin{eqnarray}(n^{\varepsilon}_{t}, \textbf{u}^{\varepsilon}_{t})\in L^{2}_{T}(B^{\sigma-1}_{2,1}),\label{R-E87}\end{eqnarray}
\begin{eqnarray}
\sqrt{\varepsilon}\textbf{E}^{\varepsilon}_{t}\in
L^{2}_{T}(B^{\sigma-1}_{2,1}),\label{R-E88}
\end{eqnarray}
uniformly in $\varepsilon$.

According to (\ref{R-E82})-(\ref{R-E84}) and
(\ref{R-E87})-(\ref{R-E88}), it can be derived from Proposition
\ref{prop2.1} and Aubin-Lions compactness lemma in \cite{S} that
there exists some function $(n^{0},\textbf{u}^{0},\textbf{E}^{0})\in
\mathcal{C}([0,\infty), \bar{n}+B^{\sigma}_{2,1})\times
\mathcal{C}([0,\infty), B^{\sigma}_{2,1})\times
\mathcal{C}([0,\infty), B^{\sigma}_{2,1})$ such that the sequences
(up to subsequences) as $\varepsilon\rightarrow0$, it holds that
\begin{eqnarray}
\{n^{\varepsilon}\}\rightarrow n^{0}\ \ \ \ \mbox{strongly in}\ \ \
\mathcal{C}([0,T],(B^{\sigma-\delta}_{2,1})_{\mathrm{loc}}),\label{R-E89}
\end{eqnarray}
\begin{eqnarray}
\{\textbf{u}^{\varepsilon}\}\rightarrow \textbf{u}^{0}\ \ \ \
\mbox{strongly
 in}\ \ \ \mathcal{C}([0,T],(B^{\sigma-\delta}_{2,1})_{\mathrm{loc}}),\label{R-E90}
\end{eqnarray}
\begin{eqnarray}
\{\sqrt{\varepsilon}\textbf{E}^{\varepsilon}\}\rightarrow
\textbf{0}\ \ \ \ \mbox{strongly
 in}\ \ \ \mathcal{C}([0,T],(B^{\sigma-\delta}_{2,1})_{\mathrm{loc}}),\label{R-E91}
\end{eqnarray}
\begin{eqnarray}
\{\textbf{E}^{\varepsilon}\}\rightharpoonup \textbf{E}^{0}\ \ \ \
\mbox{weakly$^{\star}$
 in}\ \ \ L^\infty_{T}(B^{\sigma}_{2,1}),\label{R-E92}
\end{eqnarray}
\begin{eqnarray}
\{\textbf{B}^{\varepsilon}\}\rightharpoonup \overline{\mathbf{B}}\ \
\ \ \mbox{weakly$^{\star}$
 in}\ \ \ L^\infty_{T}(B^{\sigma}_{2,1}),\label{R-E93}
\end{eqnarray}
for any $T>0$ and $\delta\in(0,1)$. Thus, in the system
(\ref{R-E1})-(\ref{R-E3}), the uniform bounded properties
(\ref{R-E82})-(\ref{R-E84}) as well as the convergence properties
(\ref{R-E85}) and (\ref{R-E89})-(\ref{R-E93}) allow us to pass to
the limit $\varepsilon\rightarrow0$ in the sense of distributions,
which implies that $(n^{0},\textbf{u}^{0},\textbf{E}^{0})$ is a
global weak solution to the Euler-Poisson equations (\ref{R-E6})
satisfying (\ref{R-E12}). This completes the proof of Theorem
\ref{thm1.3}.

\subsection{Relaxation limit}\label{sec:3.4}
In this section, we prove the relaxation limit of
(\ref{R-E1})-(\ref{R-E3}) with
$\varepsilon=1$.  \\

\noindent\textit{\underline{Proof of Theorem \ref{thm1.4}.}} From
the scaled variable transform (\ref{R-E7}) and the uniform energy
estimate (\ref{R-E11}) in Theorem \ref{thm1.2}, it is shown that
$(n^{\tau},\textbf{u}^{\tau},\textbf{E}^{\tau},\textbf{B}^{\tau})$
is a unique global solution of the system (\ref{R-E8}) and
(\ref{R-E13}), furthermore, for any fixed $T>0$, we have
\begin{eqnarray}n^{\tau}-\bar{n}\in L^\infty_{T}(B^{\sigma}_{2,1})\cap L^{2}_{T}(B^{\sigma}_{2,1}),\label{R-E94}\end{eqnarray}
\begin{eqnarray}
\tau\textbf{u}^{\tau}\in L^\infty_{T}(B^{\sigma}_{2,1}),\ \ \
\textbf{u}^{\tau}\in L^2_{T}(B^{\sigma}_{2,1}),\label{R-E95}
\end{eqnarray}
\begin{eqnarray}\textbf{E}^{\tau}\in L^\infty_{T}(B^{\sigma}_{2,1})\cap L^{2}_{T}(B^{\sigma}_{2,1}),\label{R-E96}\end{eqnarray}
\begin{eqnarray}\textbf{B}^{\tau}-\overline{\mathbf{B}}\in L^\infty_{T}(B^{\sigma}_{2,1}),\ \ \ \frac{\nabla\textbf{B}^{\tau}}{\sqrt{\tau}}\in L^{2}_{T}(B^{\sigma-1}_{2,1}),\label{R-E97}\end{eqnarray}
uniformly in $\tau$. Similar to (\ref{R-E85}), it follows from
(\ref{R-E97}) that
\begin{eqnarray}
\{\nabla\textbf{B}^{\tau}\}\rightarrow \textbf{0}\ \ \ \
\mbox{strongly in}\ \ \ L^{2}_{T}(B^{\sigma-1}_{2,1}),\ \ \
\mbox{as}\ \ \ \tau\rightarrow0.\label{R-E98}
\end{eqnarray}
Moreover, from the equations (\ref{R-E8}), we conclude that
\begin{eqnarray}n^{\tau}_{t}\in L^{2}_{T}(B^{\sigma-1}_{2,1}),\label{R-E99}\end{eqnarray}
\begin{eqnarray}
\tau^2\textbf{u}^{\tau}_{t}\in
L^{2}_{T}(B^{\sigma-1}_{2,1}),\label{R-E100}
\end{eqnarray}
\begin{eqnarray}
\sqrt{\tau}\textbf{E}^{\tau}_{t}\in
L^{2}_{T}(B^{\sigma-1}_{2,1}),\label{R-E101}
\end{eqnarray}
uniformly in $\tau$.

Together with (\ref{R-E94})-(\ref{R-E97}) and
(\ref{R-E99})-(\ref{R-E101}), it follows from Proposition
\ref{prop2.1} and Aubin-Lions compactness lemma in \cite{S} that
there exists some function $(\mathcal{N},\mathcal{U},\mathcal{E})\in
\mathcal{C}([0,\infty), \bar{n}+B^{\sigma}_{2,1})\times
L^2([0,\infty), B^{\sigma}_{2,1})\times \mathcal{C}([0,\infty),
B^{\sigma}_{2,1})$ such that the sequences (up to subsequences) as
$\tau\rightarrow0$, it holds that
\begin{eqnarray}
\{n^{\tau}\}\rightarrow \mathcal{N}\ \ \ \ \mbox{strongly in}\ \ \
\mathcal{C}([0,T],(B^{\sigma-\delta}_{2,1})_{\mathrm{loc}}),\label{R-E102}
\end{eqnarray}
\begin{eqnarray}
\{\tau^2\textbf{u}^{\tau}\}\rightarrow \textbf{0}\ \ \ \
\mbox{strongly
 in}\ \ \ \mathcal{C}([0,T],(B^{\sigma-\delta}_{2,1})_{\mathrm{loc}}),\label{R-E103}
\end{eqnarray}
\begin{eqnarray}
\{\textbf{u}^{\tau}\}\rightharpoonup \mathcal{U}\ \ \ \ \mbox{weakly
 in}\ \ \ L^2_{T}(B^{\sigma}_{2,1}),\label{R-E104}
\end{eqnarray}
\begin{eqnarray}
\{\sqrt{\tau}\textbf{E}^{\tau}\}\rightarrow \textbf{0}\ \ \ \
\mbox{strongly
 in}\ \ \ \mathcal{C}([0,T],(B^{\sigma-\delta}_{2,1})_{\mathrm{loc}}),\label{R-E105}
\end{eqnarray}
\begin{eqnarray}
\{\textbf{E}^{\tau}\}\rightharpoonup \mathcal{E}\ \ \ \
\mbox{weakly$^{\star}$
 in}\ \ \ L^\infty_{T}(B^{\sigma}_{2,1}),\label{R-E106}
\end{eqnarray}
\begin{eqnarray}
\{\textbf{B}^{\tau}\}\rightharpoonup \overline{\mathbf{B}}\ \ \ \
\mbox{weakly$^{\star}$
 in}\ \ \ L^\infty_{T}(B^{\sigma}_{2,1}),\label{R-E107}
\end{eqnarray}
for any $T>0$ and $\delta\in(0,1)$. Thus, the uniform bounded
properties (\ref{R-E94})-(\ref{R-E97}) as well as the convergence
properties (\ref{R-E98}) and (\ref{R-E102})-(\ref{R-E107}) allow us
to pass to the limit $\tau\rightarrow0$ in the system (\ref{R-E8})
and (\ref{R-E13}) in the sense of distributions, which implies that
$(\mathcal{N},\mathcal{E})$ is a global weak solution to the
drift-diffusion equations (\ref{R-E9}) satisfying (\ref{R-E14}).
Hence, the proof of Theorem \ref{thm1.4} is complete.

\subsection{Combined non-relativistic and relaxation limits}\label{sec:3.5}
In the last section, we perform the combined relativistic and
relaxation limits of (\ref{R-E1})-(\ref{R-E3}).\\

\noindent\textit{\underline{Proof of Theorem \ref{thm1.5}.}}
Combined with the scaled variable transform (\ref{R-E7}) where the
superscript $\tau$ is replaced by $(\tau,\varepsilon)$ and the
uniform energy estimate (\ref{R-E11}) in Theorem \ref{thm1.2}, it is
obtained that
$(n^{(\tau,\varepsilon)},\textbf{u}^{(\tau,\varepsilon)},\textbf{E}^{(\tau,\varepsilon)},\textbf{B}^{(\tau,\varepsilon)})$
is a unique solution of the system (\ref{R-E10}) and (\ref{R-E13})
with the superscript $\tau$ replaced by $(\tau,\varepsilon)$,
furthermore, for any fixed $T>0$, we infer that
\begin{eqnarray}n^{(\tau,\varepsilon)}-\bar{n}\in L^\infty_{T}(B^{\sigma}_{2,1})\cap L^{2}_{T}(B^{\sigma}_{2,1}),\label{R-E108}\end{eqnarray}
\begin{eqnarray}
\tau\textbf{u}^{(\tau,\varepsilon)}\in
L^\infty_{T}(B^{\sigma}_{2,1}),\ \ \
\textbf{u}^{(\tau,\varepsilon)}\in
L^2_{T}(B^{\sigma}_{2,1}),\label{R-E109}
\end{eqnarray}
\begin{eqnarray}\textbf{E}^{(\tau,\varepsilon)}\in L^\infty_{T}(B^{\sigma}_{2,1}),\ \ \ \sqrt{\varepsilon}\textbf{E}^{(\tau,\varepsilon)}\in L^{2}_{T}(B^{\sigma}_{2,1}),\label{R-E110}\end{eqnarray}
\begin{eqnarray}\textbf{B}^{(\tau,\varepsilon)}-\overline{\mathbf{B}}\in L^\infty_{T}(B^{\sigma}_{2,1}),\ \ \ \frac{\nabla\textbf{B}^{(\tau,\varepsilon)}}{\sqrt{\tau\varepsilon}}\in L^{2}_{T}(B^{\sigma-1}_{2,1}),\label{R-E111}\end{eqnarray}
uniformly in $(\tau,\varepsilon)$. The relation (\ref{R-E111}) turns
out to yield
\begin{eqnarray}
\{\nabla\textbf{B}^{(\tau,\varepsilon)}\}\rightarrow \textbf{0}\ \ \
\ \mbox{strongly in}\ \ \ L^{2}_{T}(B^{\sigma-1}_{2,1}),\ \ \
\mbox{as}\ \ \tau,\varepsilon\rightarrow0.\label{R-E112}
\end{eqnarray}
Moreover, using the equations (\ref{R-E10}), we get
\begin{eqnarray}n^{(\tau,\varepsilon)}_{t}\in L^{2}_{T}(B^{\sigma-1}_{2,1}),\label{R-E113}\end{eqnarray}
\begin{eqnarray}
\tau^2\textbf{u}^{(\tau,\varepsilon)}_{t}\in
L^{2}_{T}(B^{\sigma-1}_{2,1}),\label{R-E114}
\end{eqnarray}
\begin{eqnarray}
\sqrt{\tau\varepsilon}\textbf{E}^{(\tau,\varepsilon)}_{t}\in
L^{2}_{T}(B^{\sigma-1}_{2,1}),\label{R-E115}
\end{eqnarray}
uniformly in $(\tau,\varepsilon)$.

As previously, it follows from the standard weak convergence methods
and compactness lemma in \cite{S} that there exists some function
$(\mathcal{N},\mathcal{U},\mathcal{E})\in \mathcal{C}([0,\infty),
\bar{n}+B^{\sigma}_{2,1})\times L^2([0,\infty),
B^{\sigma}_{2,1})\times \mathcal{C}([0,\infty), B^{\sigma}_{2,1})$
such that the sequences (up to subsequences) as
$\tau,\varepsilon\rightarrow0$, it holds that
\begin{eqnarray}
\{n^{(\tau,\varepsilon)}\}\rightarrow \mathcal{N}\ \ \ \
\mbox{strongly in}\ \ \
\mathcal{C}([0,T],(B^{\sigma-\delta}_{2,1})_{\mathrm{loc}}),\label{R-E116}
\end{eqnarray}
\begin{eqnarray}
\{\tau^2\textbf{u}^{(\tau,\varepsilon)}\}\rightarrow \textbf{0}\ \ \
\ \mbox{strongly
 in}\ \ \ \mathcal{C}([0,T],(B^{\sigma-\delta}_{2,1})_{\mathrm{loc}}),\label{R-E117}
\end{eqnarray}
\begin{eqnarray}
\{\textbf{u}^{(\tau,\varepsilon)}\}\rightharpoonup \mathcal{U}\ \ \
\ \mbox{weakly
 in}\ \ \ L^2_{T}(B^{\sigma}_{2,1}),\label{R-E118}
\end{eqnarray}
\begin{eqnarray}
\{\sqrt{\tau\varepsilon}\textbf{E}^{(\tau,\varepsilon)}\}\rightarrow
\textbf{0}\ \ \ \ \mbox{strongly
 in}\ \ \ \mathcal{C}([0,T],(B^{\sigma-\delta}_{2,1})_{\mathrm{loc}}),\label{R-E119}
\end{eqnarray}
\begin{eqnarray}
\{\textbf{E}^{(\tau,\varepsilon)}\}\rightharpoonup \mathcal{E}\ \ \
\ \mbox{weakly$^{\star}$
 in}\ \ \ L^\infty_{T}(B^{\sigma}_{2,1}),\label{R-E120}
\end{eqnarray}
\begin{eqnarray}
\{\textbf{B}^{(\tau,\varepsilon)}\}\rightharpoonup
\overline{\mathbf{B}}\ \ \ \ \mbox{weakly$^{\star}$
 in}\ \ \ L^\infty_{T}(B^{\sigma}_{2,1}),\label{R-E121}
\end{eqnarray}
for any $T>0$ and $\delta\in(0,1)$. Thus, in the system
(\ref{R-E10}) and (\ref{R-E13}), the uniform bounded properties
(\ref{R-E108})-(\ref{R-E111}) as well as the convergence properties
(\ref{R-E112}) and (\ref{R-E116})-(\ref{R-E121}) allow us to pass to
the limits $\tau,\varepsilon\rightarrow0$ in the sense of
distributions, which implies that $(\mathcal{N},\mathcal{E})$ is a
global weak solution to the drift-diffusion equations (\ref{R-E9})
satisfying (\ref{R-E15}). This concludes the proof of Theorem
\ref{thm1.5}.

\section*{Acknowledgments}
The research of Jiang Xu is partially supported by the NSFC
(11001127), China Postdoctoral Science Foundation (20110490134) and
NUAA Research Funding (NS2010204).


\begin{thebibliography}{10}
\bibitem{Abidi}
{\sc H. Abidi}, {\em Equation de Navier-Stokes avec densit\'{e} et
viscosit\'{e} variables dans l'espace critique}, Revista
Matem\'{a}tica Iberoamericana, 23(2007), pp.~537--586.

\bibitem{A}
{\sc G.~Al$\grave{1}$}, {\em Global existence of smooth solutions of
the $N$-dimensional Euler-Possion model}, SIAM J. Math. Anal., 35
(2003), pp.~389--422.

\bibitem {C} {\sc J.~Y. Chemin}, {\em Perfect Incompressible Fluids},
{Oxford Lecture Ser. Math. Appl}. 14, Oxford University Press, New
York, 1998 (in English).

\bibitem {C2} {\sc J.~Y. Chemin}, {\em Th\'{e}or\`{e}mes d'unicit\'{e} pour le syst\`{e}me de Navier-Stokes
tridimensionnel}, Journal d'Analyse Math\'{e}matique, 77 (1999),
pp.~25--50.

\bibitem {CG} {\sc J.~F. Coulombel and T.~Goudon}, {\em The strong relaxation
limit of the multidimensional isothermal Euler equations}, Trans.
Amer. Math. Soc., 359 (2007), pp.~637--648.


\bibitem {CJW} {\sc G.~Q.~Chen, J.~W.~Jermore and D.~H.~Wang}, {\em Compressible
Euler-Maxwell equations}, Transp. Theory, Statist. Phys., 29 (2000),
pp.~311-331.

\bibitem {D}
{\sc R. Danchin}, {\em Fourier Analysis Methods for PDE's}, (Lecture
Notes), 2005.


\bibitem{DM} {\sc P. Degond and P. A. Markowich}, {\em A steady-state
potential flow model for semiconductors}, Ann. Mat. Pura Appl., IV
(1993), pp.~87--98.

\bibitem {FX} {\sc D.~Y. Fang and J.~Xu}, {\em Existence and asymptotic behavior of
$C^1$ solutions to the multidimensional compressible Euler equations
with damping}, Nonlinear Anal., 70 (2009), pp. 244--261.

\bibitem {FXZ} {\sc D.~Y. Fang, J.~Xu, and T.~Zhang}, {\em Global
exponential stability of classical solutions to the hydrodynamic
model for semiconductors}, Mathematical Models and Methods in
Applied Sciences, 17 (2007), pp.~1507--1530.

\bibitem {G} {\sc I. Gamba}, {\em Stationary transonic solutions of a
one-dimensional hydrodynamic model for semiconductor}, Comm. Partial
Differential Equations, 17 (1992), pp.~553--577.

\bibitem{Gu} {\sc Y. Guo}, {\em Smooth irrotational flows in the large
to the Euler-Poisson system in $\mathbf{R}^{3+1}$}, Commun. Math.
Phys., 195 (1998),  pp.~249--265.

\bibitem{HMW} {\sc L. Hsiao, P. A. Markowich and S. Wang}, {\em The asymptotic behavior of globally smooth solutions of the
multidimensional isentropic hydrodynamic model for semiconductors},
J. Differential Equations, 192 (2003), pp.~111--133.

\bibitem{HJZ}
{\sc L. Hsiao, S. Jiang and P. Zhang}, {\em Global existence and
exponential stability of smooth solutions to a full hydrodynamic
model to semiconductors}, Monatshefte f$\ddot{u}$r Mathematik, 136
(2002), pp.~269--285.

\bibitem {HZ} {\sc L. Hsiao and K. Zhang}, {\em The relaxation of the
hydrodynamic model for semiconductors to the drift-diffusion
equations}, J. Differential Equations, 165 (2000), pp.~315--354.

\bibitem {J} {\sc J.~W.~Jerome}, {\em The Cauchy problem for compressible
hydrodynamic-Maxwell systems: A local theory for smooth solutions},
Differential Integral Equations, 16 (2003), pp.~1345--1368.

\bibitem{K} {\sc T. Kato}, {\em The Cauchy problem for quasi-linear
symmetric hyperbolic systems}, Arch. Ration. Mech. Anal., 58 (1975),
pp.~181--205.

\bibitem{KY} {\sc S. Kawashima and W.-A Yong}, {\em Dissipative Structure and
entropy for hyperbolic systems of Balance laws}, Arch. Ration. Mech.
Anal., 174 (2004), pp.~345--364.

\bibitem{LMM} {\sc H.~L. Li, P. Markowich and M. Mei},  {\em Asymptotic behavior of subsonic entropy solutions of the isentropic Euler-Poisson
equations}, Quart. Appl. Math., 60 (2002), pp.~773--796.

\bibitem{M} {\sc A. Majda}, {\em Compressible Fluid Flow and
Conservation laws in Several Space Variables}, Springer-Verlag:
Berlin/New York, 1984.

\bibitem{MN}
{\sc P. Marcati and R. Natalini}, {\em Weak solutions to a
hydrodynamic model for semiconductors and relaxation to the
drift-diffusion equations}, Arch. Ration. Mech. Anal., 129 (1995),
pp.~129--145.

\bibitem{MRS}
{\sc P. A. Markowich, C. Ringhofer and C. Schmeiser}, {\em
Semiconductor Equations}, Springer-Verlag: Vienna, 1990.

\bibitem{PW1}
{\sc Y.~J.~Peng and S.~Wang}, {\em Convergence of compressible
Euler-Maxwell equations to compressible Euler-Poisson equations},
Chin. Ann. Math., 28 (2007), pp.~583--602.

\bibitem{PW2} {\sc Y.~J.~Peng and S.~Wang}, {\em Convergence of compressible
Euler-Maxwell equations to compressible Euler equations}, Comm.
Partial Differential Equations, 33 (2008), pp.~349--376.

\bibitem{PW3} {\sc Y.~J.~Peng and S.~Wang}, {\em Rigorous derivation of
incompressible e-MHD equations from compressible Euler-Maxwell
equations}, SIAM J. Math. Anal., 40 (2008), pp.~540--565.

\bibitem{S} {\sc J.~Simon}, {\em Compact sets in the space
$L^{p}(0,T;B)$}, Ann. Math. Pura Appl., 146 (1987), pp.~65--96.

\bibitem{X}
{\sc J. Xu}, {\em Relaxation-time limit in the isothermal
hydrodynamic model for semiconductors}, SIAM J. Math. Anal., 40
(2009), pp.~1979--1991.

\bibitem{XY} {\sc J. Xu and W.-A. Yong}, {\em Relaxation-time limits of non-isentropic hydrodynamic models for semiconductors},
J. Differential Equations, 247, (2009), pp.~1777--1795.

\bibitem{Y} {\sc W.~A. Yong}, {\em Entropy and global existence for hyperbolic
balance laws}, Arch. Ration. Mech. Anal., 172 (2004),
pp.~247--266.


\end{thebibliography}
\end{document}